\newtheorem{thm}{Theorem}[section]
\newtheorem{lem}[thm]{{Lemma}}
\newtheorem{cor}[thm]{{Corollary}}
\newtheorem{rem}[thm]{{Remark}}
\newtheorem{example}[thm]{{Example}}
\newtheorem{prop}[thm]{{Proposition}}
\theoremstyle{definition}
\newtheorem{defn}[thm]{{Definition}}
\newtheorem{ex}[thm]{{\bf Example}}
\numberwithin{equation}{section}
\newcommand{\sbm}[1]{\left[\begin{smallmatrix} #1
                \end{smallmatrix}\right]}
\newcommand{\C}{{\mathbb C}}
\newcommand{\D}{{\mathbb D}}
\newcommand{\Z}{{\mathbb Z}}
\newcommand{\bzeta}{{\overline{\zeta}}}
\newcommand{\lam}{\lambda}
\newcommand{\bev}{\mathbf{ev}}
\newcommand{\mat}[1]{\begin{bmatrix} #1 \end{bmatrix}}
\newcommand{\ov}[1]{{\overline{#1}}}
\newcommand{\inn}[2]{\ensuremath{\langle #1,#2 \rangle}}
\newcommand{\tu}[1]{\textup{#1}}
\newcommand{\Ran}{\operatorname{Ran}}
\newcommand{\Ker}{\operatorname{Ker}}
\newcommand{\kr}{\operatorname{Ker}}
\newcommand{\wtil}{\widetilde}
\newcommand{\half}{\frac{1}{2}}
\newcommand{\wtilF}{\widetilde{F}}
\newcommand{\cC}{{\mathcal C}}
\newcommand{\cE}{{\mathcal E}}\newcommand{\cF}{{\mathcal F}}
\newcommand{\cH}{{\mathcal H}}
\newcommand{\cK}{{\mathcal K}}\newcommand{\cL}{{\mathcal L}}
\newcommand{\cM}{{\mathcal M}}\newcommand{\cN}{{\mathcal N}}
\newcommand{\cO}{{\mathcal O}}
\newcommand{\cS}{{\mathcal S}}
\newcommand{\cU}{{\mathcal U}}
\newcommand{\cX}{{\mathcal X}}
\newcommand{\cY}{{\mathcal Y}}
\newcommand{\ba}{{\mathbf a}}
\newcommand{\bn}{{\bf n}}
\newcommand{\B}{{\mathbb B}}
\newcommand{\bx}{{\mathbf x}}
\newcommand {\bT}{\mathbf T}
\newcommand{\free}{{\mathbb F}^{+}_{d}}
\newcommand{\BC}{{\mathbb C}}
\newcommand{\bJ}{{\mathbf J}}
\newcommand{\bU}{{\mathbf U}}
\newcommand{\fA}{{\mathfrak A}}
\newcommand{\fT}{{\mathfrak T}}
\numberwithin{equation}{section}
\begin{document}
\title[Interpolation in multivariable de Branges-Rovnyak spaces]{Interpolation in multivariable de Branges-Rovnyak spaces}

\author{Joseph A. Ball}
\address{J.A. Ball, Department of Mathematics,
Virginia Tech, Blacksburg, VA 24061-0123, USA}
\email{ball@math.vt.edu}

\author{Vladimir Bolotnikov}
\address{V. Bolotnikov, Department of Mathematics,
William \&  Mary,
Williamsburg VA 23187-8795, USA}
\email{vladi@math.wm.edu}

\author{Sanne ter Horst}
\address{S. ter Horst, Department of Mathematics, Research Focus Area:\
Pure and Applied Analytics, North-West University, Potchefstroom, 2531
South Africa and DSI-NRF Centre of Excellence in Mathematical and Statistical Sciences (CoE-MaSS)}
\email{Sanne.TerHorst@nwu.ac.za}

\thanks{This work is based on the research supported in part by the National Research Foundation of South Africa (Grant Numbers 90670 and 118513) and Simons Foundation (Grant number 524539).}

\begin{abstract}
We study a general metric constrained interpolation problem in a de Branges-Rovnyak space $\cH(K_S)$ associated with a contractive multiplier $S$ between two Fock spaces along with its commutative counterpart, a de Branges-Rovnyak space associated with a Schur multiplier on the Drury-Arveson space of the unit ball of $\mathbb C^n$.
\end{abstract}

\subjclass{46E22, 47A57, 30E05}
\keywords{de Branges-Rovnyak space, Fock space, Drury-Arveson space, operator-argument interpolation, linear fractional transformations}

\maketitle
\section{Introduction}\label{S:Intro}

The functions analytic on the open unit disk $\mathbb D$ and bounded by one in modulus
({\em Schur-class functions}) can be alternatively characterized as contractive multipliers
of the Hardy space $H^2$:\ a function $S$ belongs to the Schur class $\mathcal S$ if and only if the
multiplication operator $M_S: \; f\mapsto Sf$ is a contraction on $H^2$. The latter means that
any $S\in\mathcal S$ gives rise to a positive kernel
$$
K_S(\lambda,\omega)=\frac{1-S(\lambda)\overline{S(\eta)}}{1-\lambda\overline{\eta}}
$$
and subsequently, to the reproducing kernel Hilbert space $\cH(S):=\mathcal H(K_S)$, the {\em de Branges-Rovnyak space}
associated with $S$. De Branges-Rovnyak spaces play a prominent role in operator model theory and
Hilbert space approaches to $H^\infty$-interpolation. Interpolation theory
in de Branges-Rovnyak spaces themselves has been initiated in \cite{bbt2}.

To start let us recall the
Nevanlinna-Pick problem in this setting:\ {\em given $S_0\in\mathcal S$, $n$ distinct points
$\lambda_1,\ldots,\lambda_k\in\mathbb D$ and target values $x_1,\ldots,x_k\in\C$ find all
\begin{equation}
f\in\mathcal H(S_0) \; \; \mbox{such that}\; \; f(\lambda_k)=x_k\; \; \mbox{for} \; \; k=1,\ldots,n.
\label{np}
\end{equation}}
The problem can be solved as follows. Let
$$
P=\left[\frac{1-S_0(\lambda_i)\overline{S_0(\lambda_j)}}{1-\lambda_i\overline{\lambda}_j}\right]_{i,j=1}^n,\quad
T=\begin{bmatrix}\overline{\lambda}_1 & & 0 \\ &\ddots & \\ 0 & & \overline{\lambda}_n\end{bmatrix},\quad
{\bf x}=\begin{bmatrix}x_1\\ \vdots  \\ x_n\end{bmatrix},
$$
\begin{equation}
E=\begin{bmatrix}1& \ldots & 1\end{bmatrix},
\quad N=\begin{bmatrix}\overline{S_0(\lambda_1)} & \ldots & \overline{S_0(\lambda_n)}\end{bmatrix}.
\label{data}
\end{equation}
Since the kernel $K_{S_0}$ is positive on $\mathbb D\times\mathbb D$, the matrix $P$ is positive semidefinite.
In fact, it is invertible unless $S_0$ is a Blaschke product of degree less than $n$. In the latter case,
the problem \eqref{np} has a solution (which necessarily is unique) if and only if ${\bf x}$ is in the range of $P$.

\smallskip

If $P$ is invertible, we can define the $2\times 2$ matrix-function
$\Theta=\sbm{\theta_{11} & \theta_{12} \\ \theta_{21} & \theta_{22}}$ by the formula
\begin{equation}
\Theta(\lambda)=I_2+(\lambda-1)\begin{bmatrix} E \\ N\end{bmatrix}(I_n-\lambda T)^{-1}P^{-1}(I_n-T^*)^{-1}
\begin{bmatrix} E^* & -N^*\end{bmatrix},
\label{th}
\end{equation}
which is $\sbm{1 & 0 \\ 0 & -1}$-inner in $\mathbb D$. Furthermore, since $S_0\in\cS$, the function
$$
\mathcal E_0=\frac{\theta_{22}S_0-\theta_{12}}{\theta_{11}-\theta_{21}S_0}
$$
belongs to the Schur class. Then all solutions $f$ to the problem
\eqref{np} are parametrized by the formula
\begin{equation}
f=f_0+(\theta_{11}-\theta_{21}S_0)h,\qquad h\in\mathcal H(\mathcal E_0)
\label{f}
\end{equation}
where
\begin{equation}
f_0(\lambda)=(E-S_0(\lambda)N)(I_n-\lambda T)^{-1}P^{-1}{\bf x}
\label{f0}
\end{equation}
and $h$ is a free parameter from the de Branges-Rovnyak space associated with $\mathcal E_0\in\cS$.
Moreover, the representation \eqref{f} turns out to be orthogonal in the metric of $\mathcal H(S_0)$
and in addition,
$$
\|f_0\|_{\mathcal H(S_0)}^2={\bf x}^*P^{-1}{\bf x}\quad\mbox{and}\quad
\|(\theta_{11}-\theta_{21}S_0)h\|_{\mathcal H(\mathcal S_0)}=\|h\|_{\mathcal H(\mathcal E_0)}
$$
for all $h\in\mathcal H(\mathcal E_0)$. Therefore, for $f$ of the form \eqref{f}, we have
$$
\|f\|^2_{\mathcal H(S_0)}={\bf x}^*P^{-1}{\bf x}+\|h\|_{\mathcal H(\mathcal E_0)}^2
$$
which allows to solve the norm-constrained version of the problem \eqref{np} with no extra effort.
We refer to \cite{bbt2} for the proof given there in the context of a more general
{\em Operator Argument interpolation Problem} ${\bf OAP}$ with the interpolation condition
given in terms of left-tangential evaluation calculus or equivalently, in terms of
an observability operator associated with an output stable pair. In \cite{bbt3}, similar results were shown to be true
in the context of a vector-valued de Branges-Rovnyak space associated with an operator-valued Schur-class function;
this case is briefly recalled in Section \ref{S:SingleVar} below.

The main goal of the present paper is to extend some results from \cite{bbt3} to two multivariable settings:\ a free
non-commutative setting, where we consider the de Branges-Rovnyak space associated with a contractive multiplier between
two Fock spaces, discussed in Section \ref{S:FockSpace}, and the standard commutative multivariable setting dealing with the
de Branges-Rovnyak space associated with a contractive multiplier between two Drury-Arveson spaces, discussed in
Section \ref{S:DruryArveson}. In each setting we consider the ${\bf OAP}$ with the interpolation condition given in terms of
an appropriately defined left-tangential evaluation calculus, and get a parametrization of all solutions by formulas similar to \eqref{f}.
Along the way we make use of and clarify some results from \cite{bbkats} concerning the bi-contractivity of certain 
indefinite-metric Schur-class multipliers used to generate the linear-fractional maps used for the afore-mentioned parametrizations.
This latter result appears to be new even for the definite case (thereby correcting a misconception that has appeared
in the literature), as explained in Appendix B: {\em the Schur-multiplier class is
invariant under the conjugation map} $S(z) \mapsto S^\sharp(z):= S(\overline{z})^*$ (Fock-space noncommutative setting) and
$S(\lam) \mapsto S^\sharp(\lam):= S(\overline{\lam})^*$ (Drury-Arveson-space commutative setting).

In the final section, Section \ref{S:OTprelim}, we consider the generalized de Branges-Rovnyak space $\mathcal H(T)$ associated
with an arbitrary contraction operator
$T$ (rather than a contractive multiplication operator) between two Hilbert spaces along with an interpolation problem
\eqref{dBR-Douglas3} analogous to the ${\bf OAP}$. At this level of generality, the general solution is still represented
as the orthogonal sum
of a particular minimal-norm solution and a general solution of the homogeneous problem. All solutions of the homogeneous problem,
in turn, form a subspace of $\mathcal H(T)$, which in the context of the problem \eqref{np} and in the multivariable problems
considered in Sections \ref{S:FockSpace} and \ref{S:DruryArveson}, admit more detailed Beurling-type representations.

We conclude this section with some words on notations and terminology that are used at various places in the paper. All Hilbert spaces appearing in this paper are assumed to be separable. By $\cL(\cU,\cY)$ we denote the space of bounded linear operators between Hilbert spaces $\cU$ and $\cY$, abbreviated to $\cL(\cY)$ in case $\cU=\cY$. If $\cX$ is a Hilbert  space and $G$ is a selfadjoint operator on $\cX$, we use the notation $(\cX, G)$ to denote the indefinite inner product space, or {\em Kre\u{\i}n space}, $\cX_G$ with the indefinite inner product induced by $G$:
$$
       [x, y ]_{G}: = \langle Gx, y \rangle_{\cX}.
$$
In this paper we will be primarily interested in the case that the indefinite inner product is induced by a {\em signature operator}, which
is an invertible operator $J\in \cL(\cX)$ with the property that $J=J^{-1}=J^*$.

Given two signature operators $J_1\in \cL(\cX_1)$ and $J_2\in \cL(\cX_2)$, an operator $W\in\cL(\cX_1,\cX_2)$ is called a
{\em $(J_1,J_2)$-bi-contraction} in case
\begin{equation}\label{bicon}
W^* J_2 W \preceq J_1 \quad\text{and}\quad W J_1 W^* \preceq J_2.
\end{equation}
In case only the first (second) inequality holds we say that $W$ is a {\em $(J_1,J_2)$-contraction} ({\em $(J_1,J_2)$-$*$-contraction}). Moreover, if the first (second) relation in \eqref{bicon} holds with equality then we say that $W$ is a $(J_1,J_2)$-isometry ($(J_1,J_2)$-coisometry) and if both relations hold with equality then $W$ is called $(J_1,J_2)$-unitary. Whenever $J_1=J_2$ we will simply write $J_1$-bi-contraction, $J_1$-isometry, etc. More details on Kre\u{\i}n spaces and a useful Kre\u{\i}n space lemma will be given in Appendix \ref{S:Krein}.

\section{The classical vector-valued case}\label{S:SingleVar}

In this section we recall basic results concerning the ${\bf OAP}$ in vector-valued de Branges-Rovnyak spaces that we wish to extend to the multivariable setting. To fix notation, we denote by $H^2_{\cY}$ the Hardy space of analytic $\cY$-valued functions on $\D$ with square-summable sequences of Taylor coefficients
$$
H^2_{\cY}=\bigg\{f(\lambda)=\sum_{k=0}^\infty f_k\lambda^k: \, \|f\|^2_{H^2_{\cY}}:=\sum_{k=0}^\infty \|f_k\|^2_\cY<\infty\bigg\}
$$
which  turns out to be the reproducing kernel Hilbert space $\cH_\cY(k)$ with reproducing Szeg\H{o} kernel
$$
k_\tu{Sz}(\lambda,\eta)=(1-\lambda\bar{\eta})^{-1},
$$
where we follow the convention that for any scalar positive kernel $k$ and Hilbert space $\cY$ we set $\cH_\cY(k):=\cH(k I_\cY)$.

We next denote by $\cS(\cU,\cY)$ the {\em Schur class} of analytic functions on $\D$ whose values are contractive operators in
$\cL(\cU, \cY)$ and which are characterized, in particular, as
contractive multipliers from $H^2_{\cU}$ to $H^2_{\cY}$: a function $S: \, \D\to \cL(\cU,\cY)$ belongs to $\cS(\cU,\cY)$
if and only if the associated multiplication operator
$$
M_S: \, f\to Sf
$$
is a contraction from $H^2_{\cU}$ to $H^2_{\cY}$. The latter property translates to the {\em de Branges-Rovnyak kernel}
$$
K_S(\lam, \eta)=\frac{I_\cY-S(\lam)S(\eta)^*}{1-\lambda\bar{\eta}}
$$
being a {\em positive kernel} on $\D$, i.e., $K_S$ has the property that
$$
 \sum_{i,j = 1}^N \langle K_S(\lam_i, \lam_j)y_j, y_i \rangle_\cY \ge 0
 $$
for all $\lam_1, \dots, \lam_N \in {\mathbb D}$, $y_1, \dots, y_N \in \cY$ and $N\ge 1$.
This positive kernel in turn gives rise to a reproducing kernel Hilbert space $\cH(S):=\cH(K_S)$,
the de Branges-Rovnyak space associated with $S$. Alternatively, $\cH(S)$ can be defined as
the range space $\Ran (I-M_SM_S^*)^\half$ with the lifted norm
$$
\|(I-M_SM_S^*)^\half f\|_{\cH(S)}=\|(I-\pi)f\|_{H^2_\cY},
$$
where $\pi$ is the orthogonal projection onto $\kr (I-M_SM_S^*)^\half$.

\smallskip

Let us say that a pair $(E,T)$ with $E\in\cL(\cY,\cX)$ and $T\in\cL(\cX)$ is {\em output stable} if the observability operator
$$
\cO_{E,T}: \; x\mapsto E(I-\lambda T)^{-1}x=\sum_{k=0}^\infty \lambda^k ET^kx
$$
maps $\cX$ into $H^2_\cY$ and is bounded. Then a standard inner-product computation gives the formula for the adjoint operator
$$
\cO_{E,T}^*f=\sum_{k=0}^\infty T^{* k} E^{*} f_{k}\quad \text{if} \quad f(\lambda) =
    \sum_{k=0}^\infty f_{k} \lambda^{k}\in H^2_{\cY}
$$
where the weak convergence of the operator series is guaranteed by the output-stability of the pair $(E,T)$.
Thereby, any output stable pair $(E,T)$ gives rise to the left-tangential evaluation
\begin{equation}\label{eval}
(E^{*} f)^{\wedge L}(T^{*}) := \cO_{E,T}^*f=\sum_{k=0}^\infty T^{* k} E^{*} f_{k}\quad \text{if} \quad f(\lambda) =
\sum_{k=0}^\infty f_{k} \lambda^{k}\in H^2_{\cY},
\end{equation}
which clearly makes sense for functions from the de Branges-Rovnyak space $\cH(S)\subset H^2_{\cY}$ and extends to operator valued Schur-class functions $S\in\cS(\cU,\cY)$ via
$$
(E^{*} S)^{\wedge L}(T^{*}) := \cO_{E,T}^*M_S |_{\cU}=\sum_{k=0}^\infty T^{* k} E^{*} S_{k}\quad \text{if} \quad S(\lambda) =
\sum_{k=0}^\infty S_{k} \lambda^{k}.
$$
The restriction to $\cU$ in the above formula is to be understood as the restriction to the constant $\cU$-valued functions in $H^2_\cU$.

Given a Schur-class function $S_0$, the  {\em Operator Argument interpolation Problem in $\cH(S_0)$}
considered in \cite{bbt2,bbt3} is formulated as follows.

\medskip
\noindent
${\bf OAP}_{\cH(S_0)}$:
{\em  Given $S_0 \in \cS(\cU, \cY)$,  an output stable pair $(E,T)\in\cL(\cX,\cY)\times \cL(\cX)$, and
a vector ${\bf x}\in\cX$, the problem is:}
    \begin{equation}\label{evaldbra}
\text{find all } f\in\cH(S_0)\quad\mbox{such that}\quad (E^{*} f)^{\wedge L}(T^{*})={\bf x}.
    \end{equation}
If $\dim \cX=n$, $\dim\cU=\dim\cY=1$ and $T$, $E$ and ${\bf x}$ are chosen as in \eqref{data}, then we have from \eqref{eval}
$$
(E^{*} f)^{\wedge L}(T^{*}) = \sum_{k=0}^\infty \sbm{\lambda_1^k \\ \vdots \\ \lambda_n^k}f_k=\sbm{f(\lambda_1)\\ \vdots \\ f(\lambda_n)}
$$
from which we see that condition \eqref{evaldbra} collapses to the $n$ conditions in \eqref{np}.

The general case is handled as follows. We introduce $N\in \cL(\cX,\cU)$ and
$P\in\cL(\cX)$ by
\begin{equation}
\label{NdefIntro}
N^*:= (E^{*} S_0)^{\wedge L}(T^{*}),\quad P:=\cO_{E,T}^*\cO_{E,T}-\cO_{N,T}^*\cO_{N,T}.
\end{equation}
It is not hard to show that since $S_0$ is a Schur-class function, the pair $(N,T)$ is output stable and the associated observability
operator $\cO_{N,T}: \; \cX\to H^2_{\cU}$ satisfies
$$
\cO_{E,T}^*M_{S_0}=\cO_{N,T}^*.
$$
Therefore, the operator $P$ in \eqref{NdefIntro} is bounded and positive semidefinite:
$$
P=\cO_{E,T}^*\cO_{E,T}-\cO_{N,T}^*\cO_{N,T}=\cO_{E,T}^*(I-M_{S_0}M_{S_0}^*)\cO_{E,T}\succeq 0.
$$
It was shown in \cite{bbt3} that the problem \eqref{evaldbra} has a solution if and only if ${\bf x}\in\Ran P^{\half}$.
If $P\succ 0$ is strictly positive definite (i.e., positive semidefinite and boundedly invertible), one can construct
an operator-valued function
$$
\Theta(\lambda)=\begin{bmatrix}\Theta_{11}(\lambda) & \Theta_{12}(\lambda)\\ \Theta_{21}(\lambda) & \Theta_{22}(\lambda)\end{bmatrix}: \;
\begin{bmatrix}\cY \\ \cU\end{bmatrix}\to \begin{bmatrix}\cY \\ \cU\end{bmatrix}\quad (\lambda\in\mathbb D)
$$
satisfying the identity
\begin{equation}
\frac{J_{_{\cY,\cU}} - \Theta(\lambda)J_{_{\cY,\cU}}\Theta(\eta)^{*}}{1 -\lambda\overline{\eta}}
=\begin{bmatrix}E \\ N\end{bmatrix}(I - \lambda T)^{-1} P^{-1}(I -
\overline{\eta}T^{*})^{-1}\begin{bmatrix}E^* & N^*\end{bmatrix}
\label{iden}
\end{equation}
for all $\lambda,\eta\in\D$, where $J_{_{\cY,\cU}}$ denotes the signature operator
\begin{equation}\label{J-inter}
J_{_{\cY,\cU}} := \mat{ I_\cY & 0 \\ 0 & -I_\cU}.
\end{equation}
The function $\Theta$ is uniquely determined by \eqref{iden} up to multiplication by a constant $J_{\cY,\cU}$-unitary factor on the right. If $1\not\in\sigma(T^*)$, we can use
formula \eqref{th} (with $I_\cX$ instead of $I_n$). Otherwise, $\Theta$ can be constructed via solving a certain Cholesky
factorization problem as will be explained in the multivariable settings to come. It follows from \eqref{iden} that
$$
J_{_{\cY,\cU}} - \Theta(\lambda)J_{_{\cY,\cU}}\Theta(\lambda)^{*}\succeq 0, \quad \lambda\in\mathbb D
$$
while a similar factorization as in \eqref{iden} for $\frac{J_{_{\cY,\cU}} - \Theta(\lambda)^*J_{_{\cY,\cU}}\Theta(\eta)}{1 -\lambda\overline{\eta}}$ implies that
$$
J_{_{\cY,\cU}} - \Theta(\lambda)^*J_{_{\cY,\cU}}\Theta(\lambda)\succeq 0, \quad \lambda\in\mathbb D.
$$
Hence, we can conclude that $\Theta$ is $(J_{_{\cY,\cU}},J_{_{\cY,\cU}})$-bi-contractive in $\mathbb D$. Therefore, the formula
$$
\cE\mapsto {\fT}_\Theta[\cE]:= (\Theta_{11}\cE+\Theta_{12})(\Theta_{21}\cE+\Theta_{22})^{-1}
$$
makes sense for all $\cE\in\mathcal S(\cU,\cY)$ and defines a linear fractional map of $\mathcal S(\cU,\cY)$ into
itself. Its range coincides with the solution set of a certain Schur class ${\bf OAP}$.

\smallskip

The following result has been established via various methods, cf., \cite{FFGK98,BB18,FtHK18}, including several where
the  de Branges-Rovnyak spaces played a prominent role.  Let us mention that \cite{BB18} as well as \cite{BR07}
consider the more general Bitangential Operator-Argument interpolation Problem (with norm constraint),
where a tangential operator-argument interpolation condition is considered both from the left and the right simultaneously.

\begin{thm}  \label{T:1.0}
Given output stable pairs $(E,T)$ and $(N,T)$ with $E\in\cL(\cX,\cY)$ and $N\in\cL(\cX,\cU)$, there exists a function
\begin{equation}\label{oapschur}
S\in\cS(\cU,\cY)\quad\mbox{such that}\quad
(E^{*} S)^{\wedge L}(T^{*})=N^*
    \end{equation}
if and only if $P:=\cO_{E,T}^*\cO_{E,T}-\cO_{N,T}^*\cO_{N,T}\succeq 0$. Furthermore, if $P\succ 0$, then the solution set
of the problem \eqref{oapschur} is parametrized by the formula
\begin{equation}\label{edescr}
S=  \fT_\Theta[\cE]:= (\Theta_{11}\cE+\Theta_{12})(\Theta_{21}\cE+\Theta_{22})^{-1}
\end{equation}
with free parameter $\cE\in \cS(\cU,\cY)$, where $\Theta$
is a $(J_{_{\cY,\cU}},J_{_{\cY,\cU}})$-contractive function subject to the identity \eqref{iden}.
\end{thm}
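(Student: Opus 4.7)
The plan is to split the argument into three parts: (i) necessity of the positivity condition $P \succeq 0$, (ii) verification that the formula \eqref{edescr} produces solutions for every free parameter $\cE\in\cS(\cU,\cY)$, and (iii) completeness of the parametrization.

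For step (i), suppose $S\in\cS(\cU,\cY)$ satisfies $(E^{*}S)^{\wedge L}(T^{*})=N^{*}$. By definition this says $\cO_{E,T}^{*}M_{S}|_{\cU}=\cO_{N,T}^{*}|_{\cU}$ on constants, but the intertwining identities $\cO_{E,T}^{*}M_{z}=T^{*}\cO_{E,T}^{*}$ (and the analogous one for $N$) promote the equality to the operator identity $\cO_{E,T}^{*}M_{S}=\cO_{N,T}^{*}$ on all of $H^{2}_{\cU}$, since the constant functions are $M_{z}$-cyclic in $H^{2}_{\cU}$. Contractivity of $M_{S}$ then gives
$$P=\cO_{E,T}^{*}\cO_{E,T}-\cO_{N,T}^{*}\cO_{N,T}=\cO_{E,T}^{*}(I-M_{S}M_{S}^{*})\cO_{E,T}\succeq 0.$$

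For step (ii), assume $P\succ 0$ and construct $\Theta$ subject to \eqref{iden}; when $1\notin\sigma(T^{*})$ the displayed formula \eqref{th} (with $I_{\cX}$ in place of $I_{n}$) works directly, and otherwise $\Theta$ is obtained from a Cholesky-type factorization of the right-hand kernel in \eqref{iden}. The identity \eqref{iden} exhibits the left de Branges-Rovnyak kernel of $\Theta$ as positive, so $\Theta$ is $J_{_{\cY,\cU}}$-contractive on $\D$, and the companion identity noted in the text yields $J_{_{\cY,\cU}}$-bi-contractivity. Given any $\cE\in\cS(\cU,\cY)$, bi-contractivity of $\Theta$ (together with $\|\cE\|\le 1$) implies $\Theta_{21}\cE+\Theta_{22}$ is pointwise invertible with inverse a bounded multiplier on $H^{2}_{\cU}$, so $S:=\fT_{\Theta}[\cE]$ is well-defined, and a direct kernel computation (the standard linear-fractional argument using the two positive kernel identities for $\Theta$) shows $S\in\cS(\cU,\cY)$. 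The interpolation condition is then checked by substituting $S=\fT_{\Theta}[\cE]$ into $\cO_{E,T}^{*}M_{S}$ and invoking \eqref{iden} evaluated against vectors in $\cX$, so that the $E$/$N$ structure collapses exactly to $N^{*}$.

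For step (iii), let $S\in\cS(\cU,\cY)$ be an arbitrary solution and define, pointwise in $\D$,
$$\begin{bmatrix} a(\lam) \\ b(\lam)\end{bmatrix}:=\Theta(\lam)^{-1}\begin{bmatrix} S(\lam) \\ I_{\cU}\end{bmatrix},\qquad \cE(\lam):=a(\lam)b(\lam)^{-1},$$
so that $S=\fT_{\Theta}[\cE]$ by construction. The task is to show that $b$ is pointwise invertible with $b^{-1}$ a bounded multiplier and that $\cE\in\cS(\cU,\cY)$. This is the main obstacle and is the step where the interpolation condition on $S$ is used crucially: substituting the relation $\sbm{ S \\ I_{\cU}}=\Theta\sbm{\cE\\ I_{\cU}}b$ into the identity \eqref{iden} (sandwiched between $\sbm{I & -S(\eta)^{*}}$ on the right and its adjoint on the left) combines with $\cO_{E,T}^{*}M_{S}=\cO_{N,T}^{*}$ to exhibit
$$\frac{I_{\cU}-\cE(\lam)\cE(\eta)^{*}}{1-\lam\bar\eta}$$
as a positive kernel on $\D$. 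Here both the positivity of $P$ (used to form $P^{-1}$) and the full $J_{_{\cY,\cU}}$-bi-contractivity of $\Theta$ (not merely its contractivity) are essential to control $b^{-1}$. Once $\cE\in\cS(\cU,\cY)$ is produced, the identity $S=\fT_{\Theta}[\cE]$ holds by construction, closing the parametrization.
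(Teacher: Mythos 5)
Your steps (i) and (ii) follow the standard route and are essentially sound, although the verification in (ii) that $\fT_\Theta[\cE]$ actually satisfies the interpolation condition is asserted rather than argued. The real problem is in step (iii). You define $\cE := ab^{-1}$ from $\sbm{a\\b} := \Theta^{-1}\sbm{S\\I_\cU}$, which presupposes that $\Theta(\lam)$ is pointwise invertible in $\D$ and that $b$ admits a bounded multiplier inverse; neither is established, and the first can genuinely fail for $\Theta$ satisfying \eqref{iden}. For example, with $\cX=\cU=\cY=\C$, $T=0$, $E=1$, $N=0$, $P=1$, formula \eqref{th} gives $\Theta(\lam)=\sbm{\lam & 0\\ 0 & 1}$, which satisfies \eqref{iden} but is singular at $\lam=0$. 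Worse, even granting those inversions, the kernel computation you sketch --- sandwiching \eqref{iden} between $\mat{I_\cY & -S(\lam)}$ and $\mat{I_\cY \\ -S(\eta)^*}$ and using the interpolation condition (via the one-variable analogues of Lemmas \ref{L:3.1}--\ref{L:3.2}) to see that $K_S(\lam,\eta)-F^S(\lam)P^{-1}F^S(\eta)^*$ is a positive kernel --- yields positivity of $u(\lam)\,\tfrac{I-\cE(\lam)\cE(\eta)^*}{1-\lam\bar\eta}\,u(\eta)^*$ with $u=\Theta_{11}-S\Theta_{21}$, and not of $\tfrac{I-\cE(\lam)\cE(\eta)^*}{1-\lam\bar\eta}$ itself. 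The outer $u$-factors cannot simply be cancelled, so the desired conclusion $\cE\in\cS(\cU,\cY)$ does not follow.

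The ingredient that closes both gaps at once --- and the one the paper uses in the exactly analogous multivariable argument at the end of Section \ref{S:param-ncSchur} --- is Leech's theorem (equivalently, commutant lifting). One does not construct $\cE$ from $\Theta^{-1}$ at all. Instead, one shows directly, using the interpolation condition, that the $\cL(\cY)$-valued kernel $\tfrac{u(\lam)u(\eta)^*-v(\lam)v(\eta)^*}{1-\lam\bar\eta}$ (with $v=S\Theta_{22}-\Theta_{12}$) equals $K_S-F^SP^{-1}F^{S*}$ and is therefore positive; Leech's theorem then produces a Schur-class $\cE$ with $v=u\cE$, and $S=\fT_\Theta[\cE]$ follows by trivial algebraic rearrangement, with no pointwise inversion of $\Theta$ or of $b$ anywhere. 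Note that the paper itself does not reprove Theorem \ref{T:1.0}, referring instead to \cite{FFGK98,BB18,FtHK18}; but those sources, like the paper's own multivariable argument, go through Leech/commutant lifting rather than through inversion of $\Theta$.
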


Returning to ${\bf OAP}_{\cH(S_0)}$, in which $S_0$ is now a given element of the Schur class $\cS(\cU, \cY)$  and $N$ is defined by \eqref{NdefIntro}, it follows from the first formula in \eqref{NdefIntro} that  \eqref{oapschur} holds with $S = S_0$, so that $S_0=\fT_\Theta[\cE_0]$ for some $\cE_0$ in $\cS(\cU,\cY)$.

\smallskip

We now present the solution to ${\bf OAP}_{\cH(S_0)}$ for the case where $P\succ 0$.
\begin{thm}
\label{T:1.2}
The ${\bf OAP}_{\cH(S_0)}$ has a solution if and only if ${\bf x}\in\Ran P^{\frac{1}{2}}$.
Assume that $P\succ 0$. Let $\Theta$ is a function satisfying \eqref{iden} and let $\cE_0$ be a Schur-class
function such that $S_0=\fT_\Theta[\cE_0]$. Then
\begin{enumerate}
\item All solutions $f$ of the ${\bf OAP}_{\cH(S_0)}$ are parametrized by the formula
\begin{equation}
f(\lambda)=f_0(\lambda)+(\Theta_{11}(\lambda)-S_0(\lambda)\Theta_{21}(\lambda))h(\lambda)
\label{1.20}
\end{equation}
where
$$
f_0(\lambda)=(E-S_0(\lam)N)(I-\lam T)^{-1}P^{-1}{\bf x}
$$
and where $h$ is a free parameter from the space $\cH(\cE_0)$.
\item The representation \eqref{1.20} is orthogonal in the metric of $\cH(S_0)$; hence $f_0$
is the minimal-norm solution to the ${\bf OAP}_{\cH(S_0)}$.
\item The multiplication operator
\begin{equation}
M_{\Theta_{11}-S_0\Theta_{21}}: \, \cH(\cE_0)\to \cH(S_0)
\label{paris}
\end{equation}
is a partial isometry.
\end{enumerate}
If in addition the operator $T$ satisfies the condition
\begin{equation}
\bigg(\bigcap_{k\ge 1}
\Ran (T^*)^k\bigg)\bigcap\Ker(T^*)=\{0\},
\label{8.30}
\end{equation}
then
\begin{itemize}
\item[(a)] The transformation $\fT_\Theta: \, \cS(\cU,\cY)\to \cS(\cU,\cY)$ is injective and hence,
$S_0=\fT_\Theta[\cE_0]$ for a unique function $\cE_0\in\cS(\cU,\cY)$.
\item[(b)] The multiplication operator \eqref{paris} is an isometry and hence, for every $f$ of the form \eqref{1.20},
$$
\|f\|^2_{\cH(S_0)}=\|f_0\|_{\cH(S_0)}^2+\|(\Theta_{11}-S_0\Theta_{21})h\|_{\cH(S_0)}^2=
\|P^{-\frac{1}{2}}{\bf x}\|^2_{\cX}+\|h\|^2_{\cH(\cE_0)}.
$$
\end{itemize}
\end{thm}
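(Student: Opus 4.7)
The plan is to decouple the theorem into two essentially independent pieces: (A) solvability and the formula for the minimum-norm solution $f_0$, handled by a Douglas-range argument in the lifted norm on $\cH(S_0)$, and (B) the Beurling-type parametrization of the homogeneous solutions by $M_{\Theta_{11}-S_0\Theta_{21}}$, handled by a kernel identity for $K_{S_0}$ derived from \eqref{iden}.

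For (A), using $\cH(S_0)=\Ran(I-M_{S_0}M_{S_0}^{*})^{\half}$ with the lifted norm, each $f\in\cH(S_0)$ has a unique representative $f=(I-M_{S_0}M_{S_0}^{*})^{\half}g$ with $g\perp\Ker(I-M_{S_0}M_{S_0}^{*})^{\half}$ and $\|g\|=\|f\|_{\cH(S_0)}$. The interpolation $\cO_{E,T}^{*}f={\bf x}$ reads $Ag={\bf x}$ with $A:=\cO_{E,T}^{*}(I-M_{S_0}M_{S_0}^{*})^{\half}$, and the intertwining $\cO_{E,T}^{*}M_{S_0}=\cO_{N,T}^{*}$ (recalled around \eqref{NdefIntro}) yields $AA^{*}=P$. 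Douglas' lemma then gives solvability iff ${\bf x}\in\Ran A=\Ran P^{\half}$, and, when $P\succ 0$, the minimum-norm lift $g_{\min}=A^{*}P^{-1}{\bf x}$ unfolds to $f_0=(E-S_0N)(I-\lambda T)^{-1}P^{-1}{\bf x}$ with $\|f_0\|_{\cH(S_0)}^{2}=\langle P^{-1}{\bf x},{\bf x}\rangle_{\cX}$. The same computation identifies the adjoint of $R:=\cO_{E,T}^{*}|_{\cH(S_0)}:\cH(S_0)\to\cX$ as $R^{*}y=(E-S_0N)(I-\lambda T)^{-1}y$.

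For (B), write $\Phi:=\Theta_{11}-S_0\Theta_{21}$. The algebraic core is the identity
\begin{align*}
K_{S_0}(\lambda,\eta)&=\Phi(\lambda)\,K_{\cE_0}(\lambda,\eta)\,\Phi(\eta)^{*}\\
&\quad+(E-S_0(\lambda)N)(I-\lambda T)^{-1}P^{-1}(I-\bar\eta T^{*})^{-1}(E-S_0(\eta)N)^{*},
\end{align*}
derived by multiplying \eqref{iden} on the left by $[I_\cY,-S_0(\lambda)]$ and on the right by $[I_\cY,-S_0(\eta)]^{*}$ and invoking the relation $[I,-S_0]\Theta=\Phi[I,-\cE_0]$, which is equivalent to $S_0=\fT_\Theta[\cE_0]$. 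Write the two summands as positive kernels $K_1,K_2$ with associated RKHSs $\cH_1,\cH_2$. Using the formula for $R^{*}$ from (A), a direct reproducing-kernel computation gives $RK_{S_0}(\cdot,\eta)y=RK_2(\cdot,\eta)y=(I-\bar\eta T^{*})^{-1}(E^{*}-N^{*}S_0(\eta)^{*})y$, whence $RK_1(\cdot,\eta)y=0$; by density $\cH_1\subseteq\cM:=\Ker R$. Combined with $\cM=\overline{\Ran R^{*}}^{\perp}=\cH_2^{\perp}$, this forces $\cH_1\perp\cH_2$ in $\cH(S_0)$, so $\cH_1\cap\cH_2=\{0\}$, and Aronszajn's sum theorem yields the orthogonal decomposition $\cH(S_0)=\cH_1\oplus\cH_2$ with isometric inclusion $\cH_1\hookrightarrow\cH(S_0)$ and $\cM=\cH_1$. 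The standard RKHS factorization lemma applied to $K_1=\Phi K_{\cE_0}\Phi^{*}$ makes $M_\Phi:\cH(\cE_0)\to\cH_1$ a coisometry; composing with the isometric inclusion yields the partial isometry in (iii) and, together with (A), the orthogonal parametrization in (ii); part (i) is the Douglas criterion.

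For the additional conclusions under \eqref{8.30}, the task reduces to showing $\Ker M_\Phi=\{0\}$ in $\cH(\cE_0)$, which both promotes the partial isometry to an isometry (hence the norm formula in (b) from the orthogonal decomposition) and gives injectivity of $\fT_\Theta$ by applying the same vanishing analysis to two Schur-class parameters producing the same $S_0$. I expect this to be the main obstacle: it requires a system-theoretic realization of $\Theta$ tied to $(E,N,T,P)$ and an observability argument forcing a hypothetical nonzero element of $\Ker M_\Phi$ to yield a state vector lying in $\bigcap_{k\ge 1}\Ran(T^{*})^{k}\cap\Ker T^{*}$, which is trivial by \eqref{8.30}. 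Every other ingredient—Douglas, the algebraic derivation of the kernel identity, Aronszajn's sum, and the RKHS factorization lemma—is standard.
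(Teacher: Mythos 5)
Your argument for parts (1)--(3) is correct and follows essentially the same route as the paper's treatment: the key ingredients are the identification of the interpolation condition with $M_{F^{S_0}}^{[*]}f=\mathbf{x}$ giving the Douglas criterion $\mathbf{x}\in\Ran P^{1/2}$, and the additive decomposition $K_{S_0}=\Phi K_{\cE_0}\Phi^*+F^{S_0}P^{-1}(F^{S_0})^*$ obtained by pre- and post-multiplying \eqref{iden} by $[I,-S_0]$ and using $[I,-S_0]\Theta=\Phi[I,-\cE_0]$, followed by the RKHS factorization making $M_\Phi\colon\cH(\cE_0)\to\cH_1$ a coisometry. Your repackaging of the orthogonality step through Aronszajn's sum-of-kernels theorem (rather than directly computing the reproducing kernel of $\cN^\perp$ as in the paper's Section \ref{S:param-ncSchur}) is a cosmetic variant, not a genuinely different route; both amount to showing $\Ker\cO_{E,T}^*|_{\cH(S_0)}=\cH_1$ and $\cN=\cH_2$ are complementary with isometric inclusions.

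For the ``additional'' conclusions (a) and (b) under \eqref{8.30}, you flag correctly that what is needed is $\Ker M_\Phi=\{0\}$ via a realization-theoretic observability argument, but you do not actually carry this out. This is a genuine gap in the proposal as written; note, however, that the paper itself does not prove this part here but instead cites \cite[Theorem 4.8]{bbt3}, so your acknowledged sketch is at the same level of detail as the paper's own treatment of that portion.
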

The first part in Theorem \ref{T:1.2} can be considered as a special case of Theorem 4.1 and Proposition 4.2 in \cite{bbt3}, where
the more general {\em Abstract Interpolation Problem} {\bf AIP}$_{\cH(S_0)}$ was considered.
The results in \cite{bbt3} cover the general case when $P\succeq 0$ is not necessarily strictly positive definite
and rely on a Redheffer linear-fractional representation
$$
S_0=\Sigma_{11}+\Sigma_{12}\cE_0(I-\Sigma_{22}\cE_0)^{-1}\Sigma_{21}
$$
of $S_0$ in terms of Schur-class functions $\cE$ and $\Sigma=\sbm{\Sigma_{11}& \Sigma_{12}\\ \Sigma_{21} & \Sigma_{22}}$,
which, however, can be  written in terms of the chain-matrix linear-fractional formula \eqref{edescr} if $P\succ 0$.
The ``additional" part of the theorem follows from \cite[Theorem 4.8]{bbt3}. Note that condition \eqref{8.30} holds
automatically if $\dim \cX < \infty$ (in particular, for the Nevanlinna-Pick problem considered in Section \ref{S:Intro}). Also it
holds if  $T^*$ is nilpotent or if $\Ker(T^{*}) = \{0\}$.
Let us also mention that there is now a generalization of Theorem \ref{T:1.2} (specialized
to the scalar-valued setting) to the case where the de Branges-Rovnyak-type spaces are specified as
sub-Bergman rather than sub-Hardy spaces (see \cite{BB13}).

\subsection{{\bf OAP}$_{\cH(S_0)}$ as a generalization of the Beur\-ling-Lax theorem}\label{S:BLthm}
As explained in Theorem 6.2 of \cite{bbt3} but in the context of a more general Abstract Interpolation Problem,
one can view Theorem \ref{T:1.2} as a generalization of the Beurling-Lax theorem (see e.g.~\cite{NF}).  Recall that any
vector-valued Hardy space $H^2_\cY$ is equipped with the shift operator $S_\cY \colon f(\lam) \mapsto \lam f(\lam)$.
The Beurling-Lax theorem characterizes subspaces $\cM \subset H^2_\cY$ which are invariant under $S_\cY$
as those which have a representation of the form $B \cdot H^2_{\cY_0}$ for some coefficient Hilbert space
$\cY_0$ with $\dim \cY_0 \le \dim \cY$ and $B$ an inner function on the disk ${\mathbb D}$
with values in $\cL(\cY_0, \cY)$ (i.e., such that the multiplication operator $M_B \colon h(\lam) \mapsto B(\lam) h(\lam)$
is an isometry from $H^2_{\cY_0}$ into $H^2_\cY$).

\smallskip

To make the connection between Theorem \ref{T:1.2} and the Beurling-Lax theorem,
let us consider the special case of Theorem \ref{T:1.2} where the coefficient space
$\cU$ is taken to be the zero space $\{0\}$ and hence the Schur class  $\cS(\{0\}, \cY)$  consists only
of the zero function $S_0(z) =0 \colon \{0\} \to \cY$, $N$ defined by \eqref{NdefIntro} is also $0$,
the signature matrix \eqref{J-inter} collapses to $J = I_\cY$, the $J$-bi-inner function
$\Theta$ collapses to the bi-inner function $\Theta_{11}$ with values in $\cL(\cU, \cY)$,
the parameter function $\cE_0$ such that $S_0=\fT_{\Theta}[\cE_0]=\Theta_{11}\cE_0$ is the zero function
$\cE_0(\lam) \colon \{0\} \to \cY$, the de Branges-Rovnyak space $\cH(S_0)$ collapses to the Hardy space $H^2_\cY$.
Let us take $\bx = 0$ so as to focus on the homogeneous problem. We are left with

\medskip
\noindent
{\bf OAP}$_{H^2_\cY}$:  {\em Given an output stable pair $(E, T)$ with $E \in \cL(\cX, \cY)$ and $T \in \cL(\cX)$,
find all functions $f \in H^2_\cY$ so that}
$$
(E^*f)^{\wedge L}(T^*):=\cO_{E,T}^* f  = 0.
$$
\noindent
Let $\cM_{E,T} \subset H^2_\cY$ denote the set of all solutions.  From the intertwining property of
$\cO_{E,T}$
$$
S_\cY^* \cO_{E,T} = \cO_{E,T} T,
$$
we see that
$$
\cO_{E,T}^* S_\cY = T^* \cO_{E,T}^*
$$
from which it follows immediately that the subspace $\cM_{E,T}$ is $S_\cY$-invariant.
As a consequence of items (1) and (3)  in Theorem \ref{T:1.2}, we see that
$$
\cM_{E,T} =  \{ \Theta_{11} h \colon h \in H^2_\cU\}
$$
where $\Theta_{11}$ is inner, i.e., $M_{\Theta_{11}} \colon H^2_\cU \to H^2_\cY$ is an isometry.  Thus
$\Theta_{11}$ serves as a Beurling-Lax representer for the shift-invariant subspace $\cM_{E,T}$.

\smallskip

To get a proof of the Beurling-Lax theorem in general from this analysis, it remains only to argue that every
shift-invariant subspace $\cM$ of $H^2_\cY$ has the form $\cM = \cM_{E,T}$ for some output stable
pair $(E,T)$.  But this is well known from operator-model theory ideas as follows.
Let us introduce the notation $\bev_0$ for the operator from $H^2_\cY$ to $\cY$ given by
evaluation at $0$:  $\bev_0 \colon f \mapsto f(0)$.  View the pair of operators
$$
(\bev_0, S_\cY^*) \in \cL(H^2_\cY, \cY) \times \cL(H^2_\cY)
$$
as an output pair and form the observability operator $\cO_{\bev_0, S_\cY^*} \colon H^2_\cY \to H^2_\cY$.
The computation
$$
  \cO_{\bev_0, S_\cY^*} \colon f \mapsto \sum_{n=0}^\infty (\bev_0 S_\cY^{*n} f) \lam^n
   = \sum_{n=0}^\infty f_n \lam^n = f(\lam)
 $$
shows that $\cO_{\bev_0, S_\cY^*}$ is the identity operator on $H^2_\cY$.
Now consider the restricted output pair $(E, T) \in \cL(\cX, \cY) \times \cL(\cX)$ given by
$$
 \cX = \cM^\perp, \quad E = \bev_0|_\cX, \quad T = S_\cY^*|_\cX.
$$
A consequence of the preceding discussion is that $\cM^\perp  = \Ran \cO_{E,T}$.
Hence $\cM = (\cM^\perp)^\perp$ is given by $\cM = \Ker \cO_{E,T}^*$ as wanted.

\smallskip

 In conclusion, we can view Theorem \ref{T:1.2} as a generalization of the Beurling-Lax theorem to a
 more general non-homogeneous context.

\section{The Fock space setting}\label{S:FockSpace}

In this section we present the analogue of the single-variable results of Section~\ref{S:SingleVar} for multipliers between two Fock spaces. To define the Fock space, we let $\free$ denote the unital free
semigroup (i.e., monoid)  generated by the set of $d$ letters $\{1, \dots,
d\}$. Elements of $\free$ are words of the form $i_{N}
\cdots i_{1}$ where $i_{\ell} \in \{1, \dots, d\}$ for each $\ell\in\{1,
\dots, N\}$ with multiplication given by concatenation. The unit element
of $\free$ is the empty word denoted by $\emptyset$.
For $\alpha = i_{N} i_{N-1} \cdots i_{1} \in \free$, we let $|\alpha|$ denote the number
$N$ of letters in $\alpha$. Furthermore, we define the {\em reversal} of $\alpha$ to be the element $\alpha^\top\in\free$ given by
\begin{equation}\label{reversal}
\alpha^\top = i_1 \cdots i_N \quad\text{if}\quad \alpha = i_N \cdots i_1.
\end{equation}

We let $z = (z_{1}, \dots,z_{d})$ to be a
collection of $d$ formal noncommuting variables and given a  Hilbert space $\cY$,
let $\cY\langle\langle z\rangle\rangle$ denote the set of noncommutative formal power series
$\sum_{\alpha \in \free} f_\alpha z^\alpha$ where $f_\alpha \in \cY$ and where
\begin{equation}
\label{1.1}
z^{\alpha} =z_{i_{N}}z_{i_{N-1}} \cdots
z_{i_{1}}\quad\mbox{if}\quad \alpha= i_{N}i_{N-1} \cdots i_{1}.
\end{equation}
The Fock space $H^2_{\cY}(\free)$ is the Hilbert space given by
\begin{equation}   \label{Fock}
H_{\cY}^{2}(\free) = \bigg\{f(z)=\sum_{\alpha \in \free}
f_{\alpha}z^{\alpha}
\in \cY\langle\langle z\rangle\rangle\colon \;
\|f\|_{H_{\cY}^{2}(\free)}^2:=\sum_{\alpha \in \free} \|f_{\alpha}\|_{\cY}^{2} <\infty \bigg\}
\end{equation}
with inner product
\[
\inn{f}{g}_{H_{\cY}^{2}(\free)}:=\sum_{\alpha \in \free} \ov{g}_\alpha f_{\alpha} \quad \mbox{for }f(z)=\sum_{\alpha \in \free}
f_{\alpha}z^{\alpha},\, g(z)=\sum_{\alpha \in \free}
g_{\alpha}z^{\alpha}\in H_{\cY}^{2}(\free),
\]
and is equipped with the tuple ${\bf R}_z$ of right coordinate-variable multipliers
\begin{equation}
{\bf R}_z=(R_{z_1},\ldots,R_{z_d}), \quad R_{z_j}: \; f(z)\mapsto f(z)z_j.
\label{intertw}
\end{equation}

The noncommutative Schur class $\mathcal S_{{\rm nc},d}(\cU,\cY)$ is then defined as the set of
{\em contractive multipliers} from $H_{\cU}^{2}(\free)$ to $H_{\cY}^{2}(\free)$, i.e.,
formal power series $S(z)=\sum_{\alpha\in\free}S_\alpha z^\alpha$
with coefficients $S_\alpha\in\cL(\cU,\cY)$ such that the associated multiplication operator
\begin{equation}
M_S: \, u(z)=\sum_{\alpha\in\free}u_\alpha z^\alpha\mapsto S(z)u(z):=\sum_{\alpha\in\free}
\bigg(\sum_{\beta\gamma=\alpha}S_\beta u_\gamma\bigg)z^\alpha
\label{mo}
\end{equation}
is a contraction from $H_{\cU}^{2}(\free)$ to $H_{\cY}^{2}(\free)$. Similar to the single-variable
case, a contraction operator $\Psi: \, H_{\cU}^{2}(\free)$ to $H_{\cY}^{2}(\free)$ equals $M_S$ for
some $S\in\mathcal S_{{\rm nc},d}(\cU,\cY)$ if and only if
\begin{equation}   \label{CharMult}
(R_{z_j}\otimes I_{\cY})\Psi=\Psi (R_{z_j}\otimes I_{\cU}) \quad\mbox{for}\quad j=1,\ldots,d.
\end{equation}
Given $S\in\mathcal S_{{\rm nc},d}(\cU,\cY)$, the associated de Branges-Rovnyak space $\cH(S)$ is defined
as the range space
\begin{equation}
{\cH}(S)=\Ran (I-M_SM^*_S)^{\frac{1}{2}}
\label{1.13}
\end{equation}
with the lifted norm defined by
\begin{equation}
\| (I - M_{S}M_{S}^{*})h\|^2_{\cH(K_{S})}=\langle (I - M_{S} M_{S}^{*})h, \,
h\rangle_{H^2_{\cY}(\free)}.
\label{1.15}
\end{equation}
This space also can be viewed as a reproducing kernel Hilbert space, but in the formal noncommutative sense of \cite{NFRKHS}, which we recall in Subsection \ref{S:NFRKHS}.

\subsection{Interpolation problem}\label{S:FockInter}
Extending the noncommutative functional calculus \eqref{1.1}
to a $d$-tuple of operators ${\mathbf T} = (T_{1}, \dots, T_{d})\in\cL(\cX)^d$ by
$$
{\mathbf T}^{\alpha} := T_{i_{N}}T_{i_{N-1}} \cdots T_{i_{1}}\quad
\text{if}\quad   \alpha = i_{N} i_{N-1} \cdots i_{1} \in \free,
$$
and letting
\begin{equation}
Z(z) = \begin{bmatrix} z_{1} & \cdots & z_{d}
\end{bmatrix}\otimes I_{\cX} \quad\mbox{and}\quad
T=\begin{bmatrix} T_{1} \\ \vdots \\ T_{d} \end{bmatrix},
\label{1.6a}
\end{equation}
we define the noncommutative observability operator $\cO_{E,\bT}$ of the output pair $(E,\bT)$
with $E\in\cL(\cX,\cY)$ by the formula
\begin{equation}
\cO_{E, \bT} \colon \, x \mapsto \sum_{\alpha\in\free}(E\bT^\alpha x)z^\alpha=
E (I_{\cX} - Z(z)T)^{-1}x
\label{obsnc}
\end{equation}
and say that the pair $(E,\bT)$ is {\em output stable} if the operator
$\cO_{E, \bT}: \, \cX\to H^2_{\cY}(\free)$ is bounded. In this case, we
introduce the {\em observability gramian}
\begin{equation}
{\mathcal G}_{E, \bT}:={\mathcal O}_{E,\bT}^*{\mathcal O}_{E, \bT}
=\sum_{\alpha\in\free} \bT^{*\alpha^\top}E^{*} E\bT^\alpha
\label{pr}
\end{equation}
and note that the strong convergence of the series in \eqref{pr}
follows from the power-series expansion \eqref{obsnc} for the
observability operator together with the characterization
\eqref{Fock} of the $H^2_\cY(\free)$-norm.

\smallskip

For an output-stable pair $(E, {\bf T})$ as above,
we define a {\em left-tangential functional calculus}
$f \to (E^{*}f)^{\wedge  L}(\bT^{*})$ on $H^2_{\cY}(\free)$ by
          \begin{equation}
\label{2.1u}
(E^{*} f)^{\wedge L}(\bT^{*}) = \sum_{\alpha\in\free}
             \bT^{*\alpha^\top} E^{*} f_{\alpha}\quad \text{if} \quad f =
\sum_{\alpha\in\free}f_{\alpha} z^{\alpha} \in H^2_{\cY}(\free).
        \end{equation}
The computation
\begin{equation}
\Big{\langle} \sum_{\alpha\in\free} \bT^{*\alpha^\top}E^{*}f_{\alpha}, \; x {\Big \rangle}_{\cX} =
        \sum_{\alpha\in\free} \left\langle f_{\alpha}, \; E \bT^{\alpha}
        x \right \rangle_{\cY}
         = \langle f, \; \mathcal {O}_{E, \bT} x\rangle_{H^2_{\cY}(\free)}
\label{compute}
\end{equation}
        shows that the output-stability of the pair $(E, \bT)$ is exactly
        what is needed to verify that the infinite series in the definition
        \eqref{2.1u} of $(E^{*}f)^{\wedge L}(\bT^{*})$ converges
        in the weak topology on $\cX$. In fact the left-tangential
        evaluation with operator argument $f \to (E^{*}f)^{\wedge L}
(\bT^{*})$
        amounts to the adjoint of the observability operator:
$$
            (E^{*} f)^{\wedge L}(\bT^{*}) = \cO_{E, \bT}^{*}
            f \quad\text{for}\quad f \in H^2_{\cY}(\free).
$$
The latter evaluation makes sense for all $f\in\cH(S)\subset H^2_{\cY}(\free)$ and on the other hand,
extends to contractive multipliers $S\in\cS_{{\rm nc},d}(\cU,\cY)$ by
$$
(E^{*} S)^{\wedge L}(\bT^{*}) = \cO_{E, \bT}^{*}M_S|_{\cU}: \, \cU\to \cX
$$
with $\cU$ in the restriction to be identified with the constant $\cU$-valued functions in $H^2_\cU(\free)$. The Operator Argument interpolation Problem in the de Branges-Rovnyak space $\cH(S)$ is formulated similarly to its single-variable prototype.

\medskip
\noindent
${\bf OAP}_{\cH(S_0)}$: {\em Given an output stable pair $(E,\bT)\in\cL(\cU,\cY)\times\cL(\cX)^d$, a vector
${\bf x}\in\cX$
and a Schur-class multiplier $S_0 \in\cS_{{\rm nc},d}(\cU,\cY)$, find all $f\in\cH(S_0)$ such that
    \begin{equation}\label{evaldbr}
    (E^{*} f)^{\wedge L}(\bT^{*}):= \cO_{E, \bT}^{*}f={\bf x}.
    \end{equation}}
With a given output stable pair $(E,{\bf T})$
and a Schur-class multiplier
$$
S_0(z)=\sum_{\alpha\in\free}^\infty S_{0,\alpha} z^\alpha,
$$
we associate the operator $N\in\cL(\cX,\cU)$ defined by the formula
\begin{equation}\label{Ndef}
N:=(E^{*} S_0)^{\wedge L}(\bT^{*})^*=
\sum_{\alpha\in\free} S_{0,\alpha}^*E{\bf T}^\alpha
\end{equation}
or, equivalently, via its adjoint
\begin{equation}
N^*=\cO_{E, {\bf T}}^{*}M_{S_0}\vert_\cU: \, \cU\to\cX.
\label{1.8}
\end{equation}
By \cite[Proposition 4.1]{bbkats}, the pair $(N,{\bf T})$ is output stable and the
equality
\begin{equation}
\cO_{E,{\bf T}}^*M_{S_0}=\cO_{N,{\bf T}}^*
\label{1.8*}
\end{equation}
holds. Therefore,
\begin{equation}
P:=\cO_{E,{\bf T}}^*\cO_{E,{\bf T}}-\cO_{N,{\bf T}}^*\cO_{N,{\bf T}}
=\cO_{E,{\bf T}}^*(I-M_{S_0}M_{S_0}^*)\cO_{E,{\bf T}}\succeq 0.
\label{1.8**}
\end{equation}
Using the formula \eqref{pr} for $\cO_{E, \bT}^{*}\cO_{E, \bT}$ and a similar formula for the
gramian $\cO_{N, \bT}^{*}\cO_{N, \bT}$ we get the infinite series representation
$$
P=\sum_{\alpha\in\free} \bT^{*\alpha^\top}(E^{*} E-N^*N)\bT^\alpha,
$$
from which it is easy to verify that $P$ satisfies the Stein identity
\begin{equation}
\label{2.8}
         P - \sum_{j=1}^{d} T_{j}^{*} P T_{j} = E^*E-N^*N.
\end{equation}
We next introduce the formal power series
\begin{equation}
F^{S_0}(z)=(E-S_0(z)N)(I-Z(z)T)^{-1}.
\label{1.17}
\end{equation}

\begin{lem}\label{L:3.1}
Let $P$ be defined as in \eqref{1.8**} and $F^{S_0}$ as in \eqref{1.17}. Then for any $x\in\cX$, we have that $F^{S_0}x:=F^{S_0}(\cdot)x$ is in $\cH(S_0)$ and
\begin{equation}
\|F^{S_0}x\|_{\cH(S_0)}=\|P^\half x\|_{\cX}.
\label{1.17*}
\end{equation}
\end{lem}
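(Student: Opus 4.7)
The plan is to identify $F^{S_0}(\cdot)x$ with an explicit element of $\Ran(I-M_{S_0}M_{S_0}^*)$ and then appeal directly to the definition of the $\cH(S_0)$-norm in \eqref{1.15}. The computation should mimic the single-variable argument outlined in Section~\ref{S:SingleVar}, now using the noncommutative observability operators and the identity \eqref{1.8*}.

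First I would rewrite $F^{S_0}(z)x$ as a difference. Expanding $(I-Z(z)T)^{-1} = \sum_\alpha Z(z)^\alpha T^\alpha$ and using the defining formula \eqref{obsnc}, one sees that $E(I - Z(z)T)^{-1} x = (\cO_{E,\bT}x)(z)$ and $N(I-Z(z)T)^{-1}x = (\cO_{N,\bT}x)(z)$; here the output stability of $(N,\bT)$ (recalled just before \eqref{1.8*}) guarantees the latter is in $H^2_\cU(\free)$. Consequently
\begin{equation*}
F^{S_0}(\cdot)x \;=\; \cO_{E,\bT}x \;-\; M_{S_0}\cO_{N,\bT}x.
\end{equation*}
Taking the adjoint of \eqref{1.8*} gives $M_{S_0}^*\cO_{E,\bT} = \cO_{N,\bT}$, so substituting yields
\begin{equation*}
F^{S_0}(\cdot)x \;=\; \bigl(I - M_{S_0}M_{S_0}^*\bigr)\cO_{E,\bT}x.
\end{equation*}
This already puts $F^{S_0}x$ into $\Ran(I-M_{S_0}M_{S_0}^*)\subset \Ran(I-M_{S_0}M_{S_0}^*)^{1/2} = \cH(S_0)$, settling membership.

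For the norm equality, I would invoke \eqref{1.15} with $h = \cO_{E,\bT}x$:
\begin{equation*}
\|F^{S_0}x\|_{\cH(S_0)}^2 \;=\; \bigl\langle (I-M_{S_0}M_{S_0}^*)\cO_{E,\bT}x,\, \cO_{E,\bT}x\bigr\rangle_{H^2_\cY(\free)} \;=\; \bigl\langle \cO_{E,\bT}^*(I-M_{S_0}M_{S_0}^*)\cO_{E,\bT}\,x,\, x\bigr\rangle_{\cX},
\end{equation*}
and the right-hand side equals $\langle Px,x\rangle_\cX = \|P^{1/2}x\|_\cX^2$ by the definition of $P$ in \eqref{1.8**}. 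This gives \eqref{1.17*}.

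No serious obstacle is anticipated: the only ingredients are the power-series unwinding of $(I-Z(z)T)^{-1}$, the intertwining identity \eqref{1.8*} from \cite{bbkats}, and the defining norm relation \eqref{1.15}. The mild point to be careful about is the weak/strong convergence of the series defining $\cO_{E,\bT}x$ and $\cO_{N,\bT}x$, but this is guaranteed by output stability of the two pairs, which is available at this point.
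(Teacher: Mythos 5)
Your proof is correct and follows essentially the same route as the paper's: you rewrite $M_{F^{S_0}} = \cO_{E,\bT} - M_{S_0}\cO_{N,\bT} = (I - M_{S_0}M_{S_0}^*)\cO_{E,\bT}$ via the intertwining \eqref{1.8*}, invoke the range characterization \eqref{1.13} for membership, and compute the norm by plugging $h = \cO_{E,\bT}x$ into \eqref{1.15} and using \eqref{1.8**}. Nothing is missing.
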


\begin{proof}
Indeed, by \eqref{1.17} and \eqref{1.8*}, the multiplication operator
$M_{F^{S_0}}$ can be written in terms of observability operators as
\begin{equation}
M_{F^{S_0}}=\cO_{E,{\bf T}}-M_{S_0}\cO_{N,{\bf T}}=(I_{\cX}-M_{S_0}M_{S_0}^*)\cO_{E,{\bf T}}
\label{1.17**}
\end{equation}
which together with the range characterization  \eqref{1.13} of $\cH(S_0)$ implies
that $M_{F^{S_0}}$ maps $\cX$ into $\cH(S_0)$. We now apply equality \eqref{1.15}
to $h=\cO_{E,{\bf T}}x$ and make use of \eqref{1.8**} to conclude
\begin{align*}
\|F^{S_0}x\|^2_{\cH(S_0)}=&
\langle (I - M_{S_0} M_{S_0}^{*}) \cO_{E,{\bf T}}x, \cO_{E,{\bf T}}x\rangle_{H_\cY^2(\free)}=
\langle Px, x \rangle_{\cX}
\end{align*}
which is equivalent to \eqref{1.17*}.
\end{proof}

\begin{rem}\label{R:3.0}
{\rm For the operator
$M_{F^{S_0}} \colon \cX\to \cH(S_0)\subset H^2_\cY(\free)$ we will write $M_{F^{S_0}}^*$ for its adjoint
in the metric of $H^2_\cY(\free)$ and we will denote by $M_{F^{S_0}}^{[*]}$ the adjoint of $M_{F^{S_0}}$ in the metric
of $\cH(S_0)$. Note that these two adjoints are not the same  unless $S_0$ is inner (i.e., the multiplication
operator $M_{S_0}: \; H^2_\cU \to H^2_\cY$ is an isometry)}.
\end{rem}

\begin{lem}\label{L:3.2}
Let $M_{F^{S_0}}: \; \cX\to \cH(S_0)$ be defined as in \eqref{1.17**}. Then
\begin{equation}
M_{F^{S_0}}^{[*]}=\cO_{E,{\bf T}}^*|_{\cH(S_0)}\quad\mbox{and}\quad M_{F^{S_0}}^{[*]}M_{F^{S_0}}=P.
\label{1.10}
\end{equation}
\end{lem}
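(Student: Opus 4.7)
The plan is to derive both identities directly from the factorization $M_{F^{S_0}} = (I-M_{S_0}M_{S_0}^*)\cO_{E,\bT}$ established in \eqref{1.17**}, combined with the isometric description of the lifted norm \eqref{1.15}. Note that $M_{F^{S_0}}^{[*]}$ denotes the adjoint computed in the $\cH(S_0)$-metric, so one cannot simply quote the $H^2_\cY(\free)$-adjoint of the factorization; the passage between the two pairings is what does all the work.

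My first step would be to invoke the identification of $\cH(S_0)$ with the subspace $\kr((I-M_{S_0}M_{S_0}^*)^{1/2})^\perp \subset H^2_\cY(\free)$: every $f\in\cH(S_0)$ is uniquely of the form $f = (I-M_{S_0}M_{S_0}^*)^{1/2}f_0$ with $f_0\perp\kr(I-M_{S_0}M_{S_0}^*)^{1/2}$, and then $\|f\|_{\cH(S_0)} = \|f_0\|_{H^2_\cY(\free)}$, which is essentially just a polarized restatement of \eqref{1.15}. For $x\in\cX$ I would then write
$$
M_{F^{S_0}}x = (I-M_{S_0}M_{S_0}^*)^{1/2}h_0,\qquad h_0 := (I-M_{S_0}M_{S_0}^*)^{1/2}\cO_{E,\bT}x,
$$
observe that $h_0$ automatically lies in the required orthogonal complement, and compute
$$
\langle M_{F^{S_0}}x, f\rangle_{\cH(S_0)} = \langle h_0, f_0\rangle_{H^2_\cY(\free)} = \langle \cO_{E,\bT}x, (I-M_{S_0}M_{S_0}^*)^{1/2}f_0\rangle_{H^2_\cY(\free)} = \langle \cO_{E,\bT}x, f\rangle_{H^2_\cY(\free)} = \langle x, \cO_{E,\bT}^*f\rangle_\cX.
$$
This yields $M_{F^{S_0}}^{[*]}f = \cO_{E,\bT}^*f$ for every $f\in\cH(S_0)$, which is the first equality in \eqref{1.10}.

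The second identity then follows by pure composition, using \eqref{1.17**} and the expression for $P$ in \eqref{1.8**}:
$$
M_{F^{S_0}}^{[*]}M_{F^{S_0}} = \cO_{E,\bT}^*(I-M_{S_0}M_{S_0}^*)\cO_{E,\bT} = P.
$$
The only delicate point I anticipate is justifying the transfer of one power of $(I-M_{S_0}M_{S_0}^*)^{1/2}$ across the two inner products; this is precisely the isometric content of \eqref{1.15}, and it applies here without any density or approximation argument because $h_0$ already sits in the orthogonal complement of $\kr(I-M_{S_0}M_{S_0}^*)^{1/2}$.
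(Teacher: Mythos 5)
Your argument is correct and follows essentially the same route as the paper: both proofs rest on the factorization $M_{F^{S_0}} = (I-M_{S_0}M_{S_0}^*)\cO_{E,\bT}$ from \eqref{1.17**} together with the lifted-norm identity \eqref{1.15}, with you making the underlying square-root bookkeeping explicit where the paper invokes the pairing $\langle f, (I-M_{S_0}M_{S_0}^*)g\rangle_{\cH(S_0)} = \langle f, g\rangle_{H^2_\cY(\free)}$ directly. The only cosmetic difference is in the second identity, where the paper polarizes the norm equality \eqref{1.17*} from Lemma~\ref{L:3.1} while you compose the first identity with the factorization and cite \eqref{1.8**}; these amount to the same computation.
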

\begin{proof}
The first formula is justified by the equalities
 \begin{align*}
    \langle M_{F^{S_0}}^{[*]} f, \, x\rangle_{\cX}
    =\langle f, \,M_{F^{S_0}} x\rangle_{\cH(S_0)}
    =&\langle f, \, (I-M_{S_0}M_{S_0}^*)\cO_{E,{\bf T}}x\rangle_{\cH(S_0)}
    \notag \\
    =&\langle f, \, \cO_{E,{\bf T}}x\rangle_{H^2_\cY(\free)}=\langle
    \cO_{E,{\bf T}}^*f, \, x\rangle_{\cX}\notag
    \end{align*}
holding for all $f\in\cH(S_0)$ and $x\in\cX$. The second equality follows from \eqref{1.17*}.
\end{proof}
\begin{cor}
The ${\bf OAP}_{\cH(S_0)}$ has a solution if and only if ${\bf x}\in\Ran P^\half$.
\label{R:3.3}
\end{cor}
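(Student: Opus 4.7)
The plan is to translate the interpolation condition into a range condition for $M_{F^{S_0}}^{[*]}$ via Lemma \ref{L:3.2}, and then apply the standard Hilbert-space identity $\Ran A = \Ran(AA^{*})^{1/2}$ to rewrite that range as $\Ran P^{1/2}$.

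First, for any $f\in\cH(S_0)$, the definition \eqref{2.1u} gives $(E^{*}f)^{\wedge L}(\bT^{*}) = \cO_{E,\bT}^{*}f$, and by the first identity in \eqref{1.10} this coincides with $M_{F^{S_0}}^{[*]}f$. Hence the interpolation equation \eqref{evaldbr} admits a solution $f\in\cH(S_0)$ if and only if ${\bf x}\in\Ran M_{F^{S_0}}^{[*]}$.

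Second, by the second identity in \eqref{1.10} we have $M_{F^{S_0}}^{[*]}M_{F^{S_0}}=P$. Setting $A := M_{F^{S_0}}^{[*]} \colon \cH(S_0)\to\cX$ and noting that its $\cH(S_0)$-adjoint is $A^{*} = M_{F^{S_0}}$, we obtain $AA^{*}=P$ on $\cX$. Applying the elementary identity $\Ran A = \Ran(AA^{*})^{1/2}$, which follows from the polar decomposition of $A$ or equivalently from Douglas' range inclusion lemma, yields $\Ran M_{F^{S_0}}^{[*]} = \Ran P^{1/2}$. Chaining this with the first step completes the proof.

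There is no real obstacle; the only point of care is the distinction flagged in Remark \ref{R:3.0} between the $H^2_\cY(\free)$-adjoint $M_{F^{S_0}}^{*}$ and the $\cH(S_0)$-adjoint $M_{F^{S_0}}^{[*]}$. This has already been absorbed into Lemma \ref{L:3.2}, so the corollary is immediate once that lemma is in hand. An explicit minimum-norm solution for ${\bf x}$ in the dense subspace $\Ran P \subseteq \Ran P^{1/2}$ is furthermore provided by $f_0 = F^{S_0}P^{-1}{\bf x}$ in the notation of Lemma \ref{L:3.1}, foreshadowing the parametrization to be established in the full analogue of Theorem \ref{T:1.2}.
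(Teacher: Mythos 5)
Your proof is correct and follows essentially the same route as the paper: rewrite the interpolation condition as $M_{F^{S_0}}^{[*]}f={\bf x}$ via the first identity in \eqref{1.10}, then invoke the second identity $M_{F^{S_0}}^{[*]}M_{F^{S_0}}=P$ together with the standard range equality $\Ran A = \Ran(AA^{*})^{1/2}$ to conclude $\Ran M_{F^{S_0}}^{[*]} = \Ran P^{1/2}$. The paper compresses this into two sentences without naming the range identity explicitly, but the logic is identical; your version merely spells out the detail and correctly flags that the relevant adjoint is the $\cH(S_0)$-adjoint rather than the $H^2_\cY(\free)$-adjoint.
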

Indeed, by the first formula in \eqref{1.10}, the interpolation condition \eqref{evaldbr}
can be written as
\begin{equation}
M^{[*]}_{F^{S_0}}f={\bf x}.
\label{newcond}
\end{equation}
Combining the latter condition with the second equality in \eqref{1.10} we see that
the problem has a solution $f$ if and only if ${\bf x}\in\Ran M^{[*]}_{F^{S_0}}=\Ran P^\half$.

\subsection{Noncommutative formal reproducing kernel Hilbert spaces} \label{S:NFRKHS}

To proceed we need formal positive kernels and associated
reproducing kernel Hilbert spaces. Here we briefly recall the basic concepts and refer for more complete details to \cite{NFRKHS} and  as well as \cite[Section 2.1]{bbbook}.

Given a coefficient Hilbert space $\cY$, we define the space $\cL(\cY)\langle \langle z, \bzeta \rangle \rangle$
to consist of all formal power series (formal kernels)
\begin{equation}   \label{fk}
K(z,\zeta) = \sum_{\alpha, \beta \in \free} K_{\alpha, \beta}z^{\alpha}
\overline{\zeta}^{\beta^{\top}}, \quad K_{\alpha,\beta}\in\cL(\cY),
\end{equation}
in two collections of formal noncommuting indeterminates $z = (z_{1}, \dots,z_{d})$ and $\bzeta = (\bzeta_{1}, \dots,$  $\bzeta_{d})$ such that each $z_{k}$ commutes with each $\bzeta_{j}$. Here $\beta^{\top}$is defined as in \eqref{reversal}. In the algebraic manipulations to follow
we use the formal notations
 $$
   \overline{\zeta}_j^* = \zeta_j, \quad (\overline{\zeta}^\alpha)^* = \zeta^{\alpha^\top}.
 $$
The kernel $K$ in \eqref{fk} is called {\em positive} if for any finitely supported $\cY$-valued function
$\alpha \mapsto y_{\alpha} $ on $\free$,
$$
   \sum_{\alpha, \beta \in \free} \langle K_{\alpha, \beta} \,
   y_{\alpha}, y_{\beta} \rangle_{_{\cY}} \ge 0.
$$
Such kernels are characterized by the existence of a  {\em Kolmogorov decomposition}
$$
          K(z,\zeta) = H(z) H(\zeta)^{*}
$$
with $H$ some operator-valued formal power series $H\in\cL(\cX,\cY)\langle \langle z\rangle \rangle$, for some Hilbert space $\cX$, that is, $ H(z)$ is of the form
$$
H(z)=\sum_{\alpha\in\free} H_{\alpha} z^{\alpha}\quad\text{with}\ H_{\alpha}\in \cL(\cX,\cY).
$$

We say that a Hilbert space $\mathcal H$ consisting of power series $f$ with coefficients in $\cY$
$$
f(z)= \sum_{\alpha \in \free} f_\alpha z^\alpha  \in \cY\langle \langle z \rangle \rangle,
$$
is a {\em noncommutative formal reproducing kernel Hilbert space}
(NFRKHS) if, for each $\beta \in \free$, the linear operator $\Phi_\beta \colon \cH \to \cY$ defined by
\begin{equation}  \label{Phi-beta}
  \Phi_\beta \colon f(z) = \sum_{\alpha \in \free} f_\alpha z^\alpha \mapsto f_\beta
\end{equation}
is continuous.

To explain the reproducing-kernel property for this setting, we need to introduce more general
formal inner-product pairings as follows.
Let $\overline{\zeta} = (\overline{\zeta}_1, \dots, \overline{\zeta}_d)$ be a second $d$-tuple of formal noncommuting indeterminates with properties as discussed in the previous paragraph. Let us introduce, for a general Hilbert space $\cC$, formal pairings
 \begin{align*}
& \langle \cdot, \cdot \rangle_{\cC \times \cC\langle \langle \overline{\zeta} \rangle \rangle} \to
 {\mathbb C} \langle \langle \zeta \rangle \rangle, \quad
  \langle  \cdot, \cdot \rangle_{\cC \langle \langle \zeta \rangle \rangle \times \cC}
 \to {\mathbb C}\langle \langle \zeta \rangle \rangle
 \end{align*}
by
\begin{align*}
& \Big{\langle} c, \sum_{\beta\in\free} c'_\beta \overline{\zeta}^{\beta^\top} \Big{\rangle}_{\cC \times \cC\langle \langle \overline{\zeta}
\rangle \rangle}  \!\!\! = \!\! \sum_{\beta\in\free} \langle c, c'_\beta \rangle_{_\cC} \zeta^\beta,  \quad
 \Big{\langle} \sum_{\alpha\in\free} c_\alpha \zeta^\alpha, c' \Big{\rangle}_{\cC(\zeta) \times \cC} \!\!\! = \!\!
\sum_{\alpha\in\free} \langle c_\alpha, c' \rangle_{_\cC} \zeta^\alpha.
\end{align*}
In particular, we have
$$
  \langle c, c' \overline{\zeta}^{\beta^\top} \rangle_{\cC \times \cC\langle \langle \overline{\zeta}  \rangle \rangle}
    = \langle c, c' \rangle_{_\cC} \cdot \zeta^\beta.
 $$
Now we assume that $\cH$ is a NFRKHS. With $\Phi_\beta$ as defined in \eqref{Phi-beta} we set
$$
H(z)=\sum_{\alpha\in\free} \Phi_\beta z^{\alpha}\quad\text{so that}\quad
H(\zeta)^* = \sum_{\beta\in\free} \Phi_\beta^* \overline{\zeta}^{\beta^\top}.
$$
Then for each $y \in \cY$ by definition $\Phi_\beta^* y \in \cH$ and we can write
$$
H(\zeta)^* y = \sum_{\beta\in\free} (\Phi_\beta^* y) \, \overline{\zeta}^{\beta^\top} \in \cH\langle \langle \overline{\zeta}
\rangle \rangle.
$$
For $f \in \cH$ and $y \in \cY$ we can therefore compute the pairing
\begin{align*}
\langle f, H(\zeta)^* y \rangle_{\cH \times \cH\langle \langle \overline{\zeta} \rangle \rangle}
= \sum_{\beta\in\free} \langle f, \Phi_\beta^* y \rangle_\cH \zeta^\beta
& = \sum_{\beta\in\free} \big\langle \Phi_\beta f, y \big\rangle_\cY  \zeta^\beta \\
&=
\Big{\langle}  \sum_{\beta\in\free} f_\beta  \zeta^\beta, y \Big{\rangle}_{\cY \langle \langle \zeta \rangle \rangle \times \cY}
\end{align*}
where $\Phi_\beta^* y \in \cH$ can be written more explicitly as the power series in $z$:
$$
 (\Phi_\beta^* y)(z)  = \sum_{\alpha\in\free} (\Phi_\alpha \Phi_\beta^* y) z^\alpha.
$$
If we now define $K(z, \zeta) \in \cL(\cY \langle \langle z, \overline{\zeta} \rangle \rangle$ by
$$
K(z, \zeta)  = \sum_{\alpha, \beta\in\free} \Phi_\alpha \Phi_\beta^* z^\alpha \overline{\zeta}^{\beta^\top}
= H(z) H(\zeta)^*,
$$
then we observe that $K$ has the formal reproducing property for the NFRKHS $\cH$:
\begin{itemize}

\item[(i)]  for each $y \in \cY$ we have $K(\cdot, \zeta) y \in \cH\langle \langle \overline{\zeta} \rangle \rangle$, and

\item[(ii)] for each $y \in \cY$ and $f \in \cH$ we have
$
  \langle f, K(\cdot, \zeta) y  \rangle_{\cH \times \cH\langle \langle \overline{\zeta} \rangle \rangle}
  = \langle f(\zeta), y \rangle_{\cY\langle\langle\zeta\rangle\rangle \times \cY}.$
\end{itemize}
In general, when $K$ and $\cH$ are related in this way,
we say that $K$ is the reproducing kernel for the FNRKHS $\cH$ and we write $\cH = \cH(K)$.

\begin{example}\label{E:szego}
{\rm The Fock space $H^{2}_{\cY}(\free)$ introduced in \eqref{Fock} is the NFRKHS with reproducing kernel
$k_{\rm Sz} I_{\cY}$ where $k_{\rm Sz}$ is the {\em noncommutative Szeg\H{o} kernel}
\begin{equation}
    k_{\rm Sz}(z,\zeta) =\sum_{\alpha \in \free} z^{\alpha}
    \bzeta^{\alpha^{\top}}.
\label{sz2}
\end{equation}
Indeed, for $f(z) = \sum_{\alpha \in \free}
f_{\alpha} z^{\alpha} \in H^2_{\cY}(\free)$ and $y \in \cY$, we have
\begin{align*}
    \langle f, k_{{\rm Sz}}(\cdot, \zeta) y \rangle_{H^2_{\cY}(\free) \times
    H^2_{\cY}(\free)\langle \langle \bzeta \rangle \rangle} &
= \sum_{\alpha \in \free} \langle f(z), \,  y
\, z^{\alpha} \rangle_{H^2_{\cY}(\free)} \zeta^{\alpha} \\
    & = \sum_{\alpha \in \free} \langle f_{\alpha}, y \rangle_{\cY} \zeta^{\alpha}
 = \langle f(\zeta), y \rangle_{\cY\langle \langle \zeta \rangle \rangle
    \times \cY}.
\end{align*}}
\end{example}

Given two formal positive kernels $K$ and $K'$  with coefficients in
$\cL(\cY)$ and $\cL(\cU)$ respectively, together with the associated NFRKHSs
$\cH(K)$ and $\cH(K')$, a formal power series $F\in\cL(\cU,\cY)\langle \langle z\rangle\rangle$
is called a {\em contractive multiplier from  $\cH(K')$ to $\cH(K)$}
if the left multiplication operator $M_F$ defined as in \eqref{mo} is a contraction from $\cH(K')$ to $\cH(K)$.
For details of the proof of the next result, we refer to Proposition 3.1.1 in \cite{bbbook}.

\begin{prop}
Let $K \in \cL(\cY)\langle \langle z, \bzeta \rangle \rangle$ and $K' \in \cL(\cU)\langle \langle z,
\bzeta \rangle \rangle$ be two positive formal kernels.
\begin{enumerate}
\item A formal power series
$F\in\cL(\cU,\cY)\langle \langle z\rangle\rangle$ is a contractive multiplier from $\cH(K')$ to $\cH(K)$ if and only if
$$
K_F(z,\zeta)=K(z,\zeta)-F(z)K'(z,\zeta)F(\zeta)^*\in\cL(\cY)\langle \langle z, \bzeta\rangle \rangle
$$
is a positive formal kernel.
\item $F$ is a coisometric multiplier from $\cH(K')$ to $\cH(K)$ if and only if $K_F(z,\zeta)=0$.
\end{enumerate}
\label{P:cmult}
\end{prop}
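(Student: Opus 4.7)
The plan is to adapt the standard reproducing-kernel argument to the noncommutative formal setting, with the main work being careful bookkeeping of the free-monoid indices. First I would establish the pivotal identity
$$
M_F^{*} K(\cdot,\zeta)y \;=\; K'(\cdot,\zeta)\,F(\zeta)^{*} y \;\in\; \cH(K')\langle\langle \bzeta \rangle \rangle,
$$
whenever $M_F\colon \cH(K')\to\cH(K)$ is bounded. This is verified by pairing both sides with an arbitrary $h\in\cH(K')$, using the reproducing property in $\cH(K)$, and the computation $\langle (M_Fh)(\zeta),y\rangle = \langle F(\zeta)h(\zeta),y\rangle = \langle h(\zeta),F(\zeta)^{*}y\rangle$. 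Matching coefficients via the concatenation identity $\bzeta^{\delta^\top}\bzeta^{\alpha^\top}=\bzeta^{(\alpha\delta)^\top}$, this translates at the level of the coefficient-extraction functionals $\Phi_\beta$ to
$$
M_F^{*}\,\Phi_\beta^{*} y \;=\; \sum_{\alpha\delta=\beta}\Phi_\delta^{*}\,F_\alpha^{*} y.
$$

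Using this, together with the Kolmogorov-type expansion $K(z,\zeta)=\sum_{\alpha,\beta}\Phi_\alpha\Phi_\beta^{*} z^\alpha\bzeta^{\beta^\top}$ (and its analogue for $K'$), a direct calculation yields the Gram-matrix identity
$$
\Big\langle (I - M_F M_F^{*})\sum_i \Phi_{\beta_i}^{*} y_i,\; \sum_j \Phi_{\beta_j}^{*} y_j \Big\rangle_{\cH(K)} \;=\; \sum_{i,j}\langle K_{F,\beta_j,\beta_i}\,y_i,y_j\rangle_{\cY}
$$
for any finite collection $\{(\beta_i,y_i)\}\subset\free\times\cY$. The forward direction of (1) is then immediate: if $M_F$ is contractive, the left-hand side is nonnegative, so the right-hand side is too, which is exactly positivity of $K_F$.

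For the converse direction of (1), assuming $K_F\succeq 0$, I would define an operator $V$ on the (dense) span of the kernel elements $\Phi_\beta^{*} y$ in $\cH(K)$ by the explicit formula above. The same Gram computation furnishes both the well-definedness of $V$ (apply it with the input set equal to zero) and the contractive estimate $\|V g\|_{\cH(K')}\le \|g\|_{\cH(K)}$; thus $V$ extends to a contraction $\cH(K)\to\cH(K')$. A coefficient comparison on kernel elements of $\cH(K')$ shows that $V^{*}$ acts on $\cH(K')$ exactly as formal left multiplication by $F$, so $V^{*}=M_F$ is the required contractive multiplier. Part (2) follows at once, since $M_F$ is coisometric iff $I-M_FM_F^{*}=0$, which via the displayed Gram identity and density of the span of the $\Phi_\beta^{*}y$'s in $\cH(K)$ is equivalent to $K_F\equiv 0$ as a formal kernel.

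The main obstacle is the very first step: keeping the noncommutative index calculus straight when manipulating the formal power series, because concatenation order in the $\bzeta$-variables is reversed relative to the $z$-variables. Once the identity $M_F^{*}K(\cdot,\zeta)y=K'(\cdot,\zeta)F(\zeta)^{*}y$ is in hand, the remainder is a routine positivity/density argument mirroring the classical commutative case.
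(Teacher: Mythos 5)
Your proof is correct. The paper does not give its own proof of Proposition \ref{P:cmult}, deferring instead to Proposition 3.1.1 of \cite{bbbook}; your argument is the standard reproducing-kernel template transported faithfully to the formal noncommutative setting, and the index bookkeeping you flag as the main obstacle (in particular $\delta^\top \alpha^\top = (\alpha\delta)^\top$, which yields $M_F^{*}\Phi_\beta^{*}y = \sum_{\alpha\delta=\beta}\Phi_\delta^{\prime *}F_\alpha^{*}y$ and hence the Gram identity with the $(\beta_j,\beta_i)$ coefficient of $K_F$) checks out, so both the forward implication and the construction of the contraction $V$ with $V^{*}=M_F$ go through, with part (2) following from density of the span of the $\Phi_\beta^{*}y$ in $\cH(K)$.
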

Specializing part (1) to the case where $K=k_{\rm Sz} I_{\cY}$ and $K'=k_{\rm Sz} I_{\cU}$ gives the characterization
of contractive multipliers from $\cH_\cU(k_{\rm Sz})=\cH(k_{\rm Sz} I_{\cU})=H^2_{\cU}(\free)$ to $\cH_\cY(k_{\rm Sz})=\cH(k_{\rm Sz} I_{\cY})=H^2_{\cY}(\free)$
(i.e., the elements in $\mathcal S_{{\rm nc},d}(\cU,\cY)$) in terms of the positive formal kernels:
{\em a  formal power series $S$ belongs to the
Schur class $\mathcal S_{{\rm nc},d}(\cU,\cY)$ if and only if
 \begin{equation}  \label{KS}
K_{S}(z, \zeta) : = k_{\rm Sz}(z,\zeta)I_{\cY} - S(z)( k_{\rm Sz}(z,\zeta)I_{\cU})S(\zeta)^{*}
       \end{equation}
is a positive formal kernel}.

\smallskip

One can show in much the same way as for the classical case that this FNRKHS $\cH(K_S)$ with formal
reproducing kernel \eqref{KS} can also be viewed as the de Branges-Rovnyak
space $\cH(S) =  \Ran (I_{H^2_\cY(\free)}-M_S M_S^*)^{\frac{1}{2}}$ with lifted norm
given by \eqref{1.15}.

\subsection{Indefinite noncommutative Schur class}

In order to describe the solutions to our interpolation problem ${\bf OAP}_{\cH(S_0)}$ we require Schur-class multipliers with respect to indefinite inner product spaces associated with two signature operators, which we define next. This subsection is partially a review of material from \cite{bbkats} and we refer there for further details and proofs. See the end of the introduction and Appendix \ref{S:Krein} for more on indefinite inner product spaces, signature operators and the terminology used here. We are specifically interested in the case where the signature operators have the form
\begin{equation}\label{JYU}
J_{_{\cY,\cU}}=\mat{I_\cY&0\\0& - I_\cU}\quad\mbox{or}\quad
{\bf J}_{_{\cY,\cU}}:= I_{H^2(\free)}\otimes J_{_{\cY,\cU}},
\end{equation}
for Hilbert spaces $\cY$ and $\cU$.

\begin{defn}  \label{D:1}
{\rm Given coefficient Hilbert spaces $\cU$, $\cY$, $\cF$, the {\em noncommutative indefinite Schur
class} $\cS_{{\rm nc},d}(J_{_{\cY,\cU}},J_{_{\cF,\cU}})$ consists of formal power series
$$
{\mathfrak A}(z) = \begin{bmatrix} {\mathfrak A}_{11}(z) &{\mathfrak A}_{12}(z) \\ {\mathfrak A}_{21}(z) & {\mathfrak A}_{22}(z)
\end{bmatrix}\in\cL\left(\begin{bmatrix}\cF \\ \cU\end{bmatrix}, \, \begin{bmatrix}\cY \\ \cU\end{bmatrix}\right)\langle\langle z\rangle\rangle
$$
such that the multiplication operator
$$
M_{\mathfrak A}: \,  (H^2_{\cF\oplus\cU}(\free),J_{_{\cF,\cU}})\to (H^2_{\cY\oplus\cU}(\free),J_{_{\cY,\cU}})
$$
is a $({\bf J}_{_{\cF,\cU}},{\bf J}_{_{\cY,\cU}})$-bi-contraction:
\begin{equation}\label{biconSchur}
M_{\mathfrak A}^*{\bf J}_{_{\cY,\cU}}M_{\mathfrak A}\preceq {\bf J}_{_{\cF,\cU}}\quad\mbox{and}\quad
M_{\mathfrak A}{\bf J}_{_{\cF,\cU}}M_{\mathfrak A}^*\preceq {\bf J}_{_{\cY,\cU}}.
\end{equation}}
\end{defn}

As the action of the operator
$M^*_{\mathfrak A}: \, H^2_{\cY\oplus\cU}(\free)\to H^2_{\cF\oplus\cU}(\free)$
on the kernel elements $k_{\rm Sz}(\cdot,\zeta)\sbm {y \\ u}$, for $y\in\cY$, $u\in\cU$, is given by
the formula
$$
M^*_{{\mathfrak A}}k_{\rm Sz}(\cdot,\zeta)\sbm {y \\ u}=k_{\rm Sz}(\cdot,\zeta)\mathfrak A(\zeta)^*\sbm {y \\ u},
$$
(see the computation leading up to formula (3.3) in \cite{bbbook}),
it follows that
\begin{align*}
{\bf J}_{_{\cY,\cU}}-M_{\mathfrak A}{\bf J}_{_{\cF,\cU}}M_{\mathfrak A}^*: \, k_{\rm Sz}(\cdot,\zeta)\sbm {y \\ u}
\mapsto & k_{\rm Sz}(\cdot,\zeta) J_{_{\cY,\cU}}\sbm {y \\ u}\notag\\
&\quad -{\mathfrak A}(\cdot)k_{\rm Sz}(\cdot,\zeta) J_{_{\cF,\cU}}
{\mathfrak A}(\zeta)^{*} \sbm {y \\ u}.\notag
\end{align*}
By linearity, the latter formula extends to linear combinations of kernel elements. Since the span of the
kernel elements is dense in $H^2_{\cY \oplus \cU}(\free)$,  we then see that
the second condition in \eqref{biconSchur} (i.e.,
${\bf J}_{_{\cY,\cU}}-M_{\mathfrak A}{\bf J}_{_{\cF,\cU}}M_{\mathfrak A}^*\succeq 0$)
is equivalent to the formal kernel
        \begin{equation}  \label{jKS}
        K^{J_{_{\cF,\cU}},J_{_{\cY,\cU}}}_{{\mathfrak A}}(z, \zeta): =
 k_{\rm Sz}(z,\zeta)J_{_{\cY,\cU}} -{\mathfrak A}(z)( k_{\rm Sz}(z,\zeta)J_{_{\cF,\cU}})
{\mathfrak A}(\zeta)^{*}
        \end{equation}
being positive. Therefore,  the associated de Branges-Rovnyak space
$$
\cH(K^{J_{_{\cF,\cU}},J_{_{\cY,\cU}}}_{{\mathfrak A}})
$$
is a Hilbert space for any formal power series $\mathfrak A$ satisfying just the second condition in \eqref{biconSchur}.

To handle the first condition in \eqref{biconSchur}, we use a duality trick as follows.
To test for positivity of the operator $\bJ_{_{\cF, \cU}} -  M_\fA^* \bJ_{_{\cY, \cU}} M_\fA$, we let it act from the right on the space
$H^{2}_{\cF' \oplus \cU'}(\free)$  consisting of formal power series with coefficients being vectors in the space $\cF' \oplus \cU'$
of linear functionals acting on $\cF \oplus \cU$ and resulting values in $H^2_{\cY' \oplus \cU'}$.
\footnote{This construct of letting an operator-valued function have both a right action on dual (row) vectors as
well as a left action on (column) vectors comes up in the duality pairing between dual vector bundles used for the construction
of an analogue of the Cauchy kernel for a vector bundle over an algebraic curve
with a given determinantal representation  (see \cite[Proposition 2.2]{BV96} and \cite[formula (5.1)]{BV99}). }
Again the operator $M_\fA^*$
can be computed when acting on the right
on a kernel element $k_{\rm Sz}( \cdot, \zeta) \sbm{ f' & u'}$, namely
$$
k_{\rm Sz} (\cdot, \zeta) \begin{bmatrix} f' & u' \end{bmatrix} M_{\fA}^* = k_{\rm Sz} (\cdot, \zeta)
\begin{bmatrix}  f' & u' \end{bmatrix} \fA(\zeta)^*.
$$
Hence we  can  compute the operator explicitly on these dual kernel functions:
\begin{align*}
& k_{\rm Sz} (\cdot, \zeta) \begin{bmatrix}  f' & u' \end{bmatrix} (\bJ_{_{\cF, \cU}}-  M_\fA^* \bJ_{_{\cY, \cU}} M_\fA)=\\
&\qquad\qquad= k_{\rm Sz} (\cdot, \zeta)  \begin{bmatrix}  f' & u' \end{bmatrix} J_{_{\cF, \cU}} -
 k_{\rm Sz}(\cdot, \zeta) \begin{bmatrix}  f' & u' \end{bmatrix} \fA(\zeta)^* J_{_{\cY, \cU}} \fA( \cdot)\\
&\qquad\qquad= \begin{bmatrix}  f' & u' \end{bmatrix} \left( k_{\rm Sz} (\cdot, \zeta)   J_{_{\cF, \cU}} -
   \fA(\zeta)^* k_{\rm Sz}(\cdot, \zeta) J_{_{\cY, \cU}} \fA( \cdot)\right).
\end{align*}
By similar arguments as for the second condition in \eqref{biconSchur} it now follows that the operator-positivity condition $\bJ_{_{\cF, \cU}} -  M_\fA^* \bJ_{_{\cY, \cU}} M_\fA \succeq 0$ is equivalent to the kernel
\begin{equation}   \label{jKS-dual}
\widetilde K_\fA^{J_{_{\cY, \cU}}, J_{_{\cF, \cU}}}(\zeta, z) =
k_{\rm Sz}(z, \zeta) J_{_{\cF, \cU}} - \fA(\zeta)^* (k_{\rm Sz}(z, \zeta) J_{_{\cY, \cU}}) \fA(z)
\end{equation}
being positive. In conclusion, we have derived the following result.

\begin{prop}\label{P:IdefSchurKernelChar}
A given power series $\fA(z) \in \cL(\cF \oplus \cU, \cY \oplus \cU)\langle \langle z, \overline{\zeta} \rangle\rangle$ is in the indefinite Schur class $\cS_{J_{_{\cY, \cU}}, J_{_{\cF, \cU}}}$ if and only if both formal kernels \eqref{jKS} and \eqref{jKS-dual} are positive.
\end{prop}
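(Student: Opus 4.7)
The plan is essentially to assemble the two equivalences already sketched in the paragraphs preceding the statement. The bi-contractivity condition \eqref{biconSchur} is by definition the conjunction of two operator inequalities, and I would argue separately that each one is equivalent to positivity of one of the two formal kernels in the conclusion.

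For the second inequality $\bJ_{_{\cY,\cU}}-M_\fA\bJ_{_{\cF,\cU}}M_\fA^*\succeq 0$, I would follow the calculation recalled above: apply this operator to a reproducing-kernel element $k_{\rm Sz}(\cdot,\zeta)\sbm{y\\u}$ using the identity
$$
M_\fA^* k_{\rm Sz}(\cdot,\zeta)\sbm{y\\u}=k_{\rm Sz}(\cdot,\zeta)\fA(\zeta)^*\sbm{y\\u},
$$
and then pair against a second kernel element $k_{\rm Sz}(\cdot,\omega)\sbm{y'\\u'}$. By the reproducing property of $k_{\rm Sz}I_{\cY\oplus\cU}$, the Gram matrix over any finite family of such kernel data is exactly the Gram matrix of $K_\fA^{J_{_{\cF,\cU}},J_{_{\cY,\cU}}}$ in \eqref{jKS} at the chosen points. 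Since finite linear combinations of kernel elements are dense in $H^2_{\cY\oplus\cU}(\free)$, operator positivity on the Fock space is equivalent to positivity of every such Gram matrix, hence to positivity of \eqref{jKS}.

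For the first inequality $\bJ_{_{\cF,\cU}}-M_\fA^*\bJ_{_{\cY,\cU}}M_\fA\succeq 0$ a direct repetition of the previous argument is awkward because the natural clean kernel-element identity is for $M_\fA^*$ rather than $M_\fA$. I would instead use the duality device indicated above: regard $M_\fA^*$ as acting by right multiplication on dual-vector-valued kernel elements $k_{\rm Sz}(\cdot,\zeta)[f'\;u']$ with values in $\cY'\oplus\cU'$, via $k_{\rm Sz}(\cdot,\zeta)[f'\;u']\fA(\zeta)^*$, and redo the Gram-matrix computation in that dual setting. The outcome is exactly the formal kernel in \eqref{jKS-dual}, and operator positivity of $\bJ_{_{\cF,\cU}}-M_\fA^*\bJ_{_{\cY,\cU}}M_\fA$ translates into positivity of $\widetilde K_\fA^{J_{_{\cY,\cU}},J_{_{\cF,\cU}}}$.

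The main obstacle to writing this out in full detail is the duality step for the first inequality: one must set up carefully the right-action of operators on dual-vector-valued power series and verify that testing against such dual kernel elements genuinely detects positivity of the operator on $H^2_{\cF\oplus\cU}(\free)$. Once that formalism is secured, combining the two equivalences yields the proposition immediately.
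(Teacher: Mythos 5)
Your proposal follows exactly the route the paper takes: the derivation in the text preceding the proposition statement is precisely what you describe, with the second inequality in \eqref{biconSchur} handled by testing $\bJ_{_{\cY,\cU}}-M_\fA\bJ_{_{\cF,\cU}}M_\fA^*$ against left kernel elements (yielding \eqref{jKS}), and the first inequality handled by the dual right-action trick on $k_{\rm Sz}(\cdot,\zeta)\begin{bmatrix}f'&u'\end{bmatrix}$ (yielding \eqref{jKS-dual}). Your remark that the right-action/duality step is the part that needs the most care is also in line with the paper, which only sketches it and defers to the analogy with the left-module case.
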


\subsection{Linear-fractional maps associated with indefinite noncommutative Schur-class multipliers}
\label{S:LFTnc}

We now describe how the indefinite noncommutative Schur-class multipliers described in the previous section can be used to
define linear-fractional maps acting on a standard Schur class of multipliers. In the finite-dimensional, one-variable case, many of
the arguments given here can also be found in \cite{AD08}, while for the infinite-dimensional case some pieces of these results
can also be derived by applying the Potapov-Ginsburg transform to derive the corresponding results in the Redheffer
(scattering) formalism (see \cite[Theorem 1.3.4]{drrov} and \cite[Theorem 3.4]{bbkats}).

We start in a slightly more general setting. Given a block $2 \times 2$ matrix formal power series
\begin{equation}\label{fA-LFT}
\fA(z)  = \begin{bmatrix} \fA_{11}(z) & \fA_{12}(z) \\
\fA_{21}(z) & \fA_{22}(z) \end{bmatrix}\in\cL\left(\mat{\cF\\ \cU},\mat{\cY\\ \cU}\right)\langle\langle z\rangle\rangle
\end{equation}
and a formal power series $\cE(z)\in \cL(\cU,\cF)\langle\langle z\rangle\rangle$,  let us say that $\cE$ is in the domain of the linear fractional map $T_\fA$ if it is the case that $\fA_{21}(z) \cE(z) + \fA_{22}(z) $ is invertible as a formal power series, i.e., its coefficient with index $\emptyset$ is invertible in $\cL(\cU)$, in which case we define $\fT_\fA[\cE]$ to be the formal power series given by
$$
\fT_\fA[\cE] = (\fA_{11} \cE + \fA_{12}) (\fA_{21} \cE + \fA_{22})^{-1}\in \cL(\cU,\cY)\langle\langle z\rangle\rangle.
$$
For this discussion the following definition will be useful.

\begin{defn}  \label{D:domLFT}
Let us say that the formal power series $\cE$ is in the {\em formal domain of $\fT_\fA$} if at least $\fA_{21} \cE + \fA_{22}$ is invertible as a formal power series, thereby making  $\fT_\fA[\cE]$ well defined as a formal power series. In case $\fA$ is a multiplier,  we say that $\cE$ is in the {\em multiplier domain of $\fT_\fA$} if $\cE$ is a multiplier and $\fA_{21} \cE + \fA_{22}$ is invertible as a multiplier on $H^2_\cU(\free)$.
 \end{defn}

Note that  for the case where $\fA$ is a multiplier, the multiplier domain of $\fT_\fA$ is contained in its formal domain,
since any invertible multiplier is  the multiplier arising from an invertible formal series.

In case $\fA$ is an indefinite noncommutative Schur-class multiplier we get the following result.

\begin{thm} \label{T:lfts}
If $\mathfrak A\in \cS_{{\rm nc},d}(J_{_{\cY,\cU}},J_{_{\cF,\cU}})$ and
${\mathcal E}\in \cS_{{\rm nc},d}(\cU,\cF)$, then the linear fractional transform of $\cE$
\begin{equation}
         \fT_{{\mathfrak A}}[{\mathcal E}]= ({\mathfrak A}_{11}{\mathcal E} +
         {\mathfrak A}_{12}) ( {\mathfrak A}_{21} {\mathcal E} +{\mathfrak A}_{22})^{-1}
\label{lfm}
\end{equation}
is a well-defined element of $\cS_{{\rm nc},d}(\cU,\cY)$.  In particular, if $\fA \in \cS_{{\rm nc},d}(J_{_{\cY,\cU}},J_{_{\cF,\cU}})$
and $\cE \in \cS_{{\rm nc},d}(\cU,\cF)$, then $\cE$ is in the multiplier domain of $\fT_\fA$.
\end{thm}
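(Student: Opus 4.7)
The plan is to invoke the Kre\u{\i}n-space lemma on linear-fractional transformations of bi-contractions from Appendix \ref{S:Krein}, applied to $M_\fA$ regarded as a bi-contraction between the Kre\u{\i}n spaces $(H^2_{\cF\oplus\cU}(\free),\bJ_{\cF,\cU})$ and $(H^2_{\cY\oplus\cU}(\free),\bJ_{\cY,\cU})$, and then to lift the resulting Hilbert-space statements to the multiplier level via commutation with the right-shift tuple ${\bf R}_z$ using \eqref{CharMult}.

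First I would observe that the signature operators $\bJ_{\cF,\cU}$ and $\bJ_{\cY,\cU}$ equip the Fock spaces $H^2_{\cF\oplus\cU}(\free)$ and $H^2_{\cY\oplus\cU}(\free)$ with Kre\u{\i}n-space structures whose fundamental decompositions $H^2_\cF(\free)\oplus H^2_\cU(\free)$ and $H^2_\cY(\free)\oplus H^2_\cU(\free)$ are precisely the block decompositions appearing in $M_\fA$. The bi-contractivity hypothesis \eqref{biconSchur} then translates verbatim into the statement that $M_\fA$ is a $(\bJ_{\cF,\cU},\bJ_{\cY,\cU})$-bi-contraction, while for $\cE\in\cS_{{\rm nc},d}(\cU,\cF)$ the operator $M_\cE\colon H^2_\cU(\free)\to H^2_\cF(\free)$ is an ordinary Hilbert-space contraction, that is, exactly the angular parameter accepted by the Kre\u{\i}n-space lemma.

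The lemma then delivers two conclusions simultaneously: the operator $M_{\fA_{21}\cE+\fA_{22}}=M_{\fA_{21}}M_\cE+M_{\fA_{22}}$ is invertible as a bounded operator on $H^2_\cU(\free)$, and the composite
\[
(M_{\fA_{11}\cE+\fA_{12}})(M_{\fA_{21}\cE+\fA_{22}})^{-1}\colon H^2_\cU(\free)\to H^2_\cY(\free)
\]
is a Hilbert-space contraction. To upgrade to the multiplier level, I would note that $M_{\fA_{21}\cE+\fA_{22}}$ commutes with each $R_{z_j}\otimes I_\cU$ by \eqref{CharMult}, so its bounded inverse does as well, and by \eqref{CharMult} again the inverse must be multiplication by some formal power series $\mathfrak C\in\cL(\cU)\langle\langle z\rangle\rangle$; the identity $M_\mathfrak C M_{\fA_{21}\cE+\fA_{22}}=I$ then forces $\mathfrak C$ to agree with the formal inverse $(\fA_{21}\cE+\fA_{22})^{-1}$. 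Hence $\cE$ lies in the multiplier domain of $\fT_\fA$, and $M_{\fT_\fA[\cE]}=M_{\fA_{11}\cE+\fA_{12}}M_\mathfrak C$ coincides with the contraction above; a final appeal to \eqref{CharMult} shows that it too is a multiplier, placing $\fT_\fA[\cE]$ in $\cS_{{\rm nc},d}(\cU,\cY)$.

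The main obstacle lies in the Kre\u{\i}n-space lemma itself, specifically in establishing invertibility of $M_{\fA_{21}\cE+\fA_{22}}$: a direct use of the first bi-contractivity applied to $\sbm{M_\cE u\\ u}$ gives only
\[
\|M_{\fA_{21}\cE+\fA_{22}}u\|^2\ge\|u\|^2-\|M_\cE u\|^2,
\]
a bound that degenerates when $\|M_\cE\|=1$. The remedy is to use both inequalities in \eqref{biconSchur}: their $(2,2)$-blocks give $M_{\fA_{22}}^*M_{\fA_{22}}\succeq I+M_{\fA_{12}}^*M_{\fA_{12}}$ and $M_{\fA_{22}}M_{\fA_{22}}^*\succeq I+M_{\fA_{21}}M_{\fA_{21}}^*$, which together force $M_{\fA_{22}}$ to be invertible with $\|M_{\fA_{22}}^{-1}\|\le 1$ and permit the Potapov-Ginsburg rearrangement producing a bona fide Schur multiplier $\Sigma$ with $\Sigma_{22}=\fA_{22}^{-1}$; re-expressing $\fT_\fA[\cE]$ as the associated Redheffer transform of $\cE$ by $\Sigma$ then reduces everything to the standard contractive-multiplier machinery.
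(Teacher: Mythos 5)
Your proposal misidentifies the tool in Appendix \ref{S:Krein}. The only lemma there is Lemma \ref{L:findBD}, which concerns completing a Kre\u{\i}n-space isometry $V$ to a Kre\u{\i}n-space unitary row operator $\begin{bmatrix} V & W\end{bmatrix}$; there is no lemma in that appendix about linear-fractional transformations of bi-contractions, so the proof you propose to invoke does not exist in the paper. The paper's actual argument is a direct, self-contained operator-theoretic one: use the $(2,2)$-blocks of the two inequalities in \eqref{biconSchur} to show $M_{\fA_{22}}$ is surjective and injective, conclude $M_{\fA_{22}}^{-1}=M_{\fA_{22}^{-1}}$ is a bounded multiplier, then \emph{conjugate} the inequality $I+M_{\fA_{21}}M_{\fA_{21}}^*\preceq M_{\fA_{22}}M_{\fA_{22}}^*$ by $M_{\fA_{22}}^{-1}$ to obtain
$M_{\fA_{22}^{-1}\fA_{21}}M_{\fA_{22}^{-1}\fA_{21}}^*\preceq I-M_{\fA_{22}^{-1}}M_{\fA_{22}^{-1}}^*$, which because $M_{\fA_{22}^{-1}}M_{\fA_{22}^{-1}}^*$ is boundedly invertible gives the strict estimate $\|M_{\fA_{22}^{-1}\fA_{21}}\|<1$. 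A Neumann series then inverts $M_{\fA_{22}+\fA_{21}\cE}$ for any contractive $\cE$, and a quadratic-form computation with $\begin{bmatrix} M_\cE^* & I\end{bmatrix}$ applied to the first inequality of \eqref{biconSchur} proves the contractivity of $\fT_\fA[\cE]$.

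Your remedy paragraph does recover the central idea (use both $(2,2)$-block inequalities to invert $M_{\fA_{22}}$), but it stops short. You establish only $\|M_{\fA_{22}}^{-1}\|\le 1$, a non-strict bound, and then defer to a Potapov–Ginsburg/Redheffer reduction. The strict inequality $\|M_{\fA_{22}^{-1}\fA_{21}}\|<1$ is the point of the whole argument — it is what saves you when $\|M_\cE\|=1$ — and it does not follow from $\|M_{\fA_{22}}^{-1}\|\le 1$ alone; it requires the conjugation step. Your Potapov–Ginsburg sketch also has an error: the entry $\Sigma_{22}$ of the Redheffer form $\Sigma_{11}+\Sigma_{12}\cE(I-\Sigma_{22}\cE)^{-1}\Sigma_{21}$ must map $\cF$ to $\cU$ so that $\Sigma_{22}\cE$ is defined; $\fA_{22}^{-1}\in\cL(\cU)$ is not such a map. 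The correct entry is (up to sign) $\fA_{22}^{-1}\fA_{21}$, which is exactly the multiplier the paper shows to be a strict contraction. The Potapov–Ginsburg route is mentioned by the authors as viable (with references to \cite{drrov} and \cite{bbkats}), but if you take it you still have to prove that the transform of a $J$-bi-contractive multiplier is a contractive multiplier with $\|\Sigma_{22}\|<1$, which amounts to the same strict estimate.
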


\begin{proof}
A consequence of the second relation in \eqref{biconSchur} is that
\begin{equation}   \label{ineq1}
   M_{\fA_{21}} M_{\fA_{21}}^* - M_{\fA_{22}} M_{\fA_{22}}^* \preceq -I_{H^2_\cU(\free)}
 \end{equation}
which we can rearrange to
$$
 I_{H^2_\cU(\free)} \preceq I_{H^2_\cU(\free)} + M_{\fA_{21}} M_{\fA_{21}}^* \preceq M_{\fA_{22}} M_{\fA_{22}}^*
 $$
from which we conclude that $M_{\fA_{22}}$ is surjective.  From the first relation in \eqref{biconSchur} we get that
$$
M_{\fA_{12}}^* M_{\fA_{12}} - M_{\fA_{22}}^* M_{\fA_{22}} \preceq - I_{H^2_\cU(\free)}
$$
which by parallel rearrangements as in the preceding argument leads to
$$
I_{H^2_\cU(\free)} \preceq I_{H^2_\cU(\free)} + M_{\fA_{12}}^* M_{\fA_{12}}  \preceq M_{\fA_{22}}^* M_{\fA_{22}}
$$
leading to the conclusion that $M_{\fA_{22}}$ is also injective.  We conclude that when both inequalities in \eqref{biconSchur} hold,
it follows that $M_{\fA_{22}}$ is bijective with bounded inverse $M_{\fA_{22}}^{-1}$ given by
$M_{\fA_{22}}^{-1} = M_{\fA_{22}^{-1}}$ (due to the characterization \eqref{CharMult} of multiplier operators).
Conjugating \eqref{ineq1} by $M_{\fA_{22}}^{-1}$ then gives us
 $$
 M_{\fA_{22}^{-1} \fA_{21}} (M_{\fA_{22}^{-1} \fA_{21}} )^* + M_{\fA_{22}^{-1}} (M_{\fA_{22}^{-1}})^*
 \preceq I_{H^2_\cU(\free)}
 $$
 from which we  conclude that
$\|M_{{\mathfrak A}_{22}^{-1}{\mathfrak A}_{21}}\|<1$ and hence also
$$
\| M_{{\mathfrak A}_{22}^{-1}{\mathfrak A}_{21}\cE} \|<1\quad\mbox{for any}\quad \cE\in\cS_{{\rm nc},d}(\cU,\cF).
$$
Then
\[
I_{H^2_\cU(\free)}+M_{{\mathfrak A}_{22}^{-1}{\mathfrak A}_{21}\cE}=M_{I_\cU+{\mathfrak A}_{22}^{-1}{\mathfrak A}_{21}\cE}
=M_{{\mathfrak A}_{22}^{-1}} M_{{\mathfrak A}_{22}+{\mathfrak A}_{21}\cE}
\]
is invertible on $H^2_\cU(\free)$, so that $M_{{\mathfrak A}_{22}+{\mathfrak A}_{21}\cE}$ is also
invertible on $H^2_\cU(\free)$. Hence ${\mathfrak A}_{22}+{\mathfrak A}_{21}\cE$ invertible as a multiplier on $H^2_\cU(\free)$,
making $\cE$ in the multiplier domain of $\fT_\fA$. Since a formal power series is invertible as a formal power series whenever it is
invertible as a multiplier, it follows that $\cE$ is also in the formal domain.

This proves that any $\cE$ in the Schur class $\cS_{{\rm nc}, d}(\cU, \cF)$ is in the multiplier domain for the linear-fractional map $\fT_\fA$
and hence $\fT_\fA[\cE]$ is a bounded multiplier from $H^2_\cU(\free)$ to $H^2_\cY(\free)$.
To see that
$\fT_\fA[\cE]$ is in $\cS_{{\rm nc},d}(\cU,\cY)$, we show that $\fT_\fA[\cE]$ so defined has multiplier norm at most $1$.
To this end, multiply the first inequality of \eqref{biconSchur} with $\mat{M_\cE^* & I}$ from the left and with $\mat{M_\cE^* & I}^*$
from the right to see that
\begin{align*}
\mat{M_\cE^* & I} M_\fA^* {\bf J}_{_{\cY,\cU}} M_\fA \mat{ M_\cE\\ I}  & \preceq \mat{M_\cE^* & I} {\bf J}_{_{\cF,\cU}} \mat{ M_\cE\\ I}\\
&= M_\cE^* M_\cE -I\preceq 0.
\end{align*}
By writing out $M_\fA$ as a $2 \times 2$ block-multiplier, we see that
\begin{align*}
0 &\succeq \mat{M_{\fA_{11}\cE+\fA_{12}}^* & M_{\fA_{21}\cE+\fA_{22}}^*}  {\bf J}_{_{\cY,\cU}} \mat{M_{\fA_{11}\cE+\fA_{12}}\\ M_{\fA_{21}\cE+\fA_{22}}}\\
&=M_{\fA_{11}\cE+\fA_{12}}^* M_{\fA_{11}\cE+\fA_{12}} -  M_{\fA_{21}\cE+\fA_{22}}^*  M_{\fA_{21}\cE+\fA_{22}}\\
&=M_{\fA_{21}\cE+\fA_{22}}(M_{\fT_\fA[\cE]}^*M_{\fT_\fA[\cE]}-I)M_{\fA_{21}\cE+\fA_{22}}.
\end{align*}
Since $M_{\fA_{21}\cE+\fA_{22}}$ is invertible, it follows that $M_{\fT_\fA[\cE]}^*M_{\fT_\fA[\cE]}-I\preceq 0$, so that
$\|M_{\fT_\fA[\cE]}\|\leq 1$ and thus $\fT_\fA[\cE]$ is in  $\cS_{{\rm nc},d}(\cU,\cY)$, as claimed.
\end{proof}

\subsubsection{Injectivity (or lack thereof) of linear-fractional maps}
An important feature used in the solution to the one-variable interpolation problem of Section \ref{S:SingleVar} is that, under certain conditions, in the relation $S=\fT_\fA[\cE]$ the Schur-class multiplier $\cE$ is uniquely determined by $S$, that is, the linear fractional map $\fT_\fA$ is injective (see e.g.~\cite[Section 4.1]{bbt3}).
 We now investigate at least one scenario when this injectivity property holds for $\fT_\fA$ in the multivariable setting of the present section, starting with the general setting where $\fA$ is as in \eqref{fA-LFT}.

\begin{thm}\label{T:LFTinjective}
Let $\fA$ be a formal power series as in \eqref{fA-LFT} and assume  that $\fA_{22}$ is invertible as a formal power series.
For $\cE$  in the formal domain of $\fT_\fA$,  let $S$ be the formal power series given by $S = \fT_\fA[\cE]$.
Assume also that $\fA$ is invertible as a formal power series, or equivalently (via Schur complement theory), that $\fA_{11} -
\fA_{12} \fA_{22}^{-1} \fA_{21}$ is invertible as a formal power series.  Then
$$
 \fA_{11} - S \fA_{21} = ( \fA_{11} - \fA_{12} \fA_{22}^{-1} \fA_{21}) (I + \cE \fA_{22}^{-1} \fA_{21})^{-1}
$$
is invertible as a formal power series and we recover the formal power series $\cE$ from the formal power series $S$
according to the formula
\begin{equation}  \label{recoverE}
\cE = (\fA_{11} - S \fA_{21})^{-1} ( S \fA_{22} - \fA_{12}).
\end{equation}
\end{thm}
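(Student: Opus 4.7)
The plan is to exploit the defining relation $S = \fT_\fA[\cE]$ as a purely algebraic identity between formal power series and then carry out the computations in the noncommutative polynomial ring, being careful that each product is composable. Multiplying out $S(\fA_{21}\cE + \fA_{22}) = \fA_{11}\cE + \fA_{12}$ (which is legitimate because $\cE$ lies in the formal domain of $\fT_\fA$, so $\fA_{21}\cE + \fA_{22}$ is invertible as a formal power series) and regrouping gives the key identity
\begin{equation}\label{plan-star}
(\fA_{11} - S\fA_{21})\,\cE \;=\; S\fA_{22} - \fA_{12}. \tag{$\ast$}
\end{equation}
So once I can show that $\fA_{11}-S\fA_{21}$ is invertible as a formal power series, the recovery formula \eqref{recoverE} follows immediately by left-multiplying \eqref{plan-star} by $(\fA_{11}-S\fA_{21})^{-1}$.

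Next I would prove the stated factorization by direct computation. Expanding
$$
(\fA_{11} - S\fA_{21})(I + \cE\fA_{22}^{-1}\fA_{21})
= \fA_{11} + \fA_{11}\cE\fA_{22}^{-1}\fA_{21} - S\fA_{21} - S\fA_{21}\cE\fA_{22}^{-1}\fA_{21},
$$
and substituting $\fA_{11}\cE = S\fA_{21}\cE + S\fA_{22} - \fA_{12}$ from \eqref{plan-star} into the middle term, the $S\fA_{21}\cE\fA_{22}^{-1}\fA_{21}$ and the $S\fA_{21}$ contributions cancel, leaving exactly the Schur complement $\fA_{11} - \fA_{12}\fA_{22}^{-1}\fA_{21}$. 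This yields
$$
(\fA_{11} - S\fA_{21})(I + \cE\fA_{22}^{-1}\fA_{21}) = \fA_{11} - \fA_{12}\fA_{22}^{-1}\fA_{21}.
$$

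The remaining point is to verify that the right factor $I + \cE\fA_{22}^{-1}\fA_{21}$ is invertible as a formal power series. Because $\cE$ is in the formal domain of $\fT_\fA$, the series $\fA_{21}\cE + \fA_{22}$ is invertible, hence so is $I + \fA_{22}^{-1}\fA_{21}\cE = \fA_{22}^{-1}(\fA_{21}\cE + \fA_{22})$. The standard ``$I+AB$ invertible iff $I+BA$ invertible'' identity (with $A = \cE$, $B = \fA_{22}^{-1}\fA_{21}$; the inverse is $I - B(I+AB)^{-1}A$ and applies at the level of formal power series since invertibility reduces to invertibility of the constant coefficient) then implies that $I + \cE\fA_{22}^{-1}\fA_{21}$ is invertible. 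Combined with the hypothesis that the Schur complement $\fA_{11} - \fA_{12}\fA_{22}^{-1}\fA_{21}$ is invertible, the factorization displayed above shows that $\fA_{11} - S\fA_{21}$ is invertible and equals $(\fA_{11} - \fA_{12}\fA_{22}^{-1}\fA_{21})(I + \cE\fA_{22}^{-1}\fA_{21})^{-1}$, as claimed.

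The proof is entirely manipulative; the only potential pitfall is bookkeeping the noncommutative order of the factors (the various blocks of $\fA$, together with $\cE$, act between different coefficient spaces $\cU$, $\cF$, $\cY$, so there is essentially only one composable ordering at each step), and invoking the $I+AB \leftrightarrow I+BA$ trick in the formal power series ring rather than in any operator algebra — the Schur-class or multiplier hypotheses on $\fA$ and $\cE$ play no role here beyond the formal invertibility already extracted from the definition of the formal domain.
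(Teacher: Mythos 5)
Your proof is correct and follows essentially the same route as the paper's: derive the rearranged identity $(\fA_{11} - S\fA_{21})\cE = S\fA_{22} - \fA_{12}$, establish the factorization $(\fA_{11}-S\fA_{21})(I + \cE\fA_{22}^{-1}\fA_{21}) = \fA_{11} - \fA_{12}\fA_{22}^{-1}\fA_{21}$, and use the $I+AB \leftrightarrow I+BA$ trick for the invertibility of the right factor. The only cosmetic difference is in how the factorization is verified: you expand $(\fA_{11}-S\fA_{21})(I + \cE\fA_{22}^{-1}\fA_{21})$ and substitute the rearranged identity to make the $S$-terms cancel, whereas the paper substitutes $S=\fT_\fA[\cE]$ into $\fA_{11}-S\fA_{21}$ and telescopes; these are algebraically equivalent, and you are if anything a bit more explicit about why $I + \cE\fA_{22}^{-1}\fA_{21}$ (as opposed to $I + \fA_{22}^{-1}\fA_{21}\cE$, which is what the formal-domain hypothesis directly gives) is invertible.
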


 \begin{proof}
Let us suppose that $\cE$ is in the formal domain of $\fT_\fA$, set $S:= \fT_\fA[\cE]$ and try to solve for $\cE$ in terms of $S$.
From the definition of $\fT_\fA[\cE]$ we find that
$$
 S(\fA_{21} \cE + \fA_{22}) = \fA_{11} \cE + \fA_{12}
$$
which we rearrange as
$$
(\fA_{11} - S \fA_{21}) \cE = S \fA_{22} - \fA_{12}.
$$
If $\fA_{11} - S \fA_{21}$ is invertible, then we can solve for $\cE$:
$$
 \cE = ( \fA_{11} - S \fA_{21})^{-1} ( S \fA_{22} - \fA_{12})
$$
and we have a formula for the inverse linear-fractional map.  Let us investigate when is $\fA_{11} - S \fA_{21}$ invertible under the assumption that $S = \fT_\fA[\cE]$ and that $\fA_{22}$ is invertible, as we have in the indefinite Schur-class multiplier case. Then we have
\begin{align*}
 \fA_{11} - S \fA_{21} &  = \fA_{11} - ( \fA_{11} \cE + \fA_{12}) (\fA_{21} \cE + \fA_{22})^{-1} \fA_{21} \\
 & = \fA_{11} - (\fA_{11} \cE + \fA_{12}) (I + \fA_{22}^{-1} \fA_{21}\cE)^{-1} \fA_{22}^{-1} \fA_{21}  \\
 & = \fA_{11} - (\fA_{11} \cE + \fA_{12}) \fA_{22}^{-1} \fA_{21} (I + \cE \fA_{22}^{-1} \fA_{21})^{-1} \\
 & = [ \fA_{11} + \fA_{11} \cE \fA_{22}^{-1} \fA_{21} - \fA_{11} \cE \fA_{22}^{-1} \fA_{21}- \fA_{12} \fA_{22}^{-1} \fA_{21} ]
 (I + \cE \fA_{22} \fA_{21})^{-1}   \\
 & = (\fA_{11} - \fA_{12} \fA_{22}^{-1} \fA_{21}) (I + \cE \fA_{22}^{-1} \fA_{21})^{-1}.
\end{align*}
We have that  $I + \cE \fA_{22}^{-1} \fA_{21}$  is indeed formal-power-series invertible, since $\cE$ is in the formal
domain of $\fT_\fA$. It thus follows that $\fA_{11} - S \fA_{21}$ is formal-power-series invertible if and only if the
Schur complement of $\fA$ with respect to
$\fA_{22}$, namely, $\fA_{11} - \fA_{12} \fA_{22}^{-1} \fA_{21}$, is invertible. By standard Schur complement theory, the latter is equivalent to $\fA$ being formal-power-series invertible.
\end{proof}

\begin{rem}  \label{R:LFTinjective}
{\em Let us now suppose that $\fA$ as in \eqref{fA-LFT} is a multiplier and $\cE$ is in the multiplier domain of $\fT_\fA$
(in the sense of Definition \ref{D:domLFT}).  Assume that the multipliers $\fA$ and $\fA_{22}$ are at least formal-power-series
invertible (and thus also the Schur complement $\fA_{11} - \fA_{12} \fA_{22}^{-1} \fA_{21}$ is formal-power-series invertible).
Let us assume that $\cE$ is in the multiplier domain of $\fT_\fA$, so $S = \fT_\fA[\cE]$ is well-defined as a multiplier.
As any multiplier can be viewed as arising from a formal power series, Theorem \ref{T:LFTinjective} applies to this situation
and we conclude that we can recover $\cE$ from $S$ according to the formula \eqref{recoverE}.  From inspection of the formula we only
see that $\cE$ is a formal power series despite the fact that $\cE$ was originally chosen to be a multiplier.
}\end{rem}

Simple examples show that the invertibility assumption in  Theorem \ref{T:LFTinjective} is too strong in general  for getting
injectivity results for the associated  linear-fractional map, as illustrated in the next example

\begin{ex}  \label{E:nc-inj}
The degenerate case where one takes the input space $\cU$ appearing in the definition \eqref{fA-LFT} for the symbol
$\fA$ for the linear-fractional map $\fT_\fA$ to be $\cU = \{0\}$ is of interest as a special case.
In this case $\fA$ collapses to $\fA = \fA_{11}$, $\fA_{22}$ is invertible vacuously, the Schur complement
$\fA_{11} - \fA_{12} \fA_{22}^{-1} \fA_{21}$ collapses to $\fA$ itself, and the linear-fractional map $\fT_\fA$
collapses to the multiplication operator $M_{\fA_{11}} = M_{\fA} \colon f(z) \mapsto \fA(z) f(z)$.  As an example let us consider
the case where $\fA(z) = \fA_{11}(z) = \begin{bmatrix} z_1 & \cdots & z_d \end{bmatrix}$ and
$\fT_\fA = M_\fA$ is the multiplication operator
\begin{equation}   \label{nc-row-ex}
M_\fA \colon \begin{bmatrix} g_1(z) \\ \vdots \\ g_d(z) \end{bmatrix} \mapsto z_1 g_1(z) + \cdots + z_d g_d(z).
\end{equation}
A distinctive feature of the noncommutative Fock space setting is that this $M_\fA = M_{Z(\cdot)}$ with
$Z(z) = \begin{bmatrix} z_1 & \cdots & z_d \end{bmatrix}$ is injective, i.e.,  given a formal power series of the form
$f(z) = z_1 g_1(z) +  \cdots + z_d g_d(z)$, one can uniquely backsolve for the power series $g_1(z), \dots, g_d(z)$
due to the fact that we have the orthogonal direct-sum decomposition
$$
  H^2(\free) = {\mathbb C} \oplus \bigoplus_{j=1}^d z_j H^2(\free).
$$
Note that the range of $M_{Z(\cdot)}$ is the proper subspace $\cM$ consisting of elements of $H^2(\free)$
with $\emptyset$-coefficient $f_\emptyset$ equal to zero, and hence  $M_{Z(\cdot)}$ is not surjective.
In fact $Z(\cdot)$ amounts to the Beurling-Lax representer in the sense of Popescu \cite{popescu89} for the (proper)
right-shift-invariant subspace $\cM$.  We shall discuss this example more generally in Theorem \ref{BLPop} below.
\end{ex}

\subsubsection{Construction of indefinite Schur-class multipliers}

We next present the analogue of \eqref{iden} in the setting of the present section.  Closely related results
appear in \cite{bbkats}, specifically Theorems 3.4 and 3.7 there.  Here we clarify the role of the second kernel
\eqref{jKS-dual} and use positivity of this kernel to avoid imposing as an assumption that the multiplier  $M_{\fA_{22}}$ is invertible.

\begin{thm}  \label{T:4.1}
Given output stable pairs $(E,{\bf T})$ and $(N,{\bf T})$ with $E\in\cL(\cX,\cY)$, $N\in\cL(\cX,\cU)$ and $\bT=(T_1,\ldots,T_d)\in\cL(\cX)^d$ and given an operator $P\succ 0$ that satisfies the Stein identity \eqref{2.8}, set
$C  := \sbm{ E \\ N } \colon \cX \to \sbm{ \cY \\ \cU }$ and define $T$ as in \eqref{1.6a}. Then we have:
\begin{enumerate}
\item
There exists a Hilbert space $\cF$ so that the $J$-Cholesky factorization problem
\begin{equation}\label{10a}
\begin{bmatrix} B \\ D \end{bmatrix}J_{_{\cF,\cU}}\begin{bmatrix}
B^{*} & D^{*}\end{bmatrix}=\begin{bmatrix} P^{-1}\otimes I_d & 0 \\0 &
J_{_{\cY,\cU}} \end{bmatrix}-\begin{bmatrix} T \\ C\end{bmatrix}P^{-1}\begin{bmatrix} T^{*} & C^*\end{bmatrix}
\end{equation}
has an injective solution $\sbm{ B \\ D } \colon \cF \oplus \cU \to \sbm{ \cX^d \\ \cY \oplus \cU}$.

\item
The operator $\sbm{ T \\ C }$ is a $\big(P, \sbm{ P \otimes I_d & 0 \\ 0 & J_{\cY, \cU}} \big)$-isometry, and  for any injective solution $\sbm{ B \\ D }$ to the $J$-Cholesky factorization problem \eqref{10a}, the operator
\begin{equation} \label{bU}
\bU = \begin{bmatrix} T & B \\ C & D \end{bmatrix} \colon \begin{bmatrix} \cX \\ \cF \oplus \cU \end{bmatrix} \to \begin{bmatrix} \cX^d \\ \cY \oplus \cU \end{bmatrix}
\end{equation}
is a $\big( \sbm{ P & 0 \\ 0 & J_{\cF, \cU} }, \sbm{ P \otimes I_d & 0 \\ 0 & J_{\cY, \cU}} \big)$-unitary completion $\sbm{ T \\ C }$,
and conversely, any such $\bU$ arises from an injective solution $\sbm{ B \\ D}$ of  \eqref{10a}.
Explicitly $\bU$ satisfies  the metric relations
\begin{align}
 & \bU^* \begin{bmatrix} P \otimes I_d & 0 \\ 0 & J_{\cY, \cU} \end{bmatrix} \bU  = \begin{bmatrix} P & 0 \\ 0 & J_{_{\cF, \cU}} \end{bmatrix},
 \label{bU=Jiso} \\
 & \bU \begin{bmatrix} P^{-1} & 0 \\ 0 & J_{\cF, \cU} \end{bmatrix}  \bU^* = \begin{bmatrix} P^{-1} \otimes I_d & 0 \\ 0 & J_{\cY, \cU}
\end{bmatrix}.
\label{bU=Jcoiso}
\end{align}

\item
For any injective solution $\sbm{ B \\ D }$ to the $J$-Cholesky factorization problem \eqref{10a}, if we set $\fA$ equal to the transfer function of the colligation matrix $\bU$ in \eqref{bU} having the form
 \begin{equation}
{\mathfrak A}(z)=D+C(I-Z(z)T)^{-1}Z(z)B,
        \label{2.29a}
\end{equation}
with $Z(z)$ as in \eqref{1.6a}, then the formal kernels $K^{J_{_{\cF,\cU}},J_{_{\cY,\cU}}}_{{\mathfrak A}}(z, \zeta)$ in \eqref{jKS} and
$\widetilde K^{J_{_{\cF,\cU}},J_{_{\cY,\cU}}}_{{\mathfrak A}}(z, \zeta)$ in \eqref{jKS-dual} associated
with $\fA$ have the explicit form
\begin{align}
    &  K^{J_{_{\cF,\cU}},J_{_{\cY,\cU}}}_{{\mathfrak A}}(z, \zeta)
 =\begin{bmatrix}E \\ N\end{bmatrix}(I - Z(z) T)^{-1} P^{-1}
          (I - T^{*} Z(\zeta)^{*})^{-1}\begin{bmatrix}E^* & N^*\end{bmatrix}, \label{2.30}  \\
&  \widetilde K^{J_{_{\cY,\cU}},J_{_{\cF,\cU}}}_{{\mathfrak A}}(z, \zeta)\label{2.30'}=\\
& \  = B^* (I - Z(\zeta)^* T^*)^{-1} \big( k_{\rm Sz}(z, \zeta) ( P \otimes I_d - Z(\zeta)^* P Z(z) ) \big)  (I - T Z(z))^{-1} B.\notag
  \end{align}
Both these formal kernels turn out to be positive kernels and hence $\fA$ is in the indefinite Schur class
$\cS_{{\rm nc},d}(J_{_{\cY,\cU}},J_{_{\cF,\cU}})$.
  \end{enumerate}
\end{thm}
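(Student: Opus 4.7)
The proof naturally splits into stages matching items (1)--(3), with (2) carried out in two steps.

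The first step of (2) is to rewrite the Stein identity \eqref{2.8} in block form via $C=\sbm{E\\N}$ and $T=\sbm{T_1\\\vdots\\T_d}$ as
\[
\begin{bmatrix}T^*&C^*\end{bmatrix}\begin{bmatrix}P\otimes I_d&0\\0&J_{_{\cY,\cU}}\end{bmatrix}\begin{bmatrix}T\\C\end{bmatrix}=P,
\]
which is exactly the asserted $(P,\sbm{P\otimes I_d&0\\0&J_{_{\cY,\cU}}})$-isometry property. For (1), the plan is to view the right-hand side of \eqref{10a} as the co-isometric defect of $\sbm{T\\C}$ inside the Kre\u{\i}n space $\bigl(\cX^d\oplus\cY\oplus\cU,\sbm{P\otimes I_d&0\\0&J_{_{\cY,\cU}}}\bigr)$, then invoke the Kre\u{\i}n space lemma from Appendix \ref{S:Krein} to produce a Hilbert space $\cF$ together with an injective $\sbm{B\\D}\colon\cF\oplus\cU\to\cX^d\oplus\cY\oplus\cU$ whose Gram factorization $\sbm{B\\D}J_{_{\cF,\cU}}\sbm{B^*&D^*}$ equals that defect and whose range is the Kre\u{\i}n-orthogonal complement of the uniformly positive, hence regular, subspace $\Ran\sbm{T\\C}$. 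Regularity then gives $\Ran\sbm{T\\C}\oplus\Ran\sbm{B\\D}=\cX^d\oplus\cY\oplus\cU$ and bijectivity of $\bU$.

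The second step of (2): equation \eqref{bU=Jcoiso} is merely the $2\times2$ block expansion of \eqref{10a} combined with the Stein identity, which I would verify directly. For \eqref{bU=Jiso}, set $G_1=\sbm{P&0\\0&J_{_{\cF,\cU}}}$ and $G_2=\sbm{P\otimes I_d&0\\0&J_{_{\cY,\cU}}}$; then \eqref{bU=Jcoiso} reads $\bU G_1^{-1}\bU^*=G_2^{-1}$, and combined with bijectivity of $\bU$ this rearranges to $\bU^{-1}=G_1^{-1}\bU^*G_2$, whence $\bU^*G_2\bU=G_1$, which is \eqref{bU=Jiso}. Conversely, any Kre\u{\i}n-unitary completion $\bU=\sbm{T&B\\C&D}$ has its $\sbm{B\\D}$-block solving \eqref{10a} by the block expansion of \eqref{bU=Jcoiso}, and this block is injective since $\bU$ is bijective.

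For (3) the plan is the familiar state-space kernel computation. I would substitute \eqref{2.29a} into the defining expressions \eqref{jKS} and \eqref{jKS-dual} for the two formal kernels and apply the three block identities read off from \eqref{bU=Jcoiso}, namely
\[
CP^{-1}C^*=J_{_{\cY,\cU}}-DJ_{_{\cF,\cU}}D^*,\quad BJ_{_{\cF,\cU}}D^*=-TP^{-1}C^*,\quad BJ_{_{\cF,\cU}}B^*=P^{-1}\otimes I_d-TP^{-1}T^*,
\]
together with the telescoping identity $(I-Z(z)T)^{-1}Z(z)T=(I-Z(z)T)^{-1}-I$ and the noncommutative Szeg\H{o} recursion $k_{\rm Sz}(z,\zeta)=1+Z(z)k_{\rm Sz}(z,\zeta)Z(\zeta)^*$. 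The algebra collapses $K^{J_{_{\cF,\cU}},J_{_{\cY,\cU}}}_\fA(z,\zeta)$ to \eqref{2.30}, with positivity immediate from its Kolmogorov form $H(z)P^{-1}H(\zeta)^*$ since $P^{-1}\succ 0$. An entirely parallel computation starting from \eqref{jKS-dual} and invoking the block identities from \eqref{bU=Jiso} delivers \eqref{2.30'}; its positivity follows by recasting the middle factor $k_{\rm Sz}(z,\zeta)(P\otimes I_d - Z(\zeta)^*PZ(z))$ via the same Szeg\H{o} recursion into Kolmogorov form. Once both kernels are positive, Proposition \ref{P:IdefSchurKernelChar} yields $\fA\in\cS_{{\rm nc},d}(J_{_{\cY,\cU}},J_{_{\cF,\cU}})$. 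The main obstacle will be the bookkeeping in the noncommutative state-space computation for \eqref{2.30} and especially \eqref{2.30'}: since the $z_j$'s do not commute among themselves, the Szeg\H{o} factor $k_{\rm Sz}(z,\zeta)$ must be threaded carefully between $\fA(z)$, $\fA(\zeta)^*$ and the row/column $Z(z)$, $Z(\zeta)^*$. A secondary subtlety is to ensure that the Kre\u{\i}n space lemma produces simultaneously an injective $\sbm{B\\D}$ and the bijectivity of $\bU$ needed to pass from \eqref{bU=Jcoiso} to \eqref{bU=Jiso}.
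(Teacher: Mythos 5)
Your overall roadmap matches the paper's: items (1) and (2) via Lemma \ref{L:findBD}, with the Stein identity \eqref{2.8} giving the isometry hypothesis, and item (3) via the state-space kernel computation based on \eqref{bU=Jcoiso} and \eqref{bU=Jiso}. Your derivation of \eqref{bU=Jiso} from \eqref{bU=Jcoiso} plus surjectivity of $\bU$ is a valid alternative to reading both identities off the lemma directly (it is essentially Remark \ref{R:Junitary}).

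There are, however, two genuine gaps, and you have flagged neither. First and most serious: you write that positivity of the second kernel \eqref{2.30'} ``follows by recasting the middle factor $k_{\rm Sz}(z,\zeta)(P\otimes I_d - Z(\zeta)^*PZ(z))$ via the same Szeg\H{o} recursion into Kolmogorov form.'' The Szeg\H{o} recursion $k_{\rm Sz}(z,\zeta) = 1 + \sum_j z_j k_{\rm Sz}(z,\zeta)\bzeta_j$ is a scalar identity; when $d=1$ it does reduce $\mathfrak K(z,\zeta) = k_{\rm Sz}(z,\zeta)I_d - Z(\zeta)^* k_{\rm Sz}(z,\zeta) Z(z)$ to the constant $1$, but for $d\geq 2$ the kernel $\mathfrak K$ is a genuine $d\times d$ matrix with off-diagonal entries $-\bzeta_i k_{\rm Sz}(z,\zeta) z_j$, and its positivity is not a formal consequence of the Szeg\H{o} recursion. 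The paper proves it by constructing an explicit Kolmogorov decomposition $\mathfrak K(z,\zeta) = I_d + \sum_{1\le i<j\le d}F_{ij}(z)F_{ij}(\zeta)^*$ with $F_{ij}(z) = {\bf e}_i(R_jH)(z)-{\bf e}_j(R_iH)(z)$, followed by a term-by-term entry check (or alternatively the inductive argument of Remark \ref{R:ind-proof}). This is the key non-routine ingredient of item (3), and your plan treats it as if it were automatic. Second: after invoking Lemma \ref{L:findBD}, what you get is \emph{some} Kre\u{\i}n space $(\cX_2',J_2')$ and injective $W$ factoring the defect; you still have to argue that $J_2'$ can be taken of the specific form $J_{\cF,\cU}$, i.e., with negative index exactly $\dim\cU$. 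The paper does this via the inertia comparison of Corollary \ref{C:J-UniCompl}(2), using the Kre\u{\i}n-space isomorphism supplied by the unitary completion. Your plan asserts the lemma ``produces a Hilbert space $\cF$'' without justifying this signature count.
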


\begin{proof}
Our proof relies on a general Kre\u{\i}n-space lemma which we prove in the appendix, namely Lemma \ref{L:findBD}.

The proof of points (1) and (2) is just an application of Lemma  \ref{L:findBD} with
\begin{align*}
& (\cX'_1, J'_1) := (\cX, P), \quad (\cX, J)  := \bigg( \begin{bmatrix} \cX^d \\ \cY \oplus \cU \end{bmatrix},
\begin{bmatrix} P \otimes I_d & 0 \\ 0 & J_{\cY, \cU} \end{bmatrix} \bigg),  \\
& V  := \mat{ T \\ C} \text{ with } C := \begin{bmatrix} E \\  N \end{bmatrix} \mbox{ and $T$ as in \eqref{1.6a}.}
\end{align*}
The requirement that $V$ be a $(J'_1,J)$-isometry then takes the form
$$
\sum_{j=1}^d T_j^* P T_j + E^* E - N^* N = P
$$
which is just a rewriting of the Stein equation \eqref{2.8}, which holds by assumption. Hence Lemma  \ref{L:findBD} indeed applies. The fact that the Kre\u{\i}n space $(\cX_2', J'_2)$ claimed to exist
in Lemma \ref{L:findBD} can be taken to be
$$
 (\cX'_2,  J'_2) = (\cF \oplus \cU, J_{\cF, \cU})
$$
follows by item (2) of Corollary \ref{C:J-UniCompl}, where we take $(\cX_1,J_1)=(\cX,P)$ and $(\cX_2,J_2)=\big(\sbm{\cX^{d-1}\\\cY\oplus\cU},\sbm{P\otimes I_{d-1}&0\\0&J_{\cY,\cU}}\big)$, noting that the $(J,J')$-unitary operator serves as a Kre\u{\i}n-space isomorphism.

Hence by Lemma \ref{L:findBD} the $( J_1', J )$-isometric operator $\sbm{ T \\ C }$ can be completed to a  $( J', J )$-unitary operator for $J'=\sbm{J_1'&0\\0&J_2'}$, which yields precisely an operator $\bU$ as in \eqref{bU} satisfying relations \eqref{bU=Jiso} and \eqref{bU=Jcoiso}. That this provides a solution to the $J$-Cholesky factorization problem \eqref{10a} as claimed in item (1) and the relation between solutions to the $J$-Cholesky factorization problem \eqref{10a} and $( J', J )$-unitary  completions of $\sbm{ T \\ C }$ now follows directly by the second claim of Lemma \ref{L:findBD}. So, we proved items (1) and (2) and it remains to prove item (3).

A straightforward computation based solely on the equality \eqref{bU=Jcoiso} shows
that the kernel $K^{J_{_{\cF,\cU}},J_{_{\cY,\cU}}}_{{\mathfrak A}}$ in \eqref{jKS} associated with the characteristic formal power
series of the colligation \eqref{bU} given by \eqref{2.29a}  can be factored as in \eqref{2.30}.

A similar computation based solely on the equality \eqref{bU=Jiso} shows that the kernel $\widetilde K^{J_{_{\cY,\cU}},J_{_{\cF,\cU}}}_{{\mathfrak A}}$ in \eqref{jKS'} associated with the same power series can be factored as in \eqref{2.30'} or equivalently, as
\begin{align*}
 \widetilde{K}^{J_{_{\cY,\cU}},J_{_{\cF,\cU}}}_{{\mathfrak A}}(z, \zeta)=&B^* (I - Z(\zeta)^* T^*)^{-1}(P^{\half}\otimes I_d)\\
&\times\big( k_{\rm Sz}(z, \zeta)I_{\cX^d}- Z(\zeta)^*(k_{\rm Sz}(z, \zeta)I_{\cX})Z(z) ) \big) \\
&\times(P^{\half}\otimes I_d)(I - T Z(z))^{-1} B.
\end{align*}
These computations are worked out in detail for the definite case (see Theorems 2.2 and 2.3 in \cite{BTV}).

From the factorization \eqref{2.30} for the first kernel, we see immediately that the first kernel
$K_\fA{^{J_{\cF, \cU}, J_{\cY, \cU}}}$ is a positive kernel.

To show that the second kernel \eqref{2.30'} is a positive kernel, it suffices to verify that the kernel
\begin{equation}
\mathfrak K(z,\zeta)=\left[\mathfrak K_{ij}(z,\zeta)\right]_{i,j=1}^d:= k_{\rm Sz}(z, \zeta)I_d-Z(\zeta)^*k_{\rm Sz}(z, \zeta)Z(z)
\label{easy2}
\end{equation}
is positive. Since
$$
k_{\rm Sz}(z, \zeta)-\sum_{j=1}^d \overline\zeta_j k_{\rm Sz}(z, \zeta)z_j=1,
$$
we have
\begin{equation}
\mathfrak K_{ij}(z,\zeta)=\begin{cases} 1+{\displaystyle\sum_{m\neq j} \overline\zeta_mk_{\rm Sz}(z, \zeta)z_m}, \; \; \mbox{for} \; \; i=j,\\
- \overline\zeta_i k_{\rm Sz}(z, \zeta)z_j, \; \; \mbox{for} \; \; i\neq j.\end{cases}
\label{easy}
\end{equation}
Let
$$
k_{\rm Sz}(z, \zeta)=H(z)H(\zeta)^*,\quad H(z)={\rm Row}_{\alpha\in\free}z^\alpha
$$
be the Kolmogorov decomposition of $k_{\rm Sz}$ and let $R_j:=R_{z_j}$ be the coordinate-variable multipliers from \eqref{intertw}.
Since
$$
(R_jH)(z)={\rm Row}_{\alpha\in\free}z^\alpha z_j\quad\mbox{for}\quad j=1,\ldots,d,
$$
we have
\begin{equation}
((R_jH)(z))(R_iH(\zeta))^*=\overline{\zeta_i}k_{\rm Sz}(z, \zeta)z_j \quad\mbox{for}\quad i,j=1,\ldots,d.
\label{easy1}
\end{equation}
Let ${\bf e}_j$ denote the $j$-th column of the identity matrix $I_d$, let
\begin{equation}
F_{ij}(z)={\bf e}_i(R_jH)(z)-{\bf e}_j(R_iH)(z)\quad\mbox{for}\quad 1\le i<j\le d.
\label{easy3}
\end{equation}
We claim that
\begin{equation}
\mathfrak K(z,\zeta)=I_d+\sum_{1\le i<j\le d}F_{ij}(z)F_{ij}(\zeta)^*, \label{easy4}
\end{equation}
which provides a Kolmogorov decomposition of $\mathfrak K(z,\zeta)$ and proves that $\mathfrak K(z,\zeta)$ is a positive kernel. Note first that, by \eqref{easy3} and \eqref{easy1}, we have
\begin{align}
F_{ij}(z)F_{ij}(\zeta)^*&=\big({\bf e}_i(R_jH)(z)-{\bf e}_j(R_iH)(z)\big((R_jH(\zeta))^*{\bf e}_i^*-(R_iH(\zeta))^*{\bf e}_j^*\big)\notag \\
&={\bf e}_i\overline{\zeta}_jk_{\rm Sz}(z, \zeta)z_j {\bf e}_i^*+{\bf e}_j\overline{\zeta}_ik_{\rm Sz}(z, \zeta)z_i{\bf e}_j^*\notag \\
&\quad -{\bf e}_i\overline{\zeta}_ik_{\rm Sz}(z, \zeta)z_j {\bf e}_j^*-{\bf e}_j\overline{\zeta}_jk_{\rm Sz}(z, \zeta)z_i {\bf e}_i^*.\label{easy5}
\end{align}

First we consider the diagonal entries of $\mathfrak K(z,\zeta)$. From the identity \eqref{easy5}, we  see that the $(\ell,\ell)$-entry in the matrix $F_{ij}(z)F_{ij}(\zeta)^*$ is non-zero only if either $i=\ell$ or $j=\ell$. Comparing the $\ell\ell$-entries in \eqref{easy4} gives
\begin{align}
1+\sum_{1\le i<j\le d}\left[F_{ij}(z)F_{ij}(\zeta)^*\right]_{\ell\ell}&
=1+\sum_{i<\ell}\left[F_{i\ell}(z)F_{i\ell}(\zeta)^*\right]_{\ell\ell}+\sum_{j>\ell}\left[F_{\ell j}(z)F_{\ell j}(\zeta)^*\right]_{\ell\ell}\notag \\
&=1+\sum_{j\neq\ell}\big[{\bf e}_j\overline{\zeta}_\ell k_{\rm Sz}(z, \zeta)z_\ell {\bf e}_j^*+
{\bf e}_\ell\overline{\zeta}_j k_{\rm Sz}(z, \zeta)z_j {\bf e}_\ell^*\notag \\
&\qquad\qquad-{\bf e}_j\overline{\zeta}_j k_{\rm Sz}(z, \zeta)z_\ell {\bf e}_\ell^*
-{\bf e}_\ell\overline{\zeta}_\ell k_{\rm Sz}(z, \zeta)z_j {\bf e}_j^*\big]_{\ell\ell}\notag \\
&=1+\sum_{j\neq \ell}\overline\zeta_jk_{\rm Sz}(z, \zeta)z_j.\notag
\end{align}
Hence, we see that \eqref{easy4} holds on the diagonal.

For $\ell\neq r$ the $(\ell,r)$-entry in the right hand side of \eqref{easy4} works out as
$$
\sum_{1\le i<j\le d}\left[F_{ij}(z)F_{ij}(\zeta)^*\right]_{\ell r}=
\begin{cases} \left[F_{\ell r}(z)F_{\ell r}(\zeta)^*\right]_{\ell r} \; \;  \mbox{if} \; \; \ell<r, \\
\left[F_{r\ell }(z)F_{r\ell}(\zeta)^*\right]_{\ell r} \; \;  \mbox{if} \; \; r<\ell.\end{cases}
$$
In either case, according to \eqref{easy5}, we end up with $-\overline\zeta_\ell k_{\rm Sz}(z, \zeta)z_r$, which corresponds to the $(\ell,r)$-entry of $\mathfrak K(z,\zeta)$ according to \eqref{easy}. Hence we have established \eqref{easy4} and obtained that $\mathfrak K(z,\zeta)$ is a positive kernel, and thus that $ \widetilde{K}^{J_{_{\cY,\cU}},J_{_{\cF,\cU}}}_{{\mathfrak A}}(z, \zeta)$ is a positive kernel.

Since we now see that both kernels \eqref{2.30} and \eqref{2.30'} are positive, we conclude by Proposition
\ref{P:IdefSchurKernelChar} that ${\mathfrak A}(z)$ belongs to the  indefinite Schur class
$\cS_{{\rm nc},d}(J_{_{\cY,\cU}},J_{_{\cF,\cU}})$, and the  proof of item (3) is complete.
\end{proof}

\begin{rem}  \label{R:ind-proof}
{\rm If we let $\mathfrak R_m(z,\zeta)$ denote the $m\times m$-matrix valued kernel of the form \eqref{easy2} but in variables $z_1,\ldots,z_m$, i.e.,
$$
\mathfrak R_m(z,\zeta)=k_{{\rm Sz},m}(z, \zeta)I_m-Z_m(\zeta)^*k_{{\rm Sz},m}(z, \zeta)Z_m(z),
$$
then it turns out that
\begin{equation}
\mathfrak R_m(z,\zeta)-\begin{bmatrix}\mathfrak R_{m-1}(z,\zeta) & 0 \\ 0 & 1\end{bmatrix}
=G(z)G(\zeta)^*
\label{easy9}
\end{equation}
where
$$
G(z)=\begin{bmatrix}(R_m H)(z) & 0 &\ldots&0 \\ 0 & (R_mH)(z) &\ddots & \vdots\\ \vdots & \ddots & \ddots &0\\ 0 &\ldots & 0 &(R_mH)(z)\\
-(R_1 H)(z) & -(R_2H)(z)&\ldots &-(R_{m-1}H)(z)\end{bmatrix}.
$$
Here, $H$ denotes the power series from the Kolmogorov decomposition
$$
k_{{\rm Sz},m}(z, \zeta)=H(z)H(\zeta)^*,\quad H(z)={\rm Row}_{\alpha\in\mathbb F_m^+}z^\alpha
$$
and $R_j:=R_{z_j}$ are the coordinate-variable multipliers from \eqref{intertw}. Upon making use of \eqref{easy1}, we can
write the kernel $G(z)G(\zeta)^*$ in more detail as
$$
G(z)G(\zeta)^*=\begin{bmatrix}\overline{\zeta}_m k_{{\rm Sz},m}(z, \zeta)z_m I_{m-1}&
\begin{bmatrix}\overline{\zeta}_1 \\ \overline{\zeta}_2 \\ \vdots \\ \overline{\zeta}_{m-1}\end{bmatrix}k_{{\rm Sz},m}(z, \zeta)z_m\\
\overline{\zeta}_m k_{{\rm Sz},m}(z, \zeta)\begin{bmatrix}z_1 & z_2 &\ldots&z_{m-1}\end{bmatrix}&
{\displaystyle\sum_{j=1}^{m-1} \overline\zeta_jk_{{\rm Sz},m}(z, \zeta)z_j}\end{bmatrix}
$$
which along with formulas \eqref{easy} implies \eqref{easy9}. Note that by \eqref{easy9},
$$
\mathfrak R_m(z,\zeta)\succeq \begin{bmatrix}\mathfrak R_{m-1}(z,\zeta) & 0 \\ 0 & 1\end{bmatrix} \; \; \mbox{for all} \; \; m\ge 2,
$$
and since $\mathfrak R_1(z,\zeta)\equiv 1\succeq 0$, the positivity of the kernel $\mathfrak R_m$ for all $m\ge 1$ alternatively
follows by an induction argument.
}\end{rem}

In \cite{bbkats}, the series \eqref{2.29a} and the associated linear fractional transformation \eqref{lfm}
with free parameter ${\mathcal E}\in\cS_{{\rm nc},d}(\cU, \, \cF)$ were constructed to describe all
$S\in\cL(\cU,\cY)\langle\langle z\rangle\rangle$ solving the ${\bf OAP}_{\cS_{{\rm nc},d}(\cU, \, \cY)}$ with
interpolation condition
$$
(E^{*} S)^{\wedge L}({\bf T}^{*})=\cO_{E,{\bf T}}^*M_S|_{\cU}=N^*
$$
in case $P=\cO_{E,{\bf T}}^*\cO_{E,{\bf T}}-\cO_{N,T}^*\cO_{N,{\bf T}}\succ 0$.
Although the formal power series ${\mathfrak A}$ is still crucial in context of the ${\bf OAP}_{\cH(S_0)}$, the latter
Schur-class interpolation theorem can be by-passed as we will see in the next subsection.

\subsection{${\bf OAP}_{\cH(S_0)}$: parametrization of the solution set}
\label{S:param-ncSchur}

Throughout this subsection we assume that the operator $P$ given in \eqref{1.8**} is strictly positive definite. With all the above preliminaries out of the way, we can now describe the solution set for the interpolation problem ${\bf OAP}_{\cH(S_0)}$. We start with a specific solution.

\begin{prop}\label{P:4.2}
The power series
\begin{equation}
f_0(z)=(E-S_0(z)N)(I_\cX-Z(z)T)^{-1}P^{-1}{\bf x}
\label{fonc}
\end{equation}
is a solution to the ${\bf OAP}_{\cH(S_0)}$.
Furthermore, we have $\|f_0\|_{\cH(S_0)}=\|P^{-\half} {\bf x}\|_{\cX}$.
\end{prop}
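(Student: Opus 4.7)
The proposal is to read $f_0$ as the image of $P^{-1}\bx$ under the multiplication operator $M_{F^{S_0}}$ and then invoke Lemmas \ref{L:3.1} and \ref{L:3.2} mechanically.

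First, write $f_0 = M_{F^{S_0}}(P^{-1}\bx)$, which makes sense since $P \succ 0$. Because Lemma \ref{L:3.1} guarantees that $M_{F^{S_0}}$ sends $\cX$ into $\cH(S_0)$, we get $f_0 \in \cH(S_0)$ for free, and moreover the norm identity \eqref{1.17*} applied with $x := P^{-1}\bx$ yields
\[
\|f_0\|_{\cH(S_0)} = \|P^{\half} P^{-1}\bx\|_\cX = \|P^{-\half}\bx\|_\cX,
\]
which is the second claim.

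Next, to verify the interpolation condition $(E^{*}f_0)^{\wedge L}(\bT^{*}) = \bx$, recall from \eqref{compute} and the discussion following it that on $H^2_\cY(\free)$ (and hence on $\cH(S_0)\subset H^2_\cY(\free)$ as a set) the left-tangential evaluation coincides with $\cO_{E,\bT}^*$. Applying the first identity in \eqref{1.10} we then have
\[
(E^{*}f_0)^{\wedge L}(\bT^{*}) = \cO_{E,\bT}^* f_0 = M_{F^{S_0}}^{[*]} f_0 = M_{F^{S_0}}^{[*]} M_{F^{S_0}}(P^{-1}\bx),
\]
and the second identity in \eqref{1.10} collapses the right-hand side to $P \cdot P^{-1}\bx = \bx$, as required.

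Both steps are essentially bookkeeping on top of Lemmas \ref{L:3.1} and \ref{L:3.2}; there is no genuine obstacle. The only point that requires a bit of care is the identification of $\cO_{E,\bT}^* f_0$ with $M_{F^{S_0}}^{[*]} f_0$, which switches from the $H^2_\cY(\free)$-adjoint viewpoint (used in the definition of the left-tangential evaluation in \eqref{2.1u}) to the $\cH(S_0)$-adjoint viewpoint (in which Lemma \ref{L:3.2} is phrased), but this is precisely the content of the first equality in \eqref{1.10}.
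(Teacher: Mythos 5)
Your proof is correct and follows essentially the same route as the paper's: write $f_0 = M_{F^{S_0}}P^{-1}\bx$, invoke the norm identity \eqref{1.17*} from Lemma \ref{L:3.1} for the second claim, and use both identities in \eqref{1.10} from Lemma \ref{L:3.2} to reduce the interpolation condition to $PP^{-1}\bx = \bx$. The only difference is that you spell out the identification $(E^{*}f_0)^{\wedge L}(\bT^{*}) = \cO_{E,\bT}^* f_0$ explicitly, which the paper takes as understood.
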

\begin{proof} Let $F^{S_0}$ be given by \eqref{1.17}. Since $f_0=M_{F^{S_0}}P^{-1}{\bf x}$,
the formula for $\|f_0\|_{\cH(S_0)}$ follows from \eqref{1.17*}. Furthermore, by \eqref{1.10}, we have
\begin{align*}
(E^{*} f_0)^{\wedge L}(T^{*})&=\cO_{E,T}^*M_{F^{S_0}}P^{-1}{\bf x}\\
&=M_{F^{S_0}}^{[*]}M_{F^{S_0}}P^{-1}{\bf x}
=PP^{-1}{\bf x}={\bf x},
\end{align*}
which completes the proof.
\end{proof}
We next observe from \eqref{1.17*} that the operator $M_{F^{S_0}}P^{-\frac{1}{2}}: \; \cX\to \cH(S_0)$ is an isometry and that the space
\begin{equation}
{\mathcal N}=\{F^{S_0}(z)x: \, x\in\cX\}
\quad\mbox{with norm}\quad \|F^{S_0}x\|_{\cN}=\|P^{\frac{1}{2}}x\|_{_\cX}
\label{2.11}
\end{equation}
is isometrically included in  $\cH(S_0)$. Furthermore, the orthogonal complement of ${\mathcal N}$
in $\cH(S_0)$ is the range of the operator $I-M_{F^{S_0}}P^{-1}M_{F^{S_0}}^*$ with lifted norm of $\cH(S_0)$ and hence
it is a NFRKHS with reproducing kernel
\begin{equation}
K_{{\mathcal N}^\perp}(z,\zeta)=
K_{S_0}(z,\zeta)-F^{S_0}(z)P^{-1}F^{S_0}(\zeta)^*.
\label{2.12}
\end{equation}
On the other hand, $\cN^\perp$ can be characterized as the solution set of the homogeneous ${\bf OAP}_{\cH(S_0)}$:
$$
\cN^\perp=\left\{h\in \cH(S_0): \; (E^{*} h)^{\wedge L}(T^{*})=0\right\}.
$$
Indeed, for each $h\in\cH(S_0)$ we have $(E^{*} h)^{\wedge L}(T^{*})=M_{F^{S_0}}^{[*]}h=0$ (see \eqref{newcond}) if and only if
$$
\langle h, \, M_{F^{S_0}}x\rangle_{\cH(S_0)}=0\quad\mbox{for all}\quad x\in\cN.
$$
By the latter characterization and by Proposition \ref{P:4.2},
we conclude that $f\in\cY\langle\langle z\rangle\rangle$ solves the problem \eqref{evaldbr} if and only
if it is of the form $f=f_0+h$ where $h$ is an element in $\cN^\perp=\cH(K_{\cN^\perp})$.

To get a more specific description for $\cN^\perp$ we take a closer look at the kernel \eqref{2.12}.
We first use the definitions \eqref{KS} and \eqref{1.17} to write
\begin{align*}
& K_{\cN^\perp}(z,\zeta)
=\begin{bmatrix}I & -S_0(z)\end{bmatrix} (k_{\rm Sz}(z,\zeta) J_{\cY,  \cU})\begin{bmatrix}I \\ -S_0(\zeta)^*\end{bmatrix}\\
&\quad-\begin{bmatrix}I & -S_0(z)\end{bmatrix}\begin{bmatrix}E \\ N\end{bmatrix}(I-Z(z)T)^{-1}P^{-1}
(I-T^*Z(\zeta)^*)^{-1}\begin{bmatrix}E^* & N^*\end{bmatrix}\begin{bmatrix}I \\ -S_0(\zeta)^*\end{bmatrix}.
\end{align*}
Let $\mathfrak{A}=\sbm{\mathfrak{A}_{11}& \mathfrak{A}_{12}\\ \mathfrak{A}_{21} & \mathfrak{A}_{22}}$ be the power series constructed in Theorem \ref{T:4.1}. Then by identity \eqref{2.30}
we can represent the kernel $K_{\cN^\perp}$ as
\begin{equation}
K_{\cN^\perp}(z,\zeta)=\begin{bmatrix}I & -S_0(z)\end{bmatrix}
{\mathfrak A}(z)( k_{\rm Sz}(z,\zeta) J_{_{\cF,\cU}}) {\mathfrak A}(\zeta)^{*}
\begin{bmatrix}I \\ -S_0(\zeta)^*\end{bmatrix}.
\label{jan3}
\end{equation}
Now set
\begin{equation}
u(z) = {\mathfrak A}_{11}(z) - S_0(z) {\mathfrak A}_{21}(z), \qquad
        v(z) = S_0(z) {\mathfrak A}_{22}(z)-{\mathfrak A}_{12}(z),
\label{jan4}
\end{equation}
i.e.,
$$
      \begin{bmatrix} u(z) & - v(z) \end{bmatrix} := \begin{bmatrix} I & -
           S_0(z) \end{bmatrix} {\mathfrak A}(z)\in\cL(\cF\oplus\cU,\cY)\langle\langle z\rangle\rangle.
$$
Then, on account of \eqref{JYU}, we can rewrite \eqref{jan3} as
        \begin{equation} \label{uvineq}
K_{\cN^\perp}(z,\zeta)= u(z) (k_{\rm Sz}(z,\zeta)I_{\cF}) u(\zeta)^{*} - v(z)(k_{\rm Sz}(z,\zeta)I_{\cU}) v(\zeta)^{*}.
        \end{equation}
Since the kernel $K_{\cN^\perp}$ is positive, it follows by the non-commutative Leech theorem (see \cite[Section 3.3]{bbbook}) that
there is a ${\mathcal E}_0 \in
        {\mathcal S}_{{\rm nc},d}(\cU, \cF)$ so that $v= u{\mathcal E}_0$, which being combined with \eqref{jan4} gives
$$
S_0{\mathfrak A}_{22}-{\mathfrak A}_{12}=\left({\mathfrak A}_{11}-S_0{\mathfrak A}_{21}\right){\mathcal E}_0.
$$
We thus recover $S_0$ as $S_0 = \fT_{{\mathfrak A}}[{\mathcal E}_0]$. On the other hand, plugging in $v= u{\mathcal E}_0$
into \eqref{uvineq} gives
\begin{align*}
K_{\cN^\perp}(z,\zeta)&= u(z)\left(k_{\rm Sz}(z,\zeta)I_{\cF}-{\mathcal E}_0(z)k_{\rm Sz}(z,\zeta)
{\mathcal E}_0(\zeta)^*\right)u(\zeta)^{*}
=u(z)K_{\mathcal E_0}(z,\zeta)u(\zeta)^{*}.
\end{align*}
Combining the latter equality with \eqref{2.12} gives
$$
K_{S_0}(z,\zeta)=F^{S_0}(z)P^{-1}F^{S_0}(\zeta)^{*}+u(z)K_{\mathcal E_0}(z,\zeta)u(\zeta)^{*}.
$$
The first and the second terms on the right are reproducing formal kernels of the subspace $\mathcal N$ in
\eqref{2.11} and its orthogonal complement ${\mathcal N}^\perp$ in $\mathcal H(S_0)$, respectively. By part (2) in Proposition
\ref{P:cmult}, $M_u: \; \cH({\mathcal E_0})\to {\mathcal N}^\perp$ is a coisometry and
hence $M_u: \; \cH(\cE_0)\to \mathcal H(S_0)$ is a partial isometry.

\smallskip

The space $\mathcal M:=\cH({\mathcal E_0})\ominus \Ker M_u$
is isometrically included in $\cH({\mathcal E_0})$
and the restriction of $M_u$ to this space is an isometry from $\mathcal M$ into $\mathcal H(K_{S_0})$.

\smallskip

Putting all these pieces together, we arrive at the following result.

\begin{thm}\label{T:AIPsol}
Let $(E,\bT)\in\cL(\cU,\cY)\times\cL(\cX)^d$ be an output stable pair, ${\bf x}$ a vector in $\cX$ and $S_0$ a Schur-class multiplier in $\in\cS_{{\rm nc},d}(\cU,\cY)$. Assume that $P$ defined as in \eqref{1.8**} is strictly positive definite. Let $\mathfrak A(z)$ be the formal power series constructed in Theorem \ref{T:4.1}. Then there exists a ${\mathcal E}_0 \in
        {\mathcal S}_{{\rm nc},d}(\cU, \cF)$ such that $S_0 = \fT_{{\mathfrak A}}[{\mathcal E_0}]$.
Furthermore, let
$$
u(z) = {\mathfrak A}_{11}(z) - S_0(z) {\mathfrak A}_{21}(z)\quad\mbox{and}\quad \mathcal M=\cH({\mathcal E_0})
\ominus \Ker M_u.
$$
Then $f\in\cY\langle\langle z\rangle\rangle$ is a solution to the ${\bf OAP}_{\cH(S_0)}$
if and only if $f$ is of the form
\begin{equation}
f(z)=f_0(z)+u(z)\sigma(z),\qquad \sigma\in\cM,
\label{2.14}
\end{equation}
where $f_0$ is defined in \eqref{fonc} and $\sigma$ is a free parameter from $\cM$.
Furthermore, for $f$ as in \eqref{2.14} we have
\begin{align*}
\|f\|^2_{\cH(S_0)}&=\|f_0\|_{ \mathcal H(S_0)}^2+\|u\sigma\|^2_{ \mathcal H(S_0)}
=\|P^{-\half}{\bf x}\|^2_{\cX}+\|\sigma\|^2_{\cH(\cE_0)}.
\end{align*}
\end{thm}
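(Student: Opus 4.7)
The plan is to organize into a clean proof the arguments already sketched in the paragraphs preceding the theorem statement. To begin, Proposition \ref{P:4.2} certifies that the power series $f_0$ given by \eqref{fonc} is a solution of ${\bf OAP}_{\cH(S_0)}$ with $\|f_0\|_{\cH(S_0)} = \|P^{-1/2}\bx\|_\cX$; in particular, by the reformulation \eqref{newcond} of the interpolation condition as $M_{F^{S_0}}^{[*]}f = \bx$, the full solution set is precisely the affine subset $f_0 + \cN^\perp$ of $\cH(S_0)$, where $\cN^\perp$ denotes the orthogonal complement of the subspace $\cN$ from \eqref{2.11}.

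The next step is to analyze $\cN^\perp$ as an NFRKHS. Since Lemma \ref{L:3.1} exhibits $M_{F^{S_0}} P^{-1/2}$ as an isometry onto $\cN$, the orthogonal projection of $\cH(S_0)$ onto $\cN$ equals $M_{F^{S_0}} P^{-1} M_{F^{S_0}}^{[*]}$, so $\cN^\perp$ is an NFRKHS with reproducing kernel $K_{\cN^\perp}$ as displayed in \eqref{2.12}. Applying the explicit kernel identity \eqref{2.30} from Theorem \ref{T:4.1} recasts $K_{\cN^\perp}$ into the $J$-factored form \eqref{jan3}, and introducing $u$ and $v$ as in \eqref{jan4} rewrites this as the ``difference of positive kernels'' identity \eqref{uvineq}.

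The pivotal step is an application of the noncommutative Leech theorem \cite[Section 3.3]{bbbook}: because $K_{\cN^\perp}$ is a positive formal kernel with the factorization \eqref{uvineq}, there exists $\cE_0\in\cS_{{\rm nc},d}(\cU,\cF)$ with $v = u\cE_0$. Unpacking the definitions in \eqref{jan4} turns $v=u\cE_0$ into $S_0\mathfrak A_{22}-\mathfrak A_{12} = (\mathfrak A_{11}-S_0\mathfrak A_{21})\cE_0$, which is exactly $S_0 = \fT_{\mathfrak A}[\cE_0]$. Substituting $v=u\cE_0$ back into \eqref{uvineq} yields
\begin{equation*}
K_{\cN^\perp}(z,\zeta) = u(z)\,K_{\cE_0}(z,\zeta)\,u(\zeta)^*,
\end{equation*}
and Proposition \ref{P:cmult}(2) then identifies the multiplication operator $M_u\colon \cH(\cE_0)\to\cN^\perp$ as a coisometry; composing with the isometric inclusion $\cN^\perp\hookrightarrow\cH(S_0)$ shows that $M_u\colon\cH(\cE_0)\to\cH(S_0)$ is a partial isometry with range $\cN^\perp$.

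It remains to parametrize the solution set and compute norms. Restricting $M_u$ to the subspace $\cM = \cH(\cE_0)\ominus\Ker M_u$ gives an isometric bijection onto $\cN^\perp$, so every solution $f$ decomposes uniquely as $f = f_0 + u\sigma$ with $\sigma\in\cM$; the orthogonality of $\cN$ and $\cN^\perp$ in $\cH(S_0)$ then delivers
\begin{equation*}
\|f\|^2_{\cH(S_0)} = \|f_0\|^2_{\cH(S_0)}+\|u\sigma\|^2_{\cH(S_0)} = \|P^{-1/2}\bx\|^2_\cX + \|\sigma\|^2_{\cH(\cE_0)}.
\end{equation*}
The main obstacle I anticipate is making precise the invocation of the noncommutative Leech theorem: one has to check that the factorization \eqref{uvineq} of the positive kernel $K_{\cN^\perp}$ as a difference of positive-kernel terms $u(k_{\rm Sz}I_\cF)u^*$ and $v(k_{\rm Sz}I_\cU)v^*$ meets the hypotheses of the version in \cite[Section 3.3]{bbbook}, after which the remainder of the argument is bookkeeping with the definitions of $u$, $v$ and the orthogonal decomposition $\cH(S_0)=\cN\oplus\cN^\perp$.
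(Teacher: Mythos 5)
Your proof is correct and follows essentially the same route as the paper: a particular solution via Proposition \ref{P:4.2}, orthogonal decomposition $\cH(S_0)=\cN\oplus\cN^\perp$ with kernel \eqref{2.12}, the kernel identity \eqref{2.30} to obtain \eqref{uvineq}, the noncommutative Leech theorem to produce $\cE_0$ with $v=u\cE_0$ (hence $S_0=\fT_\fA[\cE_0]$), Proposition \ref{P:cmult}(2) to make $M_u\colon\cH(\cE_0)\to\cN^\perp$ a coisometry, and restriction to $\cM=\cH(\cE_0)\ominus\Ker M_u$ to get the parametrization and norm identity. The concern you raise about verifying the Leech-theorem hypotheses is indeed the one nontrivial point, and the paper handles it the same way, by exhibiting \eqref{uvineq} as the positivity of $u(k_{\rm Sz}I_\cF)u^*-v(k_{\rm Sz}I_\cU)v^*$.
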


\subsection{Interpolation in $H^2_\cY(\free)$}
Following the single-variable prototype in Subsection \ref{S:BLthm}, we specify Theorem \ref{T:AIPsol} to the case where the coefficient
space $\cU$ is taken to be the zero space $\{0\}$ and hence the only member of the Schur class
${\mathcal S}_{{\rm nc},d}(\{0\}, \cF)$ is the zero power series $S_0=0: \; \{0\}\to\cY$. Then
$\cH(S_0)=H^2_{\cY}(\free)$, $N: \, \cX\to\{0\}$ (by \eqref{Ndef}), and
$F^{S_0}(z)=E(I-Z(z)T)^{-1}$ (see formula \eqref{1.17}), so that
$M_{F^{S_0}}=\cO_{E,{\bf T}}$. If we assume that the operator $P:=\cO_{E,{\bf T}}^*\cO_{E,{\bf T}}$
is strictly positive definite, i.e., the pair $(E,{\bf T})$ is {\em exactly observable},
we get the following result on $H^2_\cY(\free)$-interpolation as a
special case of Theorem \ref{T:AIPsol}. We recall that a power series $\Phi(z)\in {\mathcal S}_{{\rm nc},d}(\cF, \cY)$
is called {\em (strictly) inner} if the multiplication operator $M_\Phi$ is an isometry from $H^2_\cF(\free)$ onto
$H^2_\cY(\free)$.
\begin{thm}\label{T:H2AIPsol}
Given an output stable exactly observable pair $(E,{\bf T})\in\cL(\cX,\cY)\times \cL(\cX)^d$
(i.e., $P:=\cO_{E,{\bf T}}^*\cO_{E,{\bf T}}\succ 0$), there exists an auxiliary Hilbert space $\cF$
and a strictly inner $\Phi(z)\in {\mathcal S}_{{\rm nc},d}(\cF, \cY)$ such that
\begin{align}
K_{\Phi}(z, \zeta) &: = k_{\rm Sz}(z,\zeta)I_{\cY} - \Phi(z)( k_{\rm Sz}(z,\zeta)I_{\cF})\Phi(\zeta)^{*}\label{2.30nc}\\
&=E(I - Z(z) T)^{-1} P^{-1}(I - T^{*} Z(\zeta)^{*})^{-1}E^*.\notag
\end{align}
Furthermore, for a given vector ${\bf x}\in\cX$, a power series $f\in H^2_{\cY}(\free)$ satisfies the interpolation condition
$$
(E^*f)^{\wedge L}(\bT^{*}):= \cO_{E, \bT}^{*}f={\bf x}
$$
if and only if it is of the form
\begin{equation}
f(z)=E(I-Z(z)T)^{-1}P^{-1}{\bf x}+\Phi(z)\sigma(z),
\label{2.14nc}
\end{equation}
where $\sigma$ is a free parameter from $H^2_{\cF}(\free)$. Finally, for $f$ as in \eqref{2.14nc} we have
\begin{equation}\label{normid}
\|f\|^2_{H^2_{\cY}(\free)}=\|P^{-\half}{\bf x}\|_{\cX}^2+\|h\|^2_{H^2_{\cF}(\free)}.
\end{equation}
\end{thm}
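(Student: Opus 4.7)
My strategy is to deduce Theorem~\ref{T:H2AIPsol} as the specialization of Theorem~\ref{T:AIPsol} to the degenerate case $\cU=\{0\}$, $S_0=0$, analogously to how the Beurling--Lax theorem was derived in Subsection~\ref{S:BLthm} in the single-variable setting. First I would unpack the specialization. Under $\cU=\{0\}$, the Schur class $\cS_{{\rm nc},d}(\{0\},\cY)$ contains only the zero series, so $N=0$ by \eqref{Ndef}, $\cH(S_0)=H^2_\cY(\free)$ (as $M_{S_0}=0$), and $P$ from \eqref{1.8**} collapses to $\cO_{E,\bT}^*\cO_{E,\bT}$, which is strictly positive by the exact-observability assumption. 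Moreover, $F^{S_0}(z)=E(I-Z(z)T)^{-1}$ from \eqref{1.17}, and the signature operators $J_{\cY,\cU}$, $J_{\cF,\cU}$ both reduce to genuine identities, so the indefinite Schur classes coincide with the ordinary Schur classes.

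With this setup in place I would invoke Theorem~\ref{T:4.1}: the Stein identity \eqref{2.8} becomes $P-\sum_{j}T_j^*PT_j=E^*E$ and holds by construction, so Theorem~\ref{T:4.1} supplies a Hilbert space $\cF$, an injective solution $\sbm{B\\D}\colon\cF\to\sbm{\cX^d\\\cY}$ of the Cholesky factorization problem \eqref{10a}, and a $\big(\sbm{P&0\\0&I_\cF},\sbm{P\otimes I_d&0\\0&I_\cY}\big)$-unitary colligation $\bU=\sbm{T&B\\E&D}$. Defining $\Phi(z):=D+E(I-Z(z)T)^{-1}Z(z)B$ places $\Phi$ in $\cS_{{\rm nc},d}(\cF,\cY)$, and the kernel factorization \eqref{2.30} in Theorem~\ref{T:4.1}(3) specializes precisely to the asserted identity \eqref{2.30nc}.

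Next I would apply Theorem~\ref{T:AIPsol} to the specialized data. The only member of $\cS_{{\rm nc},d}(\{0\},\cF)$ is $\cE_0=0$, so $\cH(\cE_0)=\cH(k_{\rm Sz}I_\cF)=H^2_\cF(\free)$, and the minimal-norm solution from \eqref{fonc} specializes to $f_0(z)=E(I-Z(z)T)^{-1}P^{-1}\bx$. Theorem~\ref{T:AIPsol} then yields the parametrization $f=f_0+M_\Phi\sigma$ with $\sigma\in\cM=H^2_\cF(\free)\ominus\Ker M_\Phi$, together with the norm identity $\|f\|^2_{H^2_\cY(\free)}=\|P^{-\half}\bx\|_\cX^2+\|\sigma\|^2_{H^2_\cF(\free)}$. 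It remains to show that $\Phi$ is strictly inner, i.e.\ $\Ker M_\Phi=\{0\}$, so that $\cM=H^2_\cF(\free)$ and the parametrization matches \eqref{2.14nc} and \eqref{normid}. This is the main obstacle. My approach: the kernel identity \eqref{2.30nc} reads $K_\Phi=\cO_{E,\bT}P^{-1}\cO_{E,\bT}^*$, which combined with $\cO_{E,\bT}^*\cO_{E,\bT}=P$ makes $\cH(\Phi)$ coincide isometrically with $\Ran\cO_{E,\bT}\subset H^2_\cY(\free)$, forcing $I-M_\Phi M_\Phi^*$ to be the orthogonal projection onto $\Ran\cO_{E,\bT}$ and hence $M_\Phi$ to be a partial isometry with range $(\Ran\cO_{E,\bT})^\perp=\Ker\cO_{E,\bT}^*$. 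Upgrading partial isometry to full isometry then rests on the minimality built into the injectivity of $\sbm{B\\D}$ together with the genuine positive-definite unitarity of $\bU$ in this specialization, via a standard state-space energy-balance computation showing $\|\Phi\sigma\|_{H^2_\cY(\free)}=\|\sigma\|_{H^2_\cF(\free)}$ for all $\sigma\in H^2_\cF(\free)$.
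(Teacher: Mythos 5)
Your specialization to $\cU=\{0\}$, the collapse of $\fA$ to $\Phi=\fA_{11}$, the identification $\cH(\cE_0)=H^2_\cF(\free)$, and the reduction of \eqref{2.30} to \eqref{2.30nc} all match the paper's route exactly. Your direct kernel argument showing that $I-M_\Phi M_\Phi^*$ is the orthogonal projection onto $\Ran\cO_{E,\bT}$, so that $M_\Phi$ is a partial isometry onto $\Ker\cO_{E,\bT}^*$, is a clean way to see items (1) and (2), and is consistent with what the paper deduces from Theorem~\ref{T:AIPsol}.

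However, you correctly flag "upgrading partial isometry to full isometry" as the main obstacle, and then leave a genuine gap there. Attributing the isometry to "the minimality built into the injectivity of $\sbm{B\\D}$" points at the wrong ingredient: injectivity of $\sbm{B_1\\D_1}$ is necessary (a vector $f\in\Ker\sbm{B_1\\D_1}$ would give $\Phi f\equiv 0$), but it is nowhere near sufficient, and the "energy-balance computation" you invoke only closes if the state-trajectory term tends to zero. That term vanishes precisely when $\bT$ is \emph{strongly stable}, i.e.\ $\sum_{|\alpha|=n}\|\bT^\alpha x\|^2\to 0$ for all $x\in\cX$, and establishing this is the actual crux. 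The paper derives it from $P\succ 0$ by a telescoping argument with the Stein identity $P-\sum_j T_j^*PT_j = E^*E$ applied to the partial sums defining $P=\cO_{E,\bT}^*\cO_{E,\bT}$, which yields $\lim_n\sum_{|\alpha|=n}\|P^{1/2}\bT^\alpha x\|=0$ and hence strong stability since $P\succ 0$ is bounded below; it then invokes \cite[Theorem 3.2.11]{bbbook}, whose hypotheses are the weighted unitarity of $\bU$ \emph{together with} strong stability of $\bT$. To complete your argument you must supply this strong-stability step (or cite a result that packages it); without it the partial isometry cannot be promoted to an isometry, and $\cM=H^2_\cF(\free)\ominus\Ker M_\Phi$ is not yet known to equal $H^2_\cF(\free)$.
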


\begin{proof}
To specify Theorem \ref{T:AIPsol} to the current particular case, we first note
that for $\cU=\{0\}$, the power series $\mathfrak{A}$ in  \eqref{2.29a} collapses to
\begin{equation}
\Phi(z):=\mathfrak A_{11}(z)=D_1+E(I-Z(z)T)^{-1}Z(z)B_1,
\label{2.15nc}
\end{equation}
where $\sbm{B_1\\ D_1}:   \cF\to\sbm{\cX^d\\ \cY}$ is an injective solution to the Cholesky factorization problem \eqref{10a}, which in its turn, now amounts to
\begin{equation}\label{cholh2}
\begin{bmatrix} B_1 \\ D_1 \end{bmatrix}
\begin{bmatrix} B_1^{*} & D_1^{*}\end{bmatrix}=\begin{bmatrix} P^{-1}\otimes I_d & 0 \\0 &
I_{\cY} \end{bmatrix}-\begin{bmatrix} T \\ E\end{bmatrix}P^{-1}\begin{bmatrix} T^{*} & E^{*}\end{bmatrix}.
\end{equation}
Note that this can be rewritten as
\[
\begin{bmatrix} T& B_1 \\ E& D_1 \end{bmatrix}
\begin{bmatrix} P^{-1} & 0 \\0 &
I_{\cF} \end{bmatrix}
\begin{bmatrix} T^*&E^*\\ B_1^{*} & D_1^{*}\end{bmatrix}
=\begin{bmatrix} P^{-1}\otimes I_d & 0 \\0 &
I_{\cY} \end{bmatrix}.
\]

The ``whole" identity \eqref{2.30} now amounts to the equality of the $11$-blocks, that is to the identity \eqref{2.30nc}. We next observe from  \eqref{pr} and
the Stein identity \eqref{2.8}, which in the present setting reduces to
\begin{equation}
P - \sum_{j=1}^{d} T_{j}^{*} P T_{j} = E^*E,
\label{sth2}
\end{equation}
that in the weak operator topology we have
\begin{align*}
P={\mathcal O}_{E,\bT}^*{\mathcal O}_{E, \bT}&=\lim_{n\to\infty}
\sum_{\alpha\in\free: |\alpha|< n}\bT^{*\alpha^\top}E^{*} E\bT^\alpha \\
&=\lim_{n\to\infty}\sum_{\alpha\in\free: |\alpha|< n}\bT^{*\alpha^\top}
\big(P - \sum_{j=1}^{d} T_{j}^{*} P T_{j}\big)\bT^\alpha \\
&=\lim_{n\to\infty}\bigg(\sum_{\alpha\in\free: |\alpha|< n}\bT^{*\alpha^\top}
P\bT^\alpha -\sum_{\alpha\in\free: 0<|\alpha|\le n}\bT^{*\alpha^\top}
P\bT^\alpha\bigg)\\
&= P-\lim_{n\to\infty}\sum_{\alpha\in\free: |\alpha|= n}\bT^{*\alpha^\top}P\bT^\alpha.
\end{align*}
Hence, for every ${\bf x}\in\cX$,
$$
\lim_{n\to\infty}\sum_{\alpha\in\free: |\alpha|= n}\|P^{\frac{1}{2}}\bT^\alpha {\bf x}\|=0.
$$
Since $P\succ 0$, we also have
$$
\lim_{n\to\infty}\sum_{\alpha\in\free: |\alpha|= n}\|\bT^\alpha {\bf x}\|=0,
$$
meaning that the tuple ${\bf T}$ is strongly stable. Due to equalities \eqref{cholh2} and \eqref{sth2}
and since the tuple ${\bf T}$ is strongly stable, the power series $\Phi$ in \eqref{2.15nc} is strictly inner by \cite[Theorem 3.2.11]{bbbook}. Moreover, since $S_0=0$, we obtain that $u$ in Theorem \ref{T:AIPsol} is given by $u=\mathfrak A_{11}=\Phi$.

Another character in Theorem \ref{T:AIPsol} is the Schur-class multiplier ${\mathcal E}_0 \in
        {\mathcal S}_{{\rm nc},d}(\cU, \cF)$ such that $S_0 = \fT_{{\mathfrak A}}[{\mathcal E}_0]$.
In the present setting, we have $0=S_0=\Phi{\mathcal E}_0$ with $\Phi$ strictly inner, so that $\cE_0=0: \; \{0\}\to\cF$, and hence $\cH({\mathcal E}_0)=
H^2_\cF(\free)$. Since $\Phi$ is strictly inner, $\Ker M_\Phi=\{0\}$, and the parametrization formula  \eqref{2.14} takes the form \eqref{2.14nc}. Finally, the norm identity \eqref{normid} now follows immediately.
\end{proof}
As an example of an exactly observable pair, let us consider the pair $({\bf ev}_\emptyset, {\bf R}_z^*)$, where
${\bf R}_z^*=(R_{z_1}^*,\ldots,R_{z_d}^*)$ is the backward-shift tuple on $H^2_{\cY}(\free)$ consisting of the
adjoint operators of right coordinate multipliers \eqref{intertw}:
$$
R^*_{z_j}: \; \sum_{\alpha\in\free} f_\alpha z^\alpha\mapsto \sum_{\alpha\in\free} f_{\alpha j} z^\alpha
\quad\mbox{for}\quad j=1,\ldots,d
$$
and ${\bf ev}_\emptyset$ is the free-coefficient evaluation operator
$$
{\bf ev}_\emptyset: \; \sum_{\alpha\in\free} f_\alpha z^\alpha\mapsto f_\emptyset.
$$
The computation
$$
\cO_{{\bf ev}_\emptyset,{\bf R}_z^*}f=\sum_{\alpha\in\free} ({\bf ev}_\emptyset {\bf R}_z^{*\alpha}f)z^\alpha=
\sum_{\alpha\in\free} f_\alpha z^\alpha=f
$$
shows that $\cO_{{\bf ev}_\emptyset,{\bf R}_z^*}$ is the identity operator on $H^2_{\cY}(\free)$.
If $\cM$ is a closed subspace of $H^2_{\cY}(\free)$ which is ${\bf R}_z$-invariant, i.e.,
$$
R_{z_j}\cM\subset \cM\quad\mbox{for}\quad j=1,\ldots,d,
$$
then the restricted output pair $(E, {\bf T}) \in \cL(\cX, \cY) \times \cL(\cX)^d$ given by
$$
 \cX = \cM^\perp, \quad E = {\bf ev}_\emptyset|_\cX, \quad {\bf T} = {\bf R}_z^*|_\cX
$$
is exactly observable and $\cM^\perp  = \Ran \cO_{E,{\bf T}}$. Therefore,
$\cM = \Ker \cO_{E,{\bf T}}^*$ coincides with the solution set of the
homogeneous Operator Argument interpolation Problem ${\bf OAP}_{H^2_{\cY}(\free)}$ with the interpolation condition
$$
(E^*f)^{\wedge L}(\bT^{*})= \cO_{E, \bT}^{*}f=0.
$$
Then part (2) in Theorem \ref{T:H2AIPsol} implies the non-trivial ``only if" part in
the following Beurling-Lax type theorem; see \cite{popescu89}.
\begin{thm}
\label{BLPop}
A closed subspace $\cM$ of $H^2_{\cY}(\free)$ is ${\bf R}_z$-invariant
if and only if there exist a Hilbert space $\cF$ and a strictly inner multiplier $\Phi\in {\mathcal S}_{{\rm nc},d}(\cF, \cY)$
such that  $\cM=\Phi\cdot H^2_{\cF}(\free)$.
\end{thm}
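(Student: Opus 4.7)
The plan is to prove the two implications separately; the ``if'' direction is immediate from the multiplier intertwining \eqref{CharMult}, while the ``only if'' direction will be a direct application of Theorem \ref{T:H2AIPsol} to a canonically chosen output pair attached to $\cM$.

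\emph{The ``if'' direction.} Suppose $\cM = \Phi \cdot H^2_\cF(\free)$ with $\Phi \in \cS_{{\rm nc},d}(\cF, \cY)$ strictly inner. Since $M_\Phi$ is an isometry, its range is closed, so $\cM$ is closed. By \eqref{CharMult} one has $R_{z_j} M_\Phi = M_\Phi R_{z_j}$ for $j = 1, \dots, d$, hence
\[
R_{z_j} \cM = R_{z_j} \bigl( M_\Phi H^2_\cF(\free) \bigr) = M_\Phi \bigl( R_{z_j} H^2_\cF(\free) \bigr) \subset M_\Phi H^2_\cF(\free) = \cM,
\]
establishing $\mathbf{R}_z$-invariance.

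\emph{The ``only if'' direction.} Let $\cM \subset H^2_\cY(\free)$ be closed and $\mathbf{R}_z$-invariant, and set $\cX := \cM^\perp$. Since each $R_{z_j}$ leaves $\cM$ invariant, each $R_{z_j}^*$ leaves $\cX$ invariant, so we may form the restricted output pair
\[
E := \bev_\emptyset|_\cX \in \cL(\cX, \cY), \qquad \mathbf{T} := \mathbf{R}_z^*|_\cX \in \cL(\cX)^d.
\]
The computation indicated in the paragraph preceding the theorem,
\[
\cO_{E,\mathbf{T}} x = \sum_{\alpha \in \free} \bigl( \bev_\emptyset \, \mathbf{R}_z^{*\alpha} x \bigr) z^\alpha = \sum_{\alpha \in \free} x_\alpha z^\alpha = x,
\]
identifies $\cO_{E, \mathbf{T}}$ with the isometric inclusion $\cX \hookrightarrow H^2_\cY(\free)$. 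Consequently $P := \cO_{E,\mathbf{T}}^* \cO_{E,\mathbf{T}} = I_\cX \succ 0$, so the pair is output stable and exactly observable, and $\Ran \cO_{E,\mathbf{T}} = \cX = \cM^\perp$. Taking orthogonal complements,
\[
\cM = (\Ran \cO_{E,\mathbf{T}})^\perp = \Ker \cO_{E,\mathbf{T}}^*,
\]
which identifies $\cM$ as the solution set of the homogeneous problem ${\bf OAP}_{H^2_\cY(\free)}$ with $\bx = 0$.

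Applying Theorem \ref{T:H2AIPsol} to this exactly observable pair yields a Hilbert space $\cF$ and a strictly inner multiplier $\Phi \in \cS_{{\rm nc},d}(\cF, \cY)$ such that the parametrization \eqref{2.14nc} specialized to $\bx = 0$ reads $f(z) = \Phi(z) \sigma(z)$ with $\sigma \in H^2_\cF(\free)$. Hence $\cM = \Phi \cdot H^2_\cF(\free)$, as required. There is no serious obstacle: the only point to verify is output-stability and exact observability of $(E, \mathbf{T})$, and these are immediate from the fact that $\cO_{\bev_\emptyset, \mathbf{R}_z^*}$ equals the identity operator on $H^2_\cY(\free)$, so that no new estimates are needed beyond what Theorem \ref{T:H2AIPsol} already supplies.
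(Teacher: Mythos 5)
Your proof is correct and follows the paper's approach essentially verbatim: the "only if" direction is exactly the paper's argument (restrict $(\bev_\emptyset, \mathbf{R}_z^*)$ to $\cX=\cM^\perp$, note $\cO_{E,\bT}$ is the inclusion so $P=I_\cX\succ 0$, identify $\cM=\Ker\cO_{E,\bT}^*$ with the homogeneous ${\bf OAP}_{H^2_\cY(\free)}$, and invoke the parametrization from Theorem \ref{T:H2AIPsol}). The "if" direction, which the paper leaves unstated as trivial, you fill in correctly via the intertwining relation \eqref{CharMult} and closedness of the range of an isometry.
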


\section{The Drury-Arveson space setting}\label{S:DruryArveson}

If we replace the tuple of freely noncommutative indeterminates $z=(z_1,\ldots,z_d)$ by a  commutative $d$-tuple of complex numbers $\lam= (\lam_1, \dots, \lam_d)$,  i.e., so that $\lam_i \lam_j = \lam_j \lam_i$ for all $i,j=1,\ldots,d$, then the Fock space \eqref{Fock} becomes the {\em Drury-Arveson space}. To be more precise, let us recall the following (commutative) multivariable notation: for multi-integers $\bn =(n_{1},\ldots,n_{d})\in{\mathbb Z}_+^d$ and points $z=(z_1,\ldots,z_d)\in\C^d$ we set
$$
|\bn| = n_{1}+n_{2}+\cdots +n_{d},\quad \bn!  = n_{1}!n_{2}!\ldots n_{d}!, \quad
z^\bn = z_{1}^{n_{1}}z_{2}^{n_{2}}\ldots z_{d}^{n_{d}}.
$$
Given a word $\alpha= i_{N}i_{N-1} \cdots i_{1}\in\free$  we let ${\mathbf a}(\alpha) \in {\mathbb Z}^{d}_{+}$ be the {\em abelianization} of $\alpha$, given by
$$
     {\mathbf a}(\alpha) = (n_{1}, \dots, n_{d})\in\Z_+^d\quad \text{where}\quad
     n_{j} = \#\{ k \colon i_{k} = j \} \quad\text{for}\quad j = 1, \dots, d
$$
(where in general $\# \Xi$ denotes the cardinality of the set $\Xi$).
Then the noncommutative and commutative multivariable functional calculus are related by
$z^\alpha \to \lam^{{\mathbf a}(\alpha)}$.

If ${\bf T}=(T_1,\ldots,T_d)\in\cL(\cX)^d$ is a tuple of {\em commuting}
Hilbert-space operators, we have
$$
{\bf T}^{\alpha}:=T_{i_{N}} \cdots T_{i_{1}}={\bf T}^{\ba(\alpha)}.
$$
Observe that for a given ${\mathbf n} \in {\mathbb Z}^{d}_{+}$,
$$
     \# \{ \alpha \in \free \colon {\mathbf a}(\alpha) = {\mathbf n}\} =
     \frac{|{\mathbf n}|!}{{\mathbf n}!}.
$$
Using the latter combinatorial fact, we may compute the commutative version $k_d$ of the
noncommutative Szeg\H{o} kernel \eqref{sz2}:
\begin{equation}
k_d(z,\zeta) \mapsto k_d(\lam, \eta) =  \sum_{\alpha \in \free} \lam^{\alpha}\overline{\eta}^{\alpha^{\top}}=
\sum_{{\bf n}\in{\mathbb Z}^d_+} \frac{|{\mathbf n}|!}{{\mathbf n}!}\lam^{\bf n}  \overline{\eta}^{\bf n}.
\label{Sz3}
\end{equation}
We will write $\langle \lam,  \eta \rangle=\sum_{j=1}^d \lam_j \overline{\eta}_j$
for the standard inner product in $\C^d$ and we will denote by
$\B^d=\left\{\lam=(\lam_1,\dots,\lam_d)\in\C^d \colon  \langle \lam, \lam\rangle<1\right\}$ the unit ball of the Euclidean
space $\C^d$. The power series in \eqref{Sz3} converges on ${\mathbb B}^{d} \times {\mathbb B}^{d}$ and
can be written in the closed form as
$$
k_d(\lam,\eta)=\frac{1}{1-\langle \lam, \, \eta\rangle},\quad \lam,\eta\in\mathbb B^d.
$$
The reproducing kernel Hilbert space (RKHS) $\cH_\cY(k_d)$ associated with the positive kernel $k_d(z,\zeta)I_{\cY}$, called the {\em Drury-Arveson space}, can be explicitly characterized as
$$
\cH_{\cY}(k_d)=\bigg\{f(\lam)=\sum_{\bn \in{\mathbb Z}^d_{+}}f_{\bn}
\lam^\bn:\|f\|^{2}=\sum_{\bn \in {\mathbb Z}^d_{+}}
\frac{\bn!}{|\bn|!}\cdot \|f_{\bn}\|_{\cY}^2<\infty\bigg\}.
$$
Note that if  we let the variables commute for an element $f$ of the Fock space $H^2_\cY(\free)$, we get a map
\begin{equation}   \label{Fock-to-DA}
f(z) = \sum_{\alpha \in \free} f_\alpha z^\alpha  \in H^2_\cY(\free) \mapsto \sum_{\bn \in {\mathbb Z}^+_d}
\bigg( \sum_{\alpha \colon \ba(\alpha) = \bn} f_\alpha \bigg) \lam^\bn.
\end{equation}
This map is not injective. However to make a canonical choice of $f\in H^2_\cY(\free)$ we can consider the following Calculus of Variations problem:\  {\em for a fixed index $\bn \in {\mathbb Z}^d_+$,  given a coefficient $f_\bn\in\cY$, among all collections of coefficient vectors $\{f_\alpha\colon \alpha \in \ba^{-1}(\bn)\}$ such that
\begin{equation}  \label{constraint}
\sum_{\alpha \in \ba^{-1}(\bn)} f_\alpha = f_\bn,
\end{equation} find the choice of the collection which minimizes $\sum_{\alpha \in \ba^{-1}(\bn)} \| f_\alpha \|^2$.} The solution is to choose $f_\alpha$ to be independent of $\alpha$ subject to the constraint \eqref{constraint}, i.e.,
$$
f_\alpha = \frac{{\mathbf n}!}{|{\mathbf n}|!}f_\bn\quad\mbox{for all}\quad \alpha\in\ba^{-1}(\bn).
$$
If $\cY$ is separable, as we assume throughout the paper, the problem reduces to the scalar-valued case, and in that case the solution follows by the classical arithmetic-quadratic mean inequality
\[
\left| \frac{a_1+\cdots+a_n}{n}\right|\leq \sqrt{\frac{|a_1|^2+\cdots+|a_n|^2}{n}}.
\]

\smallskip

If we restrict to the {\em permutation-invariant Fock space}, i.e., the subspace $H^2_\cY(\Pi \free)$
of $H^2_\cY(\free)$ consisting of power series $f = \sum_{\alpha \in \free} f_\alpha z^\alpha$ so that
$$
\ba(\alpha) = \ba(\alpha')  \Rightarrow f_\alpha = f_{\alpha'},
$$
it follows that the map \eqref{Fock-to-DA} is an isometric-isomorphism of $H^2_\cY(\Pi \free)$ onto $H^2_\cY(k_d)$
with inverse given by
$$
\sum_{\bn \in {\mathbb Z}^d_+} f_\bn \lam^\bn \mapsto \sum_{\bn \in {\mathbb Z}^d_+}
\sum_{\alpha \colon \ba(\alpha) = \bn} \frac{{\mathbf n}!}{|{\mathbf n}|!} f_\bn z^\alpha .
$$

In short we see that there are two distinct approaches to study the Drury-Arveson space:\ directly, or via making use of known structure for the Fock space to project down via the abelianization map \eqref{Fock-to-DA} and arrive at results for the Drury-Arveson space. For example the papers of Arias-Popescu \cite{arpop} and Davidson-Pitts \cite{davpitts} arrive at the solution of the Nevanlinna-Pick interpolation problem for Drury-Arveson space multipliers in papers mostly concerned with the Fock space setting, whereas the papers of Ball-Trent-Vinnikov \cite{BTV} and  Agler-McCarthy \cite{AgMcC} proceed directly.   There are more recent examples of both
approaches which are too numerous to mention here in detail.
Here we shall work directly with the Drury-Arveson space while using the Fock-space results as a guide for the proofs. Since many of the formulas and results from Section \ref{S:FockSpace} carry over directly to the commutative setting of the current section, we will often refer to these formulas and results and just give sketches of the proofs rather than writing out formulas and providing full details here again.

\begin{rem} \label{cO/cOa}  {\em Given an output pair $(E, \bT)\in\cL(\cX,\cY)\times \cL(\cX)^d$ as in Section \ref{S:FockSpace}, so with the entries $T_j$ of $\bT$ not necessarily commutating, one can associate two different observability operators, leading to two notions of output stability:\ We can define $\cO_{E,\bT}$ as in Section \ref{S:FockSpace} via \eqref{obsnc} and again call $(E,\bT)$ output stable if $\cO_{E,\bT}$ maps $\cX$ into $H^2_\cY(\free)$, or work with the commutative observability operator
$$
\cO^{\bf a}_{E, {\mathbf T}} \colon x \mapsto E (I_{\cX}- Z(\lam) T)^{-1}x = \sum_{\bn \in {\mathbb Z}^d_+}
 E  \,\big(\sum_{\alpha \colon \ba(\alpha)  = \bn} T^\alpha \big)\,  \lam^\bn,
$$
with $T$ and $Z(\lam)$ as in \eqref{1.6a}, and say that $\cO_{E,\bT}$ is ${\bf a}$-output stability when $\cO^{\bf a}_{E, {\mathbf T}}$ determines a bounded map from $\cX$ into $\cH_\cY(k_d)$. In case the tuple ${\bf T}$ is commutative (or at least when $E{\bf T}^\alpha=E{\bf T}^\beta$ for all $\alpha,\beta\in\free$ such that ${\bf a}(\alpha)={\bf a}(\beta)$) the two notions of output stability coincide, but in general, ${\bf a}$-output stability is a stronger notion than just output stability. Indeed, as was shown in \cite[Proposition 3.8]{bbf1}, if a pair $(E,{\bf T})$ is output stable, then
$$
\cO^{\bf a *}_{E, {\mathbf T}} \cO^{\bf a}_{E, {\mathbf T}}\preceq \cO_{E, {\mathbf T}}^*\cO_{E, {\mathbf T}}
$$
and hence, $(E,{\bf T})$ is also ${\bf a}$-output stable. Throughout this section, however, the tuple ${\bf T}$ is assumed to be commutative and we will drop the superscript ${\bf a}$ and simply write output stable instead of ${\bf a}$-output stable and $\cO_{E, {\mathbf T}}$ instead of $\cO^{\bf a}_{E, {\mathbf T}}$.}
\end{rem}

The $d$-variable Schur class $\mathcal S_d(\cU,\cY)$ is defined as the set of all contractive multipliers from $\cH_{\cU}(k_d)$ to $\cH_{\cY}(k_d)$, i.e., the set of all $\cL(\cU,\cY)$-valued analytic functions  $\lam \mapsto S(\lam)$ on $\B^d$ such that the multiplication operator $M_S \colon f(\lam) \mapsto F(\lam)f(\lam)$ is a contraction from $\cH_\cU(k_d)$ into $\cH_\cY(k_d)$. The latter is equivalent to the kernel
$$
K_S(\lam,\eta)=\frac{I-S(\lam)S(\eta)^*}{1-\langle \lam, \, \eta\rangle}
$$
being positive on ${\mathbb B}^{d} \times {\mathbb B}^{d}$. The associated reproducing kernel Hilbert space $\cH(S):=\mathcal H(K_S)$ is the $d$-variable de Branges-Rovnyak space associated with $S\in \mathcal S_d(\cU,\cY)$.

\smallskip
To formulate the Operator Argument interpolation Problem \textbf{OAP}$_{\cH(S_0)}$ in $\mathcal H(S_0)$, we need a left-tangential calculus, which can be obtained via abelianization of \eqref{2.1u}. Given an output stable pair $(E, \bT)\in\cL(\cX,\cY)\times \cL(\cX)^d$, see Remark \ref{cO/cOa}, since for $x\in \cX$, the vector
$\cO_{E, {\mathbf T}}x$ is now a vector-valued function analytic around the origin, we can
compute its Taylor expansion
\begin{align}
        (\cO_{E, {\mathbf T}} x)(\lam) & = E (I_{\cX} - Z(\lam) T)^{-1} x
         = E \sum_{\alpha \in \free} {\mathbf T}^{\alpha}
        \lam^{\alpha}x \notag\\
& = E\sum_{{\mathbf n} \in {\mathbb Z}^{d}_{+}} \bigg(
       \sum_{\alpha \in \free \colon {\mathbf a}(\alpha) = \bn} {\mathbf T}^{\alpha}
        \lam^{\alpha}   \bigg)  x
        = \sum_{{\mathbf n} \in {\mathbb Z}^{d}_{+}} \frac{ |{\mathbf
        n}|!}{{\mathbf n}!} E {\mathbf T}^{{\mathbf n}} \lam^{{\mathbf n}}x.\notag
\end{align}
We then define a {\em left-tangential functional calculus}
$f \to (E^{*}f)^{\wedge  L}(\bT^{*})$ on $\cH_{\cY}(k_{d})$ for the output stable pair $(E, \bT)$  by
\begin{equation}\label{2.1aip}
(E^{*} f)^{\wedge L}(\bT^{*}) := \sum_{ \bn \in {\mathbb Z}^{d}_{+}}
             \bT^{* \bn} E^{*} f_{\bn}\quad \text{if} \quad f(\lam) =
\sum_{\bn \in
{\mathbb Z}^{d}_{+}} f_{\bn} \lam^{\bn} \in \cH_{\cY}(k_{d}).
        \end{equation}
The computation similar to that in \eqref{compute}:
       \begin{align*}
            \left\langle \sum_{\bn \in {\mathbb Z}^{d}_{+}} \bT^{* \bn}E^{*}
        f_{\bn}, \; x \right \rangle_{\cX} & =
        \sum_{\bn \in {\mathbb Z}^{d}_{+}} \left\langle f_{\bn}, \; E \bT^{\bn}
        x \right \rangle_{\cY} \\
        & = \sum_{\bn \in {\mathbb Z}^{d}_{+}} \frac{\bn !}{|\bn|!}
\left\langle
         f_{\bn}, \frac{|\bn|!}{\bn !} E \bT^{\bn} x \right \rangle_{\cY}
         = \langle f, \; \mathcal {O}_{E, \bT} x\rangle_{\cH_{\cY}(k_{d})},
        \end{align*}
shows that the left-tangential evaluation \eqref{2.1aip}
        amounts to the adjoint of the observability operator:
$$
            (E^{*} f)^{\wedge L}(\bT^{*}) = \cO_{E, \bT}^{*}
            f \quad\text{for}\quad f \in \cH_{\cY}(k_{d})
$$
and applies to Schur-class multipliers as well as to functions from a given de Branges-Rovnyak space $\mathcal H(S_0)$. The Operator Argument interpolation Problem in the de Branges-Rovnyak space $\cH(S_0)$ (\textbf{OAP}$_{\cH(S_0)}$) is now formulated as follows.

\medskip
\noindent
${\bf OAP}_{\cH(S_0)}$: {\em Given an output stable pair $(E,\bT)$ with $E\in\cL(\cX,\cY)$ and a commutative $d$-tuple ${\bf T}=(T_1,\ldots,T_d)\in\cL(\cX)^d$,
a vector ${\bf x}\in\cX$ and a Schur-class multiplier $S_0 \in\cS_{d}(\cU,\cY)$, find all $f\in\cH(S_0)$ such that
    \begin{equation}\label{2.5aip}
    (E^{*} f)^{\wedge L}(\bT^{*})= \cO_{E, \bT}^{*}f={\bf x}.
    \end{equation}}

Following the lines in Section 3.1 we define the operator $N\in\cL(\cX,\cU)$ by the formula
\begin{equation}\label{Ndefaip}
N:=\sum_{{\bf n}\in\mathbb Z_+^d} S_{0,\bf n}^*E{\bf T}^{\bf n},\quad\mbox{where}\quad
    S_0(\lam)=\sum_{{\bf n}\in\mathbb Z_+^d} S_{0,\bf n} \lam^{\bf n},
\end{equation}
or equivalently, via its adjoint, by formula \eqref{1.8}. By \cite[Proposition 3.1]{bbieot},
the pair $(N,{\bf T})$ is output stable and equality
$$
\cO_{E,{\bf T}}^*M_{S_0}=\cO_{N,{\bf T}}^*: \; \cH_{\cY}(k_d)\to\cX
$$
holds. Then the operator $P$ defined as in \eqref{1.8**} is positive semidefinite. It is now defined
via abelianized infinite series
\begin{equation}
P:=\cO_{E,{\bf T}}^*\cO_{E,{\bf T}}-\cO_{N,{\bf T}}^*\cO_{N,{\bf T}}
=\sum_{\bn\in\mathbb Z_+^d}\frac{|\bn|!}{\bn !}\bT^{*{\bn}}(E^{*} E-N^*N)\bT^{\bn}
\label{1.8**com}
\end{equation}
and still satisfies the Stein identity \eqref{2.8}. We next use the formula \eqref{1.17} to define
the holomorphic $\cL(\cX,\cY)$-valued function $F^{S_0}$ such that for every $x\in\cX$, the $\cY$-valued function
$F^{S_0}x = F^{S_0}(\cdot) x $ belongs to the de commutative Branges-Rovnyak space $\mathcal H(S_0)$ and satisfies equality \eqref{1.17*}.
The abelianized version of Lemma \ref{L:3.2} establishes equalities \eqref{1.10} where now $M_{F^{S}_0}^{[*]}$ denotes the
adjoint of the operator $M_{F^S}: \, \cX\to\cH_{\cY}(k_d)$ in the metric of $\cH(S_0)$.
Hence, as in the non-commutative case, the interpolation condition \eqref{2.5aip} can be written as
$$
\cO_{E,{\bf T}}^*f=M_{F^S}^{[*]}f={\bf x}
$$
implying that the problem ${\bf OAP}_{\cH(S_0)}$ has a solution if and only if
${\bf x}\in\Ran M_{F^S}^{[*]}=\Ran P^\half$.

If we assume that the operator $P$ in \eqref{1.8**com} is strictly positive definite, we can use Lemma \ref{L:findBD}
and Corollary \ref{C:J-UniCompl} to construct a $\big( \sbm{P  & 0 \\ 0 & J_{\cF, \cU}}, \sbm{ P \otimes I_d & 0 \\ 0 & J_{\cY, \cU}}
\big)$-unitary operator $\bU = \sbm{ T & B \\ C & D}$ (where we set $C = \sbm{ E \\ N}$) which we can use as the
colligation matrix to define an abelianized transfer function
$$
\fA(\lambda) = D + C ( I - Z(\lam) A)^{-1} Z(\lam) B
$$
Then it is a matter of checking that an abelianized version of the algebra behind the proof of item (3) in Theorem \ref{T:4.1}
shows that both kernels
\begin{align}
K_\fA^{J_{\cF, \cU}, J_{\cY, \cU}}(\lam, \eta) = k_d(\lam, \eta)J_{\cY, \cU} - \fA(\lam) (k_d(\lam, \eta) J_{\cF, \cU}) \fA(\eta)^* \label{jKS'} \\
\wtil K_\fA^{J_{\cY, \cU}, J_{\cF, \cU}}(\lam, \eta)=  k_d(\lam, \eta)J_{\cF, \cU} - \fA(\eta)^* (k_d(\lam, \eta) J_{\cY, \cU} ) \fA(\lam)\notag
\end{align}
are positive.

Furthermore, the first kernel \eqref{jKS'} satisfies the identity
\begin{align}
&\frac{J_{_{\cY,\cU}}\!\! - {\mathfrak A}(\lam)J_{_{\cF,\cU}}{\mathfrak A}(\eta)^{*}}{1 -
        \langle \lam, \eta \rangle}
\!=\!\begin{bmatrix}E \\ N\end{bmatrix}\! (I - Z(\lam) T)^{-1} P^{-1}
          (I - T^{*} Z(\eta)^{*})^{-1}\! \begin{bmatrix}E^*\!\!\!\!\! & N^*\end{bmatrix},\label{2.30aip}
        \end{align}
the abelianization of \eqref{2.30}. Positivity of the kernels $K_\fA^{J_{\cF, \cU}, J_{\cY, \cU}}$ and $\wtil K_\fA^{J_{\cY, \cU}, J_{\cF, \cU}}$
(more precisely, of their compressions to $\cU$) guarantees that the operator $\mathfrak A_{22}(\lam)$ is invertible at every point $\lam\in\mathbb B^d$
and moreover, that
\begin{equation}  \label{int-contr}
\|\fA_{22}(\lam)^{-1} \fA_{21}(\lam) \| < 1 \text{ for } \lam\in\mathbb B^d.
\end{equation}
However we can follow the same operator-theoretic argument as used for the noncommutative Fock-space setting (not involving
evaluations at interior points in the ball) to show that the multiplier-norm analogue of \eqref{int-contr} holds, namely,
$$
\| M_{\fA_{22}}^{-1} M_{\fA_{21}} \| < 1,
$$
and that the Drury-Arveson linear-fractional map
$$
\cE \mapsto \fT_\fA[\cE] = (\fA_{11} \cE + \fA_{12})( \fA_{21} \cE + \fA_{22})^{-1}
$$
maps the Schur class $\cS_d(\cU, \cF)$ into the Schur class $\cS_d(\cU, \cY)$ via the Drury-Arveson space analogue of
Theorem \ref{T:lfts}.

\begin{rem}  \label{R:com-LFTinj}
{\em Let us note that Theorem \ref{T:LFTinjective} and Remark \ref{R:LFTinjective} formally apply to the present commutative situation
without change.  However Example \ref{E:nc-inj} is distinctively different in the commutative situation in that the commutative
analogue of \eqref{nc-row-ex}, namely the operator
$$
  M_{Z_{\rm com}} \colon \begin{bmatrix} g_1(\lam) \\ \vdots \\ g_d(\lam) \end{bmatrix} \mapsto \lam_1 g_1(\lam)
  + \cdots + \lam_d g_d(\lam)
$$
is definitely not injective as an operator from $\cH_{{\mathbb C}^d}(k_d)$ to $\cH(k_d)$. For example, for $d=2$, the Schur multiplier
$\sbm{ \lam_2 \\ -\lam_1}$ is in the kernel of $M_{Z_{\rm com}}$.
It is the case that $\begin{bmatrix} \lam_1 & \lam_2 \end{bmatrix}$ is the Beurling-Lax representer (in the sense of
McCullough-Trent \cite{mcctr}) for the shift-invariant subspace $\{ f \in \cH(k_d) \colon f(0) = 0\}$, but for this setting
one must take the Beurling-Lax representer  $Z_{\rm com}$ to be such that
$$
M_{Z_{\rm com}} \colon \cH_{{\mathbb C}^2}(k_2) \to \cH(k_2)
$$
to be only a partial isometry rather than an isometry (i.e., to be what we call McCT-inner).  This is a particular instance of the
homogeneous version of Theorem \ref{T:H2AIPsolcom} discussed below.
}\end{rem}

The parametrization of the solution set for the ${\bf OAP}_{\cH(S_0)}$ can be obtained along the same lines as in Section 3.4.
We first verify that the function $f_0$ defined by the formula \eqref{fonc} belongs to $\cH(S_0)$ and satisfies
\eqref{2.5aip} with norm equality $\|f_0\|_{\cH(S_0)}=\|P^{-\half} {\bf x}\|_{\cX}$. We next introduce the subspace
$\cN$ of $\cH(S_0)$ defined as in \eqref{2.11} and its orthogonal complement $\cN^\perp$ which is on the one hand the solution
set for the homogeneous problem \eqref{2.5aip} with ${\bf x}=0$, and on another hand a reproducing kernel
Hilbert space with reproducing kernel the abelianization of \eqref{2.12}. The same manipulations with $K_{\cN^\perp}$ as in
the non-commutative case, but based on the identity \eqref{2.30aip} rather than \eqref{2.30}, lead us to the formula
$$
K_{\cN^\perp}(\lam,\eta)=\begin{bmatrix}I & -S_0(\lam)\end{bmatrix}\frac{{\mathfrak A}(\lam)J_{_{\cF,\cU}} {\mathfrak A}(\eta)^{*}}{1-\langle \lam,\eta\rangle}
\begin{bmatrix}I \\ -S_0(\eta)^*\end{bmatrix}.
$$
With holomorphic operator-valued functions $u$ and $v$ defined by formulas \eqref{jan4}, we write the previous equality as
$$
K_{\cN^\perp}(\lam,\eta)= \frac{u(\lam)u(\eta)^{*}- v(\lam)v(\eta)^{*}}{1-\langle \lam,\eta\rangle},\quad \lam,\eta\in\mathbb B^d.
$$
The positivity of the latter kernel on $\mathbb B^d\times \mathbb B^d$ implies, by the commutative multivariable Leech theorem
(see \cite{ABK,bb4})  that there is a ${\mathcal E}_0 \in {\mathcal S}_{d}(\cU, \cF)$ so that $v= u{\mathcal E}_0$, from which we
recover $S_0$ as $S_0 = T_{{\mathfrak A}}[{\mathcal E}_0]$ and get the eventual representation formula
$$
K_{S_0}(\lam,\eta)=F^{S_0}(\lam)P^{-1}F^{S_0}(\eta)^{*}+u(\lam)K_{\mathcal E_0}(\lam,\eta)u(\eta)^{*}.
$$
As in the non-commutative case, $M_u: \; \cH(\cE_0)\to {\mathcal M}^\perp$ is a coisometry and hence
$M_u: \; \cH(\mathcal E_0)\to \mathcal H(S_0)$ is a partial isometry, while the restriction of $M_u$ to
the space $\mathcal M=\cH(\mathcal E_0)\ominus \Ker M_u$ maps this space isometrically into $\mathcal H(S_0)$.
Next we state the analog of Theorem \ref{T:AIPsol}.

\begin{thm}\label{T:AIPsolaip}
Let $(E,\bT)\in\cL(\cU,\cY)\times\cL(\cX)^d$ be an output stable pair with $\bT$ a commutative tuple, ${\bf x}$ a vector in $\cX$ and $S_0$ a Schur-class multiplier in $\in\cS_{d}(\cU,\cY)$. Assume that $P\succ 0$. Let $\mathfrak A(\lam)$ be a function on $\B^d$ subject to the identity \eqref{2.30aip} with $N$ as in \eqref{Ndefaip}, let  ${\mathcal E}_0 \in
        {\mathcal S}_{d}(\cU, \cF)$ be such that $S_0 = T_{{\mathfrak A}}[{\mathcal E}_0]$, and let
$$
u(\lam) = {\mathfrak A}_{11}(\lam) - S_0(\lam) {\mathfrak A}_{21}(\lam)\quad\mbox{and}\quad \mathcal M=\cH(\mathcal E_0)\ominus \Ker M_u.
$$
Then $f\in\cY\langle\langle z\rangle\rangle$ is a solution to the ${\bf OAP}_{\cH(S_0)}$
if and only if $f$ is of the form
\begin{equation}
f(\lam)=f_0(\lam)+u(\lam)h(\lam),\qquad h\in\cM,
\label{2.14aip}
\end{equation}
where $f_0$ is defined in \eqref{fonc} and $h$ is a free parameter from $\cM$.
Furthermore, for $f$ defined by \eqref{2.14aip}, we have
\begin{align*}
\|f\|^2_{\cH(S_0)}=\|f_0\|_{ \mathcal H(S_0)}^2+\|uh\|^2_{ \mathcal H(S_0)}=\|P^{-\half}{\bf x}\|^2_{\cX}+\|h\|^2_{\cH(\cE_0)}.
\end{align*}
\end{thm}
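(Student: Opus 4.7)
The plan is to assemble the pieces already developed in the paragraphs immediately preceding the statement, paralleling the argument for the noncommutative Theorem \ref{T:AIPsol}. Nothing fundamentally new is needed: the commutative analogs of Lemmas \ref{L:3.1}--\ref{L:3.2}, Corollary \ref{R:3.3} and Proposition \ref{P:4.2} have been indicated, the abelianized identity \eqref{2.30aip} is in hand, and the Drury--Arveson linear-fractional map $\fT_\fA$ carries $\cS_d(\cU,\cF)$ into $\cS_d(\cU,\cY)$.

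First I would verify that $f_0$ defined by \eqref{fonc} is a solution: since $f_0 = M_{F^{S_0}} P^{-1}{\bf x}$ and the commutative analog of Lemma \ref{L:3.2} gives $M_{F^{S_0}}^{[*]} M_{F^{S_0}} = P$ together with $M_{F^{S_0}}^{[*]} = \cO_{E,\bT}^*|_{\cH(S_0)}$, one obtains $(E^* f_0)^{\wedge L}(\bT^*) = PP^{-1}{\bf x} = {\bf x}$, and $\|f_0\|_{\cH(S_0)} = \|P^{-1/2}{\bf x}\|_\cX$ follows from the abelianized \eqref{1.17*}. Next, I would introduce $\cN = \{F^{S_0}x : x \in \cX\}$, isometrically embedded in $\cH(S_0)$, and observe that $\cN^\perp$ coincides with the solution set of the homogeneous ${\bf OAP}_{\cH(S_0)}$; thus the solution set of the inhomogeneous problem is the affine translate $f_0 + \cN^\perp$, and everything reduces to describing $\cN^\perp$.

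The main technical step is the factorization of the reproducing kernel
\[
K_{\cN^\perp}(\lam,\eta) = K_{S_0}(\lam,\eta) - F^{S_0}(\lam) P^{-1} F^{S_0}(\eta)^*
\]
through the function $u$. Substituting the defining formula for $K_{S_0}$ and $F^{S_0}$ and applying the identity \eqref{2.30aip} yields, exactly as in the noncommutative case,
\[
K_{\cN^\perp}(\lam,\eta) = \frac{u(\lam)u(\eta)^* - v(\lam)v(\eta)^*}{1 - \langle \lam,\eta\rangle},
\]
with $u,v$ defined by \eqref{jan4}. Positivity of this kernel combined with the commutative operator-valued Leech theorem from \cite{ABK,bb4} yields $\cE_0 \in \cS_d(\cU,\cF)$ with $v = u\cE_0$; comparing with \eqref{jan4} recovers $S_0 = \fT_\fA[\cE_0]$, and substituting back produces
\[
K_{\cN^\perp}(\lam,\eta) = u(\lam) K_{\cE_0}(\lam,\eta) u(\eta)^*.
\]
By the commutative analog of Proposition \ref{P:cmult}(2), $M_u : \cH(\cE_0) \to \cN^\perp$ is a coisometry, hence $M_u : \cH(\cE_0) \to \cH(S_0)$ is a partial isometry with range $\cN^\perp$, and its restriction to $\cM = \cH(\cE_0) \ominus \Ker M_u$ is an isometry onto $\cN^\perp$.

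The parametrization then follows at once. For the forward direction, any $f = f_0 + uh$ with $h \in \cM$ is a solution because $f_0$ is and $uh \in \cN^\perp$ lies in the homogeneous solution set. For the converse, given a solution $f$, the difference $f - f_0$ lies in $\cN^\perp$ and the unitary $M_u|_\cM : \cM \to \cN^\perp$ produces a unique $h \in \cM$ with $uh = f - f_0$. The norm identity
\[
\|f\|^2_{\cH(S_0)} = \|f_0\|^2_{\cH(S_0)} + \|uh\|^2_{\cH(S_0)} = \|P^{-1/2}{\bf x}\|^2_\cX + \|h\|^2_{\cH(\cE_0)}
\]
follows from the orthogonal decomposition $\cH(S_0) = \cN \oplus \cN^\perp$ together with the two established isometric identifications. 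The main potential obstacle is the invocation of the commutative Leech theorem at the operator-valued multiplier level on the Drury--Arveson space, but this is exactly the content of \cite{ABK,bb4}; a secondary routine check is that Proposition \ref{P:cmult}, stated in \cite{bbbook} for the noncommutative formal setting, admits the direct commutative reformulation used here (this is standard de Branges--Rovnyak theory on $\B^d$).
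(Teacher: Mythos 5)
Your proposal is correct and follows essentially the same route as the paper. The paper does not even place the argument in a formal proof environment; it develops exactly these steps (verification that $f_0$ solves the problem with $\|f_0\|_{\cH(S_0)} = \|P^{-1/2}\mathbf{x}\|_\cX$, identification of $\cN^\perp$ with the homogeneous solution set, factorization of $K_{\cN^\perp}$ through \eqref{2.30aip}, the commutative Leech theorem to produce $\cE_0$ with $v = u\cE_0$, and the partial-isometry property of $M_u$) in the paragraphs immediately preceding the theorem statement, and your reconstruction tracks them faithfully, including the correct citations for the commutative Leech theorem and the appeal to the commutative analogue of Proposition~\ref{P:cmult}(2).
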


Let us say that a Schur-class multiplier $S\in\mathcal S_d(\cF,\cY)$ is {\em McCT-inner} (referring to the authors of the seminal paper \cite{mcctr})
if the operator $M_S: \, \cH_\cF(k_d)\to \cH_\cY(k_d)$ is a partial isometry.

\smallskip

If we specify Theorem \ref{T:AIPsolaip} to the case where $\cU=\{0\}$ (as we did in Section 3.5 in the non-commutative setting),
then $\mathfrak A$ collapses to the analytic function $\Phi(\lam)$ given by the formula \eqref{2.15nc}. The assumption
$P:=\cO_{E,{\bf T}}^*\cO_{E,{\bf T}}\succ 0$ still implies that ${\bf T}$ is strongly stable, i.e., that
$$
\lim_{N\to\infty}\sum_{\bn\in\mathbb Z_+^d: |\bn|=N}
\frac{\bn !}{|\bn|!}\|{\bf T}^{\bn}x\|^2_{\cH_{_\cY}(k_d)}=0\quad\mbox{for all} \; \; x\in\cX,
$$
which this time guarantees only $\Phi$ be McCT-inner; see \cite[Section 5]{bbf2}. The rest is the same as in Theorem \ref{T:H2AIPsol}.

\begin{thm}\label{T:H2AIPsolcom}
Given an output stable, exactly observable pair $(E,{\bf T})\in\cL(\cX,\cY)\times \cL(\cX)^d$
(i.e., $P:=\cO_{E,{\bf T}}^*\cO_{E,{\bf T}}\succ 0$) with $\bT$ a commutative tuple,
there exists an auxiliary Hilbert space $\cF$ and a  McCT-inner function $\Phi(\lam)\in {\mathcal S}_{d}(\cF, \cY)$
such that
$$
\frac{I_{\cY}-\Phi(\lam)\Phi(\eta)^{*}}{1-\langle \lam,\eta\rangle}=E(I - Z(\lam) T)^{-1} P^{-1}(I - T^{*} Z(\eta)^{*})^{-1}E^*.
$$
Furthermore, for a given vector ${\bf x}\in\cX$, a function $f\in \cH_{\cY}(k_d)$ satisfies the interpolation condition
$$
(E^*f)^{\wedge L}(\bT^{*}):= \cO_{E, \bT}^{*}f={\bf x}
$$
if and only if it is of the form
$$
f(\lam)=E(I-Z(\lam)T)^{-1}P^{-1}{\bf x}+\Phi(\lam)h(\lam),\quad h\in\cH_{\cF}(k_d).
$$
\end{thm}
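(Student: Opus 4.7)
The plan is to derive Theorem \ref{T:H2AIPsolcom} as a specialization of Theorem \ref{T:AIPsolaip}, parallel to how Theorem \ref{T:H2AIPsol} was obtained from Theorem \ref{T:AIPsol} in the Fock-space setting. I would take $\cU=\{0\}$ and $S_0=0\in\cS_d(\{0\},\cY)$, so that $\cH(S_0)=\cH_\cY(k_d)$, the signature operators $J_{_{\cY,\cU}}$ and $J_{_{\cF,\cU}}$ reduce to $I_\cY$ and $I_\cF$, and the operator $N$ in \eqref{Ndefaip} vanishes. Consequently $P=\cO_{E,\bT}^*\cO_{E,\bT}$, and the indefinite Schur-class symbol $\fA$ furnished by (the Drury-Arveson analog of) Theorem \ref{T:4.1} collapses to the single block
\begin{equation*}
\Phi(\lam):=\fA_{11}(\lam)=D_1+E(I-Z(\lam)T)^{-1}Z(\lam)B_1,
\end{equation*}
where $\sbm{B_1\\ D_1}\colon\cF\to\sbm{\cX^d\\ \cY}$ is an injective solution of the specialized Cholesky factorization problem obtained from \eqref{10a}. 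The claimed kernel identity is then immediate by reading off the $(1,1)$-block of \eqref{2.30aip}.

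Next I would verify that $\Phi$ is McCT-inner. Following the argument in the proof of Theorem \ref{T:H2AIPsol}, I would use the Stein identity \eqref{2.8}, which now reduces to $P-\sum_j T_j^*PT_j=E^*E$. Substituting this into the abelianized expression $\cO_{E,\bT}^*\cO_{E,\bT}=\sum_{\bn}\frac{|\bn|!}{\bn!}\bT^{*\bn}E^*E\bT^\bn$ and telescoping yields
\begin{equation*}
P=\cO_{E,\bT}^*\cO_{E,\bT}=P-\lim_{n\to\infty}\sum_{|\bn|=n}\tfrac{|\bn|!}{\bn!}\bT^{*\bn}P\bT^\bn
\end{equation*}
in the weak operator topology, with binomial weights arising from $\#\{\alpha\in\free:\ba(\alpha)=\bn\}=|\bn|!/\bn!$. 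Hence the remainder vanishes in the limit, and since $P\succ 0$ this gives $\lim_{n\to\infty}\sum_{|\bn|=n}\frac{|\bn|!}{\bn!}\|\bT^\bn x\|_\cX^2=0$ for every $x\in\cX$, i.e., $\bT$ is strongly stable. The commutative analog of \cite[Theorem 3.2.11]{bbbook}, recorded in \cite[Section 5]{bbf2}, then guarantees that $\Phi$ is McCT-inner.

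The parametrization follows by specializing Theorem \ref{T:AIPsolaip}. With $\cU=\{0\}$, the Schur-class function $\cE_0$ satisfying $S_0=\fT_\fA[\cE_0]$ is the zero map, so $\cH(\cE_0)=\cH_\cF(k_d)$, $u=\Phi$, and $\cM=\cH_\cF(k_d)\ominus\Ker M_\Phi$; moreover $f_0(\lam)=E(I-Z(\lam)T)^{-1}P^{-1}\bx$ since $S_0 N=0$. Replacing the parameter $h\in\cM$ by an arbitrary $h\in\cH_\cF(k_d)$ does not alter the solution set, because any component of $h$ lying in $\Ker M_\Phi$ contributes zero to $\Phi h$. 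This yields the stated parametrization.

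The main obstacle is the passage from strong stability of $\bT$ to an inner-type property of the transfer function $\Phi$: in the Fock space case one extracts a strictly inner $\Phi$, whereas in the Drury-Arveson setting only the weaker McCT-inner property (partial isometry of $M_\Phi$) is available. Handling this distinction correctly, and then observing that the partial-isometry property still suffices to parametrize the full solution set (albeit non-uniquely), is the one place where the argument genuinely diverges from its noncommutative prototype.
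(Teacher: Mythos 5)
Your argument follows the paper's route exactly: specialize Theorem \ref{T:AIPsolaip} to $\cU=\{0\}$, read off $\Phi=\fA_{11}$ and the kernel identity from the abelianized \eqref{2.30aip}, derive strong stability of $\bT$ by telescoping the Stein identity against the commutative Gramian $\sum_{\bn}\frac{|\bn|!}{\bn!}\bT^{*\bn}E^*E\bT^\bn$ (your binomial weights $|\bn|!/\bn!$, coming from $\#\ba^{-1}(\bn)$, are indeed the correct ones and give a condition that implies the one displayed in the paper), and invoke \cite[Section 5]{bbf2} to get $\Phi$ McCT-inner. The one detail you supply that the paper's ``the rest is the same as in Theorem \ref{T:H2AIPsol}'' silently leaves to the reader is the observation that, since $\Phi$ is only McCT-inner rather than strictly inner, the parameter $h$ can be enlarged from $\cM=\cH_\cF(k_d)\ominus\Ker M_\Phi$ to all of $\cH_\cF(k_d)$ without changing the range of $h\mapsto \Phi h$ (at the cost of injectivity of the parametrization); this is exactly right and worth stating.
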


The homogeneous version of the latter theorem contains the $\cH_\cY(k_d)$-version of the Beurling-Lax theorem \cite{mcctr}. Still writing ${\bf R}_z$ for the $d$-tuple of coordinate-variable multipliers $R_{\lam_j}$ in $\cH_\cY(k_d)$, we note that their adjoints are now given by formulas
$$
R_{\lam_j}^*: \; \sum_{\bn\in\mathbb Z_+^d}f_{\bn}\lam^{\bn}\mapsto \sum_{\bn\in\mathbb Z_+^d}f_{\bn+e_j}\lam^{\bn}\quad\mbox{for}\quad
j=1,\ldots,d
$$
where $e_j$ is the element in $\mathbb Z_+^d$ having the $j$-th partial index equal to one and all other partial
indices equal to zero. If we let ${\bf ev}_0: \, f(\lam)\to f(0)$ denote the zero-evaluation operator on $\cH_\cY(k_d)$, then
it is easily verified that ${\bf ev}_0{\bf R}_\lam^{\bn} \colon \sum_{\bn' \in {\mathbb Z}^d_+} f_{\bn'}  \lam^{\bn'} \to f_{\bn}$ for all
$\bn \in \mathbb Z_d^+$ and subsequently, that
the observability operator $\cO_{{\bf ev}_0,{\bf R}_\lam}$ equals the identity operator on $\cH_\cY(k_d)$. Indeed,
for any $f\in\cH_\cY(k_d)$,
we have
$$
\cO_{{\bf ev}_0,{\bf R}_\lam^*}f=\sum_{\bn\in\mathbb Z_+^d} ({\bf ev}_0 {\bf R}_\lam^{*{\bn}}f)\lam^{\bn}=
\sum_{\bn\in\mathbb Z_+^d} f_{\bn}\lam^{\bn}=f.
$$
If $\cM$ is a closed subspace of $\cH_\cY(k_d)$ which is ${\bf R}_\lam$-invariant,
then the restricted output pair $(E, {\bf T}) \in \cL(\cX, \cY) \times \cL(\cX)^d$ given by
$$
 \cX = \cM^\perp, \quad E = {\bf ev}_0|_\cX, \quad {\bf T} = {\bf R}_\lam^*|_\cX
$$
is exactly observable and $\cM^\perp  = \Ran \cO_{E,{\bf T}}$. Therefore,
$\cM = \Ker \cO_{E,{\bf T}}^*$ coincides with the solution set of the
homogeneous Operator Argument interpolation Problem ${\bf OAP}_{\cH_\cY(k_d)}$ with interpolation conditions
$$
(E^*f)^{\wedge L}(\bT^{*}):= \cO_{E, \bT}^{*}f=0.
$$
Then it follows by Theorem \ref{T:H2AIPsolcom} that {\em there exist a Hilbert space $\cF$ and a
McCT-inner multiplier $\Phi\in {\mathcal S}_{d}(\cF, \cY)$
such that  $\cM=\Phi\cdot \cH_\cF(k_d)$.}

\section{An operator theoretical view}
\label{S:OTprelim}

In this section we present a general, purely operator theoretical perspective on the problems considered above, relying on what we will call a generalized de Branges-Rovnyak space. Our approach in this section is to view the problem in the context of a Douglas Factorization Problem with respect to the lifted norms from the generalized de Branges-Rovnyak spaces. For that purpose we also derive some results on the classical Douglas factorization problem.

\subsection{Generalized de Branges-Rovnyak spaces}

With a given contraction operator $T\in\cL(\cU,\cY)$ we may associate the operator range of the positive semidefinite operator $I_{\cY}-TT^*\succeq 0$:
$$
\cH(T):=\Ran(I-TT^*)^{\frac{1}{2}}\subset \cY
$$
with the lifted norm
\begin{equation}
\|(I-TT^*)^{\frac{1}{2}}y\|_{\cH(T)}=\|(I-\pi)y \|_{\cY}
\label{lifted}
\end{equation}
where $\pi$ is the orthogonal projection onto $\Ker(I-TT^*)^{\frac{1}{2}}$.
It follows from \eqref{lifted} that $\|y\|_{\cH(T)}\ge
\|y\|_{\cY}$ for every $y\in\cH(T)$ and thus $\cH(T)$ is contractively included in $\cY$.
Upon letting $y= (I-TT^*)^{\frac{1}{2}}y'$ in the last formula we get
\begin{equation}
\|(I-TT^*)y'\|_{\cH(T)}=\langle (I-TT^*)y', \, y'\rangle_{\cY}.
\label{lifted'}
\end{equation}
The original characterization of $\cH(T)$ as the space of all vectors in $y\in \cY$ such that
$$
\kappa(y):=\sup_{u\in\cU}\big\{\|y+Tu\|^2_{\cY}-\|u\|^2_{\cU}\big\}
$$
is finite and the identity $\|y\|^2_{\cH(T)}=\kappa(y)$ is due to de Branges and Rovnyak \cite{dbr2}.
Let us note that de Branges and Rovnyak worked with the special case where $\cU$ and $\cY$ are replaced by
Hardy spaces $H^2_\cU$ and $H^2_\cY$ and the operator $T \colon H^2_\cU \to H^2_\cY$ is the operator
$T = M_S \colon f(\lam) \mapsto S(\lam) f(\lam)$ of multiplication by a Schur-class function $S \in \cS(\cU, \cY)$.
The spaces $\cH(T)$ (operator- rather than function-theoretic) were proposed by Sarason \cite{sarasonsubh} and will here be referred to as {\em generalized de Branges-Rovnyak spaces.}

\subsection{The solutions to Douglas Factorization Problem}
\label{SubS:DFP}

Consider $A\in\cL(\cY,\cX)$ and $B\in\cL(\cU,\cX)$. The following well-known result due to Douglas \cite{Douglas} describes when there exists an operator $Y\in\cL(\cU,\cY)$ that satisfies
\begin{equation}\label{2.1}
AY = B \quad\text{\em and}\quad \|Y\| \le 1.
\end{equation}
\begin{lem}\label{L:Douglas}
There exists a $Y\in\cL(\cU,\cY)$ satisfying \eqref{2.1} if and only
if $AA^*\succeq BB^*$. In this case, there exists a unique
$Y$ satisfying \eqref{2.1} and the additional constraints
$\Ran Y\subset \Ran A^*$
and $\Ker Y= \Ker B$.
\end{lem}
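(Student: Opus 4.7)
The plan is to follow the standard Douglas template: derive necessity from $YY^{*}\preceq I$, construct the canonical solution explicitly as the adjoint of a contraction defined on $\Ran A^{*}$, and read off uniqueness from the range constraint. The whole argument is driven by the single observation that the operator inequality $AA^{*}\succeq BB^{*}$ is equivalent to the pointwise estimate $\|B^{*}x\|_{\cU}^{2}\le \|A^{*}x\|_{\cY}^{2}$ for all $x\in\cX$.

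For necessity I would observe that if $Y\in\cL(\cU,\cY)$ satisfies $AY=B$ and $\|Y\|\le 1$, then $YY^{*}\preceq I_{\cU}$, so
\begin{equation*}
BB^{*} = A\,YY^{*}\,A^{*} \preceq AA^{*}.
\end{equation*}

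For sufficiency I would construct the solution by hand. Assume $AA^{*}\succeq BB^{*}$, and on the linear submanifold $\Ran A^{*}\subseteq \cY$ define $Z\colon A^{*}x\mapsto B^{*}x$. The pointwise form of the hypothesis simultaneously gives (a) well-definedness of $Z$, since $A^{*}x_{1}=A^{*}x_{2}$ forces $B^{*}(x_{1}-x_{2})=0$, and (b) contractivity of $Z$. I would then extend $Z$ continuously to $\overline{\Ran A^{*}}$ and by $0$ on the complementary subspace $(\overline{\Ran A^{*}})^{\perp}=\Ker A$, obtaining a contraction in $\cL(\cY,\cU)$. Setting $Y:=Z^{*}$ yields $\|Y\|\le 1$ and $Y^{*}A^{*}x=B^{*}x$ for every $x\in\cX$, i.e.\ $AY=B$. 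This step—the well-definedness of $Z$ on $\Ran A^{*}$—is the only place where the operator inequality is used in an essential way, and I expect it to be the main conceptual point of the proof.

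It then remains to verify the additional properties and uniqueness. Because $Z$ vanishes on $\Ker A$, one has $\Ran Y=\Ran Z^{*}\subseteq (\Ker Z)^{\perp}\subseteq \overline{\Ran A^{*}}$; and because $\Ran Z=\Ran B^{*}$ by construction on the dense manifold, one has $\Ker Y=(\Ran Z)^{\perp}=(\Ran B^{*})^{\perp}=\Ker B$. For uniqueness, if $Y_{1}$ and $Y_{2}$ are two solutions of \eqref{2.1} with ranges inside $\overline{\Ran A^{*}}$, then $A(Y_{1}-Y_{2})=0$ while $\Ran(Y_{1}-Y_{2})\subseteq \overline{\Ran A^{*}}=(\Ker A)^{\perp}$, forcing $Y_{1}=Y_{2}$.
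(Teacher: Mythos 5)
Your proof is correct and is exactly the standard Douglas argument; the paper itself does not prove this lemma but simply cites Douglas \cite{Douglas}, so there is no in-paper proof to compare against. Two small remarks. First, what your construction actually gives is $\Ran Y\subseteq\overline{\Ran A^{*}}$ (equivalently $\Ran Y\perp\Ker A$), not $\Ran Y\subseteq\Ran A^{*}$ as literally written in the statement; the latter fails in general (take $A=B$ self-adjoint, injective, with dense non-closed range: then the canonical $Y$ is $I_\cY$, whose range is all of $\cY$), so the paper's $\Ran A^{*}$ should be read as $\overline{\Ran A^{*}}$, and your uniqueness argument correctly uses precisely this closed-range condition. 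Second, the step ``$\Ran Z=\Ran B^{*}$ by construction on the dense manifold'' is slightly loose, since the continuous extension of $Z$ to $\overline{\Ran A^{*}}$ may produce additional limit points; what is true, and all that is needed, is $\Ran B^{*}\subseteq\Ran Z\subseteq\overline{\Ran B^{*}}$, so that $(\Ran Z)^{\perp}=(\Ran B^{*})^{\perp}=\Ker B$, which is exactly the conclusion you draw.
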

The next proposition contains several characterizations of the operators $Y\in\cL(\cU,\cY)$ that satisfy \eqref{2.1}.
\begin{prop}\label{L:2.1}
Given $A\in\cL(\cU,\cX)$ and $B\in\cL(\cY,\cX)$, let us assume that
\begin{equation}\label{defp}
Q:=A A^{*}-BB^{*}\succeq 0.
\end{equation}
Then there exist unique contractions
\[
X_1\in\cL(\cU,\overline{\Ran} A) \quad \mbox{and} \quad
X_2\in\cL(\cY,\overline{\Ran} A)
\]
so that
\begin{equation}\label{2.2u}
(AA^*)^{\frac{1}{2}}X_1=B,\ \
(AA^*)^{\frac{1}{2}}X_2=A, \ \ \Ker X_1=\Ker B,\ \
\Ker X_2=\Ker A,
\end{equation}
with $X_2$ being a coisometry. Given $Y\in\cL(\cU,\cY)$, define
$$
F_{_{Y}}:=A^*-YB^*\in \cL(\cX,\cY).
$$
Then the following statements are equivalent:
\begin{enumerate}
\item $Y$ satisfies conditions \eqref{2.1}.
\item The operator
$$
\mathbb P=\begin{bmatrix} Q & F_{_{Y}}^* \\ F_{_{Y}} & I_{\cY}-YY^* \end{bmatrix}: \;
\begin{bmatrix}\cX\\\cY\end{bmatrix}\to\begin{bmatrix}\cX\\\cY\end{bmatrix}
$$
is positive semidefinite, or, which is the same, $Y$ is a contraction and $F_{_{Y}}$ maps $\cX$ into
$\cH(Y)$ with the property
$$
\|F_{_{Y}}x\|_{\cH(Y)}\le \|Q^{\frac{1}{2}}x\|_\cX \quad\mbox{for
every}\quad x\in\cX.
$$
\item $Y$ is a contraction and $F_{_{Y}}$ maps $\cX$ into $\cH(Y)$ with the property
\begin{equation}
\|F_{_{Y}}x\|_{\cH(Y)}= \|Q^{\frac{1}{2}}x\|_\cX \quad\mbox{for every}\quad x\in\cX.
\label{2.20n}
\end{equation}
\item $Y$ is of the form
\begin{equation}\label{2.4u}
Y=X_2^*X_1+(I-X_2^*X_2)^{\frac{1}{2}}K (I-X_1^*X_1)^{\frac{1}{2}}
\end{equation}
where $X_1$ and $X_2$ are defined as in \eqref{2.2u} and
where the parameter $K$ is an arbitrary contraction from
$\overline{\Ran}(I-X_1^*X_1)$ into $\overline{\Ran}(I-X_2^*X_2)$.
\end{enumerate}
Moreover, there is a unique operator $Y\in\cL(\cU,\cY)$  satisfying \eqref{2.1} if and only if
$X_1$ is isometric on $\cU$ or $X_2$ is isometric on $\cY$. Moreover, for $Y$ as in \eqref{2.4u} we have
\begin{equation}\label{min norm}
\|Yh\|^2=\|X_2^*X_1 h\|^2+\|(I-X_2^*X_2)^{\frac{1}{2}}K
(I-X_1^*X_1)^{\frac{1}{2}}h\|^2, \quad h\in\cU
\end{equation}
so that $X_2^*X_1$ is the minimal norm solution to the problem \eqref{2.1}.
\end{prop}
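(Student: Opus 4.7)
The plan is to prove the equivalences in cyclic fashion, $(1){\Rightarrow}(3){\Rightarrow}(2){\Rightarrow}(1)$, and then to handle the parametrization $(4)$ together with the uniqueness criterion and the norm identity \eqref{min norm} as a separate block. At the outset the operators $X_{1}$ and $X_{2}$ are produced by two applications of Lemma \ref{L:Douglas}: the hypothesis $BB^{*}\preceq AA^{*}$ supplies $X_{1}$ with $(AA^{*})^{\half}X_{1}=B$, and the trivial inequality $AA^{*}\preceq AA^{*}$ yields $X_{2}$. The coisometry property $X_{2}X_{2}^{*}=I_{\overline{\Ran}A}$ is forced by comparing $(AA^{*})^{\half}X_{2}X_{2}^{*}(AA^{*})^{\half}$ with $AA^{*}$ and using injectivity of $(AA^{*})^{\half}$ on $\overline{\Ran}A$.

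For $(1){\Rightarrow}(3)$: substituting $B=AY$ collapses $F_{_{Y}}=A^{*}-YB^{*}$ to $(I-YY^{*})A^{*}$, which automatically lies in $\Ran(I-YY^{*})^{\half}=\cH(Y)$, and the lifted-norm identity \eqref{lifted'} reduces $\|F_{_{Y}}x\|_{\cH(Y)}^{2}$ to $\langle A(I-YY^{*})A^{*}x,x\rangle_{\cX}=\langle Qx,x\rangle_{\cX}$, giving \eqref{2.20n}. The implication $(3){\Rightarrow}(2)$ is immediate once one observes, via the standard Schur-complement characterization of positive $2\times 2$ block operators, that $\mathbb P\succeq 0$ is equivalent to $Y$ being a contraction, $\Ran F_{_{Y}}\subset\cH(Y)$, and the lifted-norm bound of (2). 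The crux is $(2){\Rightarrow}(1)$: the interpolation identity $AY=B$ is not visible from this characterization and must be squeezed out by testing $\mathbb P\succeq 0$ against the special vector $v=\sbm{x\\-A^{*}x}$. A direct expansion simplifies $\mathbb Pv$ to $\sbm{B\\Y}(Y^{*}A^{*}x-B^{*}x)$ and collapses $\langle\mathbb Pv,v\rangle$ to $-\|B^{*}x-Y^{*}A^{*}x\|^{2}$, which under $\mathbb P\succeq 0$ forces $B=AY$; the contractivity $\|Y\|\le 1$ is immediate from the bottom-right block $I-YY^{*}\succeq 0$.

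For the parametrization $(4)$, applying $(AA^{*})^{\half}$ to $AY=B$ reduces the interpolation constraint to $X_{2}Y=X_{1}$. The orthogonal decomposition $Y=X_{2}^{*}X_{2}Y+(I-X_{2}^{*}X_{2})Y$ combined with $X_{2}X_{2}^{*}=I$ and $X_{2}Y=X_{1}$ produces $X_{2}^{*}X_{2}Y=X_{2}^{*}X_{1}$, so the ``free'' part is $Z:=(I-X_{2}^{*}X_{2})^{\half}Y$. The bound $Z^{*}Z=Y^{*}Y-X_{1}^{*}X_{1}\preceq I-X_{1}^{*}X_{1}$ follows from $\|Y\|\le 1$, and a second application of Lemma \ref{L:Douglas} then supplies a contraction $K$ from $\overline{\Ran}(I-X_{1}^{*}X_{1})$ into $\overline{\Ran}(I-X_{2}^{*}X_{2})$ with $Z=K(I-X_{1}^{*}X_{1})^{\half}$, which rearranges to \eqref{2.4u}. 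The converse---that every $Y$ of the form \eqref{2.4u} is contractive and satisfies $AY=B$---is a short computation built on the vanishing identity $X_{2}(I-X_{2}^{*}X_{2})^{\half}=0$. The same identity makes the two summands of $Yh$ orthogonal in $\cY$, from which \eqref{min norm} follows and $X_{2}^{*}X_{1}$ (the choice $K=0$) is the minimal-norm solution. Uniqueness is read off directly from \eqref{2.4u}: the free summand vanishes for every admissible $K$ precisely when $(I-X_{1}^{*}X_{1})^{\half}=0$ or $(I-X_{2}^{*}X_{2})^{\half}=0$, i.e., when $X_{1}$ or $X_{2}$ is isometric on its domain.

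The main obstacle is the step $(2){\Rightarrow}(1)$: the block-positivity characterization of $\mathbb P\succeq 0$ produces only a range inclusion and a norm bound, and does not visibly encode $AY=B$. The nonobvious test vector $v=\sbm{x\\-A^{*}x}$ is what recovers the interpolation identity and closes the loop; everything else in the argument is a controlled combination of Lemma \ref{L:Douglas}, the lifted-norm formula \eqref{lifted'}, and elementary orthogonality coming from the coisometric property of $X_{2}$.
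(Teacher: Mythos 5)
Your proof is correct and covers all claims. The constructions of $X_1$, $X_2$ and the coisometry argument, as well as the $(1)\Rightarrow(3)$ computation via the lifted-norm formula \eqref{lifted'}, coincide with what the paper does. The genuine divergence is in how the equivalence of $(1)$, $(2)$, $(4)$ is established: the paper outsources this to \cite[Lemma 2.2]{bbt3}, whose Schur-complement argument proceeds through the $3\times 3$ block condition \eqref{2.3u}, with $\mathbb P$ arising as the Schur complement with respect to the middle diagonal block $AA^*$. Your route is more elementary and fully self-contained. In particular, your $(2)\Rightarrow(1)$ step is a nice observation: testing $\mathbb P\succeq 0$ against the specific vector $v=\sbm{x\\-A^*x}$ yields $\langle \mathbb P v,v\rangle=-\|B^*x-Y^*A^*x\|^2$, which extracts the interpolation identity $AY=B$ directly from the $2\times 2$ positivity without ever embedding into the larger $3\times 3$ matrix. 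Likewise, your derivation of $(4)$ from the reduction $X_2 Y=X_1$, the orthogonal split $Y=X_2^*X_1+(I-X_2^*X_2)Y$, and a second invocation of Douglas is a clean, direct substitute for the cited argument. Your orthogonality observation (via $(I-X_2^*X_2)X_2^*=0$) also gives a cleaner derivation of \eqref{min norm} than the paper's terse remark, which incidentally misidentifies $(I-X_2^*X_2)^{1/2}$ as the projection onto $\cY\ominus\Ker A$ when in fact it projects onto $\Ker A=\Ker X_2$; your computation sidesteps this altogether. One small caution: the domains and codomains of $A$ and $B$ as stated in the proposition appear inconsistent with Lemma~\ref{L:Douglas} (under the stated $A\in\cL(\cU,\cX)$, $B\in\cL(\cY,\cX)$, the composition $AY$ is ill-formed for $Y\in\cL(\cU,\cY)$); you silently use the convention of Lemma~\ref{L:Douglas} ($A\in\cL(\cY,\cX)$, $B\in\cL(\cU,\cX)$), which is surely the intended one.
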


\begin{proof}
The existence and uniqueness of the contractions $X_1$ and $X_2$ satisfying \eqref{2.2u} is a direct consequence
of Lemma \ref{L:Douglas}. That $X_2$ is a coisometry can be seen as a consequence of the identity
$$
  (AA^*)^{\frac{1}{2}} (I_{\overline{\Ran} A}  - X_2 X_2^*) (A A^*)^{\frac{1}{2}}  = 0
$$
which in turn is a consequence of the second equation in \eqref{2.2u}.

\smallskip

The equivalence of (1), (2) and (4) was established in \cite[Lemma 2.2]{bbt3} via
Schur-complement arguments.  The central observation there was that
$Y \in \cL(\cU, \cY)$ satisfies \eqref{2.1} if and only if it satisfies the condition
\begin{equation}\label{2.3u}
\begin{bmatrix}  I_{\cU} & B^{*} & Y^{*} \\ B & A A^{*} & A \\ Y & A^{*} & I_{\cY} \end{bmatrix}\succeq 0.
\end{equation}
The implication (3) $\Rightarrow$ (2) is trivial. We verify (1)$\Rightarrow$(3) as follows.
If $Y$ satisfies conditions \eqref{2.1}, then for every $x\in\cX$ we have
$$
F_{_{Y}}x=(A^*-YB^*)x=(A^*-YY^*A^*)x=(I-YY^*)A^*x.
$$
We now apply the formula \eqref{lifted'} to $y'=A^*x$ and then make subsequent use of \eqref{2.1} and \eqref{defp} to get
\begin{align}
\|F_{_{Y}}x\|_{\cH(Y)}&=\langle (I-YY^*)A^*x, \, A^*x\rangle_{\cY}\notag\\
&=\langle A(I-YY^*)A^*x, \, x\rangle_{\cX}\notag\\
&=\langle (AA^*-BB^*)x, \, x\rangle_{\cX}=\langle Qx, \, x\rangle_{\cX}=\|Q^\frac{1}{2}x\|_{\cX},\notag
\end{align}
which is equivalent to \eqref{2.20n}.

Finally, to see \eqref{min norm} simply note that $(I-X_2^*X_2)^{\frac{1}{2}}$ is the
orthogonal projection onto $\cY\ominus\Ker A=\cY\ominus\Ker X_1$ since $X_2$ is a coisometry.
\end{proof}
\begin{rem}  \label{R:MatrixCompPb}
{\rm The equivalence of conditions \eqref{2.1} and \eqref{2.3u} means that
the Douglas factorization problem can be reformulated as a {\em matrix completion problem}:
{\em given a partially defined matrix
\[
\mat{ I_{\cH_1} & B^* & ?^* \\ B  & AA^* & A \\ ? & A^* &  I_{\cH_2 }},
\]
find an operator $Y$ such that plugging in $Y$ for ? leads to a positive semidefinite operator matrix.}
This at various times has been an active area of research in its own right (see e.g.~\cite{Johnson90}).}
\end{rem}

\subsection{Douglas Factorization Problem in generalized de Branges-Rovn\-yak spaces}
\label{SubS:DFP-dBR}

We next consider a problem similar to \eqref{2.1} but now $Y$ is contractive with respect to the norms induced by two generalized  de Branges-Rovnyak spaces: {\em given operators $A\in\cL(\cY,\cX)$, $B\in\cL(\cU,\cX)$ and contractions
$T_1\in\cL(\wtil{\cU},\cU)$ and $T_2\in\cL(\wtil{\cY},\cY)$, find an operator $Y: \; \cH(T_1)\to \cH(T_2)$ such that
\begin{equation}\label{dBR-Douglas}
AY=B|_{\cH(T_1)}\quad\mbox{and}\quad \|Y\|\leq 1.
\end{equation}}
If $A$ and $B$ are a priori considered as operators in $\cL(\cH(T_2),\cX)$ and $\cL(\cH(T_1),\cX)$ respectively (or, equivalently, $T_1=0$ and $T_2=0$), then this is the classical Douglas Factorization Problem from the previous subsection. The following lemma shows that the case where $A\in\cL(\cY,\cX)$,
$B\in\cL(\cU,\cX)$ also can be translated to the classical case.
Given a contraction $T\in\cL(\wtil{\cH},\cH)$ and an operator $C\in\cL(\cH,\cX)$, the range of the operator
\begin{equation}\label{Fc}
 F_{{_C}} =(I-TT^*)C^*: \; \cX\to \cH
\end{equation}
is contained in $\cH(T)\subset \cH$, and therefore, $F_{{_C}}$ can also be viewed as an operator in $\cL(\cX,\cH(T))$.
Following Remark \ref{R:3.0}, we write $F_{{_C}}^*$ for the adjoint of $F_{{_C}}$ in $\cL(\cH,\cX)$ and
$F_{{_C}}^{[*]}$ for the adjoint of $F_{{_C}}$ in $\cL(\cH(T),\cX)$.
\begin{lem}\label{L:dBR-renorm}
Given $C\in\cL(\cH,\cX)$ and a contraction $T\in\cL(\wtil{\cH},\cH)$, define $F_{{_C}}$ as in \eqref{Fc}. Then
\begin{equation}
F_{{_C}}^{[*]}g=Cg \; \; \mbox{for all} \; \; g\in\cH(T)\quad\mbox{and}\quad F_{{_C}}^{[*]}F_{{_C}}=C(I-TT^*)C^*.
\label{fc1}
\end{equation}
\end{lem}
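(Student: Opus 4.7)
The plan is to reduce everything to the key identity
\[
\langle g,\, (I-TT^*)y' \rangle_{\cH(T)} = \langle g,\, y' \rangle_{\cH} \qquad (g\in\cH(T),\ y'\in\cH),
\]
which serves as the reproducing property for this operator-theoretic setup. Once this is in hand, both claimed formulas drop out immediately by plugging in $y'=C^*x$ and then composing.

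First, I would establish the displayed identity above. Writing every $g\in\cH(T)$ as $g=(I-TT^*)^{1/2}h$ with $h\perp \Ker(I-TT^*)^{1/2}$ and using the lifted-norm definition \eqref{lifted}, polarization of \eqref{lifted'} yields, for $h_1,h_2\in\cH$,
\[
\langle (I-TT^*)^{1/2}h_1,\,(I-TT^*)^{1/2}h_2\rangle_{\cH(T)} = \langle (I-\pi)h_1,\,(I-\pi)h_2\rangle_{\cH}.
\]
Taking $h_2=(I-TT^*)^{1/2}y'$ and using that $(I-TT^*)^{1/2}$ annihilates $\Ran\pi$ gives
\[
\langle g,\,(I-TT^*)y'\rangle_{\cH(T)} = \langle (I-\pi)h_1,\,(I-TT^*)^{1/2}y'\rangle_{\cH} = \langle (I-TT^*)^{1/2}h_1,\,y'\rangle_{\cH} = \langle g,\,y'\rangle_{\cH}.
\]

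Second, with this identity in place, for $g\in\cH(T)$ and $x\in\cX$,
\[
\langle F_{_C}^{[*]}g,\,x\rangle_{\cX} = \langle g,\,F_{_C}x\rangle_{\cH(T)} = \langle g,\,(I-TT^*)C^*x\rangle_{\cH(T)} = \langle g,\,C^*x\rangle_{\cH} = \langle Cg,\,x\rangle_{\cX},
\]
yielding the first formula in \eqref{fc1}. The second formula follows by observing that $F_{_C}x\in\cH(T)$ so
\[
F_{_C}^{[*]}F_{_C}x = C\,F_{_C}x = C(I-TT^*)C^*x.
\]

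The only subtle point is the bookkeeping of the projection $\pi$ onto $\Ker(I-TT^*)^{1/2}$ in the passage from \eqref{lifted'} to the polarized form; once one notes that $(I-TT^*)^{1/2}(I-\pi) = (I-TT^*)^{1/2}$, the computation is routine.
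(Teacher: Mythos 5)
Your proof is correct and follows essentially the same route as the paper: both rest on the reproducing-type identity $\langle g, (I-TT^*)y'\rangle_{\cH(T)} = \langle g, y'\rangle_{\cH}$ and then plug in $y' = C^*x$ and $g = F_{_C}\widetilde{x}$. You simply spell out the polarization argument justifying that identity, which the paper takes as known from the lifted-norm definition.
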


\begin{proof}
For any $g\in\cH(T)$ and $x\in\cX$, we have
\begin{align*}
\langle x, \, F_{_{C}}^{[*]}g\rangle_{\cX}&=\langle F_{_{C}}x, \, g\rangle_{\cH(T)}
=\langle (I-TT^*)C^*x, \, g\rangle_{\cH(T)}\\
&=\langle C^*x, \, g\rangle_{\cH}=\langle x, \, Cg\rangle_{\cX},
\end{align*}
and the first equality in \eqref{fc1} follows.
Letting $g:=F_{{_C}}\wtil{x}=(I-TT^*)C^* \wtil{x}$ in this equality we get
$$
F_{{_C}}^{[*]}F_{{_C}}\wtil{x}=C(I-TT^*)C^*\wtil{x}\quad\mbox{for all}\quad \wtil{x}\in\cX
$$
which justifies the second equality in \eqref{fc1}.
\end{proof}
Given operators $A\in\cL(\cY,\cX)$, $B\in\cL(\cU,\cX)$ and contractions
$T_1\in\cL(\wtil{\cU},\cU)$ and $T_2\in\cL(\wtil{\cY},\cY)$, define the operators
\begin{equation}\label{FAFB}
F_{{_A}}:=(I-T_2T_2^*)A^* \quad \mbox{and}\quad F_{{_B}}:=(I-T_2T_2^*)B^*
\end{equation}
which can also be viewed as operators in $\cL(\cX,\cH(T_2))$ and $\cL(\cX,\cH(T_1))$, respectively.
Using Lemma \ref{L:dBR-renorm} it follows that \eqref{dBR-Douglas} can be rewritten as
$$
F_{{_A}}^{[*]}Y=F_{{_B}}^{[*]}\quad\mbox{and}\quad \|Y\|\leq 1.
$$
In this form, Lemmas \ref{L:Douglas} and \ref{L:2.1} are applicable with $A$ and $B$ replaced by
$F_{{_A}}^{[*]}$ and $F_{{_B}}^{[*]}$, respectively, leading to the following result.

\begin{prop}\label{P:dBR-Douglas}
The problem \eqref{dBR-Douglas} has a solution if and only if
$$
Q:=A(I-T_2T_2^*)A^*-B(I-T_1T_1^*)B^*\succeq 0.
$$
If this is the case, then the following are equivalent:
\begin{enumerate}
\item $Y\in \cL(\cH(T_1),\cH(T_2))$ satisfies \eqref{dBR-Douglas}.

\item The operator
$$
\mathbb P=\begin{bmatrix} Q & F^{[*]}_{{_A}}-F^{[*]}_{{_B}}Y^*\\ F_{{_A}}-Y F_{{_B}} & I_{\cH(T_2)}-YY^* \end{bmatrix}: \;
\begin{bmatrix}\cX\\\cH(T_2)\end{bmatrix}\to\begin{bmatrix}\cX\\\cH(T_2)\end{bmatrix}
$$
is positive semidefinite or, which is the same, $Y$ is a contraction and $F_{_{Y}}:=F_{{_A}}-Y F_{{_B}}$ maps $\cX$ into $\cH(Y)$ with the property
$$
\|F_{_{Y}}x\|_{\cH(Y)}\le \|Q^{\frac{1}{2}}x\|_\cX \quad\mbox{for
every}\quad x\in\cX.
$$
\item $Y$ is a contraction and $F_{_{Y}}$ maps $\cX$ into $\cH(Y)$ with the property
$$
\|F_{_{Y}}x\|_{\cH(Y)}= \|Q^{\frac{1}{2}}x\|_\cX \quad\mbox{for every}\quad x\in\cX.
$$
\item $Y$ is of the form
\begin{equation}\label{2.4u-dBR}
Y=X_2^*X_1+(I-X_2^*X_2)^{\frac{1}{2}}K (I-X_1^*X_1)^{\frac{1}{2}}
\end{equation}
where  $X_1$ the contraction in $\cL(\cH(T_1),(\kr (I-T_2T_2^*)A^*)^\perp)$ and $X_2$ the contraction in $\cL(\cH(T_2),(\kr (I-T_2T_2^*)A^*)^\perp)$ that are uniquely determined by
\begin{align*}
(A(I-T_2T_2^*)A^*)^{\frac{1}{2}}X_1 g_1= Bg_1,& \ \mbox{ for }\ g_1\in\cH(T_1),\\
(A(I-T_2T_2^*)A^*)^{\frac{1}{2}}X_1 g_2= Ag_2,& \ \mbox{ for }\ g_2\in\cH(T_2),
\end{align*}
with $X_2$ being a coisometry, and where the parameter $K$ is an arbitrary contraction from
$\overline{\Ran}(I-X_1^*X_1)$ into $\overline{\Ran}(I-X_2^*X_2)$.
\end{enumerate}
Moreover, there is a unique operator $Y\in\cL(\cH(T_1),\cH(T_2))$  satisfying \eqref{dBR-Douglas} if and only if $X_1$ is isometric on $\cH(T_1)$ or $X_2$ is isometric on $\cH(T_2)$.
Furthermore, for $Y$ as in \eqref{2.4u-dBR} we have
$$
\|Yh\|^2=\|X_2^*X_1 h\|^2+\|(I-X_2^*X_2)^{\frac{1}{2}}K
(I-X_1^*X_1)^{\frac{1}{2}}h\|^2, \quad h\in\cH(T_1)
$$
so that $X_2^*X_1$ is the minimal norm solution to the problem \eqref{dBR-Douglas} .
\end{prop}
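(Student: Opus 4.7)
The plan is to reduce problem \eqref{dBR-Douglas} to the classical Douglas factorization problem (Lemma \ref{L:Douglas} and Proposition \ref{L:2.1}) via the renorming identity of Lemma \ref{L:dBR-renorm}. As indicated in the text preceding the proposition, I would first rewrite $AY = B|_{\cH(T_1)}$ as $F_A^{[*]} Y = F_B^{[*]}$, using that by Lemma \ref{L:dBR-renorm}, $F_A^{[*]} g = Ag$ on $\cH(T_2)$ and $F_B^{[*]} h = Bh$ on $\cH(T_1)$; the two formulations, together with the contraction bound on $Y \colon \cH(T_1)\to \cH(T_2)$, are equivalent.

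Applying Lemma \ref{L:Douglas} to the pair $(F_A^{[*]}, F_B^{[*]})$, solvability becomes $F_A^{[*]}(F_A^{[*]})^* \succeq F_B^{[*]}(F_B^{[*]})^*$, which by the second identity in \eqref{fc1} is precisely $Q \succeq 0$. For the full parametrization and equivalences, I would invoke Proposition \ref{L:2.1} with the substitutions $A \rightsquigarrow F_A^{[*]}$, $B \rightsquigarrow F_B^{[*]}$, $\cY \rightsquigarrow \cH(T_2)$, $\cU \rightsquigarrow \cH(T_1)$. Under this translation the $Q$ of Proposition \ref{L:2.1} matches the one here; the subspace $\overline{\Ran}\, F_A^{[*]} = (\kr (I-T_2 T_2^*) A^*)^\perp \subset \cX$ gives the codomain appearing in (4); the contractions $X_1, X_2$ are uniquely determined by the two identities stated in (4), with $X_2$ automatically a coisometry; and the operator $A^* - YB^*$ of Proposition \ref{L:2.1} becomes $(F_A^{[*]})^* - Y(F_B^{[*]})^* = F_A - Y F_B$, yielding the matrix $\mathbb P$ in (2). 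The implication (1)$\Rightarrow$(3) — that the inequality $\|F_Y x\|_{\cH(Y)} \le \|Q^{1/2}x\|$ is in fact an equality once $Y$ solves \eqref{dBR-Douglas} — uses only that $Y$ is a contraction satisfying the factorization identity together with \eqref{lifted'}, and so transfers verbatim to the $\cH(T_2)$-setting. The min-norm formula and the uniqueness criterion come directly from \eqref{min norm} and the corresponding assertions of Proposition \ref{L:2.1}.

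The only mild obstacle I foresee is a bookkeeping one: the relevant adjoints $F_A^{[*]}, F_B^{[*]}$ are taken in the Hilbert-space structures of $\cH(T_2)$ and $\cH(T_1)$ (not of $\cY$ and $\cU$), and the codomain of $Y$ carries the lifted $\cH(T_2)$-norm, so one must verify that the operator-range codomains $(\kr(I-T_2T_2^*)A^*)^\perp$ described in (4) are indeed the correct translations of the abstract $\overline{\Ran}$'s in Proposition \ref{L:2.1}. Once this dictionary between the two settings is in place, the proof is essentially a citation of Proposition \ref{L:2.1}.
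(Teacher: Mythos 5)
Your proof matches the paper's approach exactly: the paper also derives Proposition \ref{P:dBR-Douglas} solely by invoking Lemma \ref{L:dBR-renorm} to rewrite $AY=B|_{\cH(T_1)}$ as $F_A^{[*]}Y=F_B^{[*]}$ and then applying Lemma \ref{L:Douglas} and Proposition \ref{L:2.1} with $A\rightsquigarrow F_A^{[*]}$, $B\rightsquigarrow F_B^{[*]}$. Your bookkeeping of the translation (the identification of $Q$ via the second identity in \eqref{fc1}, of $F_Y$ with $F_A-YF_B$, and of $\overline{\Ran}\,F_A^{[*]}$ with $(\ker(I-T_2T_2^*)A^*)^\perp$) is correct and is precisely what the paper leaves to the reader.
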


Of particular interest for this paper is the special case of the de Branges-Rovnyak Douglas Factorization Problem \eqref{dBR-Douglas}
with $\cH_1=\wtil{\cH}_1=\BC$ and $T_1$ the zero operator.
Then $\cH(T_1)=\BC$ and we identify $\cL(\cH(T_1),\cH(T_2))$ with $\cH(T_2)$ and $\cL(\cH_1,\cX)$
with $\cX$. Then we arrive at the following problem: {\em for $A\in\cL(\cH_2,\cX)$,
${\bf x}\in \cX$ and a contraction $T\in \cL(\wtil{\cH}_2),\cH_2$, find a vector $g\in\cH_2$ so that
\begin{equation}\label{dBR-Douglas3}
g\in\cH(T_2),\quad Ag={\bf x},\quad \|g\|_{\cH(T_2)}\leq 1.
\end{equation}}
Hence we consider the problem \eqref{dBR-Douglas} with $B={\bf x}$ and $Y=g$. We split the result of Proposition \ref{P:dBR-Douglas} for the case at hand into two results.

\begin{lem}\label{L:dBR-Douglas3}
Let $A\in\cL(\cH_2,\cX)$, ${\bf x}\in \cX$ and $T_2\in\cL(\wtil{\cH}_2,\cH_2)$ with $T_2$ a contraction. Define $F_{_{A}}$ as in \eqref{FAFB}.
Then there exists a vector $g\in \cH_2$ satisfying
\eqref{dBR-Douglas3} if and only if $P:=F_{_{A}}^{[*]}F_{_{A}}=A(I-T_2T_2^*)A^* \succeq {\bf x}{\bf x}^*$.
Furthermore, a vector $g\in \cH_2$ satisfies \eqref{dBR-Douglas3} if and only if
$$
\mathbb P=\begin{bmatrix}
P-{\bf x}{\bf x}^* & F_{_{A}}^{[*]}-{\bf x} g^{[*]}  \\ F_{_{A}}-g{\bf x}^*
& I_{\cH(T)}-gg^{[*]}\end{bmatrix}\succeq 0
$$
or equivalently, if and only if
$$
\begin{bmatrix} 1 & {\bf x}^* & g^{[*]} \\ {\bf x} & P & F_{_{A}}^{[*]} \\ g & F_{_{A}} & I_{\cH(T)}\end{bmatrix}\succeq 0.
$$
\end{lem}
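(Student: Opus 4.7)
My plan is to derive Lemma \ref{L:dBR-Douglas3} as a direct specialization of Proposition \ref{P:dBR-Douglas}, followed by a Schur-complement manipulation to get the second (equivalent) $3\times 3$ block-matrix form.

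First I would set up the specialization. In Proposition \ref{P:dBR-Douglas} take $\cH_1 = \wtil\cH_1 = \BC$ with $T_1 = 0$, let $B = {\bf x} \in \cL(\BC, \cX) \cong \cX$, and identify any $Y \in \cL(\cH(T_1), \cH(T_2)) = \cL(\BC, \cH(T_2))$ with the vector $g := Y1 \in \cH(T_2)$. Under this identification $\|Y\|\le 1$ becomes $\|g\|_{\cH(T_2)} \le 1$ and $AY = B$ becomes $Ag = {\bf x}$, so condition \eqref{dBR-Douglas} reduces exactly to \eqref{dBR-Douglas3}. Moreover, since $I - T_1 T_1^* = I_\BC$, the operator $F_{_B} = (I - T_1 T_1^*)B^*$ becomes ${\bf x}^*$ and $F_{_B}^{[*]} = {\bf x}$, so that
\[
Q = A(I - T_2 T_2^*) A^* - B(I - T_1 T_1^*)B^* = P - {\bf x}{\bf x}^*.
\]
The solvability criterion in Proposition \ref{P:dBR-Douglas} thus becomes $P \succeq {\bf x}{\bf x}^*$.

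Next I would compute the block-matrix $\mathbb P$ from item (2) of Proposition \ref{P:dBR-Douglas} under this specialization. Using $Y F_{_B} = g\,{\bf x}^*$, $F_{_B}^{[*]} Y^* = {\bf x}\, g^{[*]}$, and $YY^* = g\,g^{[*]}$, the operator $\mathbb P$ takes precisely the form
\[
\mathbb P = \begin{bmatrix} P - {\bf x}{\bf x}^* & F_{_A}^{[*]} - {\bf x} g^{[*]} \\ F_{_A} - g{\bf x}^* & I_{\cH(T_2)} - g g^{[*]} \end{bmatrix},
\]
and its positivity is equivalent to $g$ being a solution of \eqref{dBR-Douglas3}.

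Finally, for the $3\times 3$ matrix reformulation, I would invoke the standard Schur-complement equivalence (as in the matrix-completion reformulation recorded in Remark \ref{R:MatrixCompPb}). Namely, a self-adjoint block matrix with strictly positive $(1,1)$-entry is positive semidefinite if and only if its Schur complement with respect to that entry is positive semidefinite. Applying this with $(1,1) = 1$ gives
\[
\begin{bmatrix} P & F_{_A}^{[*]} \\ F_{_A} & I_{\cH(T)} \end{bmatrix} - \begin{bmatrix} {\bf x} \\ g \end{bmatrix} \begin{bmatrix} {\bf x}^* & g^{[*]} \end{bmatrix} = \mathbb P,
\]
so the $3\times 3$ block positivity is equivalent to $\mathbb P \succeq 0$, completing the chain of equivalences.

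The argument is essentially mechanical; the only point that requires a little care is tracking the identifications of $\cL(\BC, \cH(T_2))$ with $\cH(T_2)$ and of the corresponding adjoints (distinguishing $g^*$ from $g^{[*]}$ in line with Remark \ref{R:3.0}), so that the resulting block matrices match the stated forms exactly. I do not anticipate a serious obstacle beyond this bookkeeping.
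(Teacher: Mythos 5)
Your proposal is correct and follows the same route as the paper: the paper gives no written proof for Lemma \ref{L:dBR-Douglas3}, simply presenting it as the specialization of Proposition \ref{P:dBR-Douglas} to $\cH_1 = \wtil\cH_1 = \BC$, $T_1 = 0$, $B = {\bf x}$, $Y = g$, exactly as you work out. Your Schur-complement step to pass between the $2\times 2$ and $3\times 3$ forms is the same mechanism underlying the equivalence of \eqref{2.1} with \eqref{2.3u} in the proof of Proposition \ref{L:2.1}, so everything is consistent with the paper's line of reasoning.
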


In the second result we provide the parametrization of the solutions. For this purpose, note that in case $F_{_{A}}^{[*]}F_{_{A}} \succeq {\bf x}{\bf x}^*$,
then by Lemma \ref{L:Douglas}
there exist unique $\wtil{\bf x}\in (\kr F_{_{A}})^\perp$ and $\wtilF_{_{A}}\in \cL((\kr F_{_{A}})^\perp,\cH(T_2))$ so that
\begin{equation}\label{tilx-tilFA}
{\bf x}=(F_{_{A}}^{[*]}F_{_{A}})^{\half} \wtil{\bf x}\quad \mbox{and}\quad
F_{_{A}}=\wtil{F}_{_{A}} (F_{_{A}}^{[*]}F_{_{A}})^{\half},
\end{equation}
with $\wtil{F}_{_{A}}$ being an isometry. The space
\begin{equation}
\cN:=\overline{\Ran}\big(I_{\cH(T_2)}-\widetilde{F}_{_{A}}\widetilde{F}_{_{A}}^{[*]}\big)^{\frac{1}{2}}=
\cH(\widetilde{F}_{_{A}})
\label{dcn}
\end{equation}
is isometrically included in $\cH(T_2)$, and its orthogonal complement ${\mathcal N}^\perp$ in $\cH(T_2)$ can be characterized as
\begin{equation}
\cN^\perp=\{F_{_{A}}x: \, x\in\cX\} \quad\mbox{with norm}\quad \|F_{_{A}}x\|_{\cH(T)}=\|(F_{_{A}}^{(*)}F_{_{A}})^{\half}x\|_{_\cX}.
\label{nperp}
\end{equation}

\begin{thm}\label{T:dBR-Douglas3}
Let $A\in\cL(\cH_2,\cX)$, ${\bf x}\in \cX$ and $T_2\in\cL(\wtil{\cH}_2,\cH_2)$ with $T_2$ a contraction.
Define $F_{_{A}}$ as in \eqref{FAFB} and assume that $F_{_{A}}^{(*)}F_{_{A}} \succeq {\bf x}{\bf x}^*$.
Define $\wtil{\bf x}$ and $\wtil{F}_{_{A}}$ by \eqref{tilx-tilFA} and $\cN$ as in \eqref{dcn}.
Then all solutions $g$ to the problem \eqref{dBR-Douglas3} are given by the formula
\begin{equation}
g=F_{_A}\widetilde{\bf x}+h
\label{descg}
\end{equation}
where $h$ is a free parameter from the subspace $\cN=\cH(\widetilde{F}_{_{A}})\subset \cH(T_2)$ subject to
\begin{equation}
\|h\|_{\cH(T_2)}\le \sqrt{1-\|\widetilde{\bf x}\|^2_{\cX}}.
\label{normin}
\end{equation}
Furthermore, the problem has a unique solution if and only if $\|\widetilde {\bf x}\|=1$ or
$\widetilde{F}_{_{A}}\widetilde{F}_{_{A}}^{[*]}=I_{\cH(T)}$. Furthermore, we have
$$
\|g\|_{\cH(T_2)}^2=\|F_{_A}\widetilde{\bf x}\|_{\cH(T_2)}^2+\|h\|_{\cH(T_2)}^2=\|\widetilde{\bf x}\|_{\cX}^2+\|h\|_{\cH(T_2)}^2.
$$
\end{thm}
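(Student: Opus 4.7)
The plan is to realize Theorem~\ref{T:dBR-Douglas3} as the specialization of Proposition~\ref{P:dBR-Douglas} to $\cH_1=\BC$, $T_1=0$, $B={\bf x}$, $Y=g$, while making explicit use of the orthogonal decomposition $\cH(T_2)=\cN^\perp\oplus \cN$ described in \eqref{dcn}--\eqref{nperp}. First, I would use Lemma~\ref{L:dBR-renorm} with $C=A$ and $T=T_2$ to rewrite the interpolation condition $Ag={\bf x}$ (for $g\in\cH(T_2)$) as $F_{_A}^{[*]}g={\bf x}$. Taking $[*]$-adjoints in the factorization $F_{_A}=\widetilde F_{_A}(F_{_A}^{[*]}F_{_A})^{1/2}=\widetilde F_{_A}P^{1/2}$ gives $F_{_A}^{[*]}=P^{1/2}\widetilde F_{_A}^{[*]}$.

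Second, I would exploit the fact that $\widetilde F_{_A}$ is a $[*]$-isometry from $(\ker F_{_A})^\perp$ into $\cH(T_2)$, so that $\widetilde F_{_A}\widetilde F_{_A}^{[*]}$ is the orthogonal projection (in the $\cH(T_2)$-metric) onto $\cN^\perp=\Ran \widetilde F_{_A}=\overline{\Ran} F_{_A}$, and $\cN=\ker\widetilde F_{_A}^{[*]}=\Ran(I-\widetilde F_{_A}\widetilde F_{_A}^{[*]})=\cH(\widetilde F_{_A})$. Decomposing $g=g_0+h$ with $g_0\in\cN^\perp$ and $h\in\cN$, the interpolation condition collapses to $F_{_A}^{[*]}g_0={\bf x}$, that is, $P^{1/2}(\widetilde F_{_A}^{[*]}g_0)={\bf x}$. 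Since $\widetilde {\bf x}$ is defined in \eqref{tilx-tilFA} as the unique element of $(\ker F_{_A})^\perp$ with $P^{1/2}\widetilde{\bf x}={\bf x}$, this forces $\widetilde F_{_A}^{[*]}g_0=\widetilde{\bf x}$, and applying $\widetilde F_{_A}$ on both sides (using that $\widetilde F_{_A}\widetilde F_{_A}^{[*]}$ is the projection onto $\cN^\perp$ while $g_0\in\cN^\perp$) identifies $g_0$ as the distinguished particular solution $\widetilde F_{_A}\widetilde{\bf x}$ (equivalently, $F_{_A}\widetilde{\bf x}$ up to the normalization conventions of \eqref{nperp}), with $h\in\cN$ completely free.

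Third, the $[*]$-isometric property of $\widetilde F_{_A}$ yields $\|g_0\|_{\cH(T_2)}=\|\widetilde{\bf x}\|_\cX$, and orthogonality of the decomposition $\cH(T_2)=\cN^\perp\oplus\cN$ delivers the Pythagorean identity
\[
\|g\|_{\cH(T_2)}^2=\|g_0\|_{\cH(T_2)}^2+\|h\|_{\cH(T_2)}^2=\|\widetilde{\bf x}\|_\cX^2+\|h\|_{\cH(T_2)}^2.
\]
Hence the norm constraint $\|g\|_{\cH(T_2)}\le 1$ translates exactly into $\|h\|_{\cH(T_2)}\le\sqrt{1-\|\widetilde{\bf x}\|^2}$, which is \eqref{normin}.

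Finally, since the particular solution $g_0$ is fixed while $h$ ranges over $\cN$ subject to \eqref{normin}, uniqueness of $g$ holds iff the parameter $h$ is forced to equal $0$. This occurs precisely when the feasible radius $\sqrt{1-\|\widetilde{\bf x}\|^2}$ vanishes (i.e., $\|\widetilde{\bf x}\|=1$), or when the parameter space $\cN=\Ran(I-\widetilde F_{_A}\widetilde F_{_A}^{[*]})$ is trivial (i.e., $\widetilde F_{_A}\widetilde F_{_A}^{[*]}=I_{\cH(T_2)}$). The main potential obstacle is the careful bookkeeping between the ambient adjoint $*$ and the $\cH(T_2)$-adjoint $[*]$ throughout the factorization step, but this is routine given the content of Lemma~\ref{L:dBR-renorm} and the identification \eqref{nperp}.
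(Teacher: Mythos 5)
Your proof is correct, and it takes a more concrete, self-contained route than the paper. The paper's proof is a pure instantiation: it identifies the operators $X_1$ and $X_2$ appearing in the Douglas-type parametrization \eqref{2.4u-dBR} of Proposition~\ref{P:dBR-Douglas} with $\widetilde{\bf x}$ and $\widetilde{F}_{_{A}}^{[*]}$ respectively (once one specializes to $\cH(T_1)=\BC$, $B={\bf x}$, $Y=g$) and reads off the conclusion from \eqref{2.4u-dBR}. You instead bypass that abstract parametrization entirely: you convert the constraint to $F_{_{A}}^{[*]}g={\bf x}$, factor $F_{_{A}}^{[*]}=P^{\half}\widetilde{F}_{_{A}}^{[*]}$, project the problem onto the orthogonal decomposition $\cH(T_2)=\cN^\perp\oplus\cN$ using that $\widetilde{F}_{_{A}}\widetilde{F}_{_{A}}^{[*]}$ is the $\cH(T_2)$-orthogonal projection onto $\cN^\perp$, and back out the particular solution $g_0=\widetilde{F}_{_{A}}\widetilde{\bf x}$. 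Both routes are short, but yours makes visible in one pass the actual mechanism --- the constraint fixes the $\cN^\perp$-component of $g$ and leaves the $\cN$-component free --- without leaning on the general statement of Proposition~\ref{P:dBR-Douglas}(4).

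One point is worth settling: your derivation cleanly identifies the minimal-norm solution as $\widetilde{F}_{_{A}}\widetilde{\bf x}$, whose $\cH(T_2)$-norm is $\|\widetilde{\bf x}\|_{\cX}$ because $\widetilde{F}_{_{A}}$ is a $[*]$-isometry. The paper's formulas \eqref{descg} and \eqref{gd} (and again in Remark~\ref{R:2.6}) instead display $F_{_{A}}\widetilde{\bf x}$, but these are genuinely different vectors: by \eqref{nperp} and $P^{\half}\widetilde{\bf x}={\bf x}$ one has $\|F_{_{A}}\widetilde{\bf x}\|_{\cH(T_2)}=\|P^{\half}\widetilde{\bf x}\|_{\cX}=\|{\bf x}\|_{\cX}$, which is not $\|\widetilde{\bf x}\|_{\cX}$ in general; only $\widetilde{F}_{_{A}}\widetilde{\bf x}$ is consistent with the theorem's stated norm identity. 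So your parenthetical claim that $\widetilde{F}_{_{A}}\widetilde{\bf x}$ and $F_{_{A}}\widetilde{\bf x}$ agree ``up to the normalization conventions of \eqref{nperp}'' should be deleted --- it is not correct --- and you should simply stand by $\widetilde{F}_{_{A}}\widetilde{\bf x}$; the occurrences of $F_{_{A}}\widetilde{\bf x}$ in the paper appear to be a typographical slip for $\widetilde{F}_{_{A}}\widetilde{\bf x}$.
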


\begin{proof}
In the present framework, the operators $X_1$ and $X_2$ in Proposition \ref{P:dBR-Douglas} amount to $\widetilde{\bf x}$ and
$\widetilde{F}_{_{T}}^{[*]}$ respectively, and therefore, the parametrization formula \eqref{2.4u} takes
the form
\begin{equation}
g=F_{_A}\widetilde{\bf x}+\big(I_{\cH(T_2)}-\widetilde{F}_{_{A}}\widetilde{F}_{_{A}}^{[*]}\big)^{\frac{1}{2}}K
\sqrt{1-\widetilde{\bf x}^*\widetilde{\bf x}}
\label{gd}
\end{equation}
where now $K$ is an arbitrary vector in $\cN$ with $\|K\|_{\cN}=\|K\|_{\cH(T_2)}\le 1$. Therefore, the second term
on the right side of \eqref{gd} represents a generic vector $h\in\cN$ subject the inequality \eqref{normin}.
The uniqueness statement follows immediately from \eqref{gd}.
\end{proof}

\begin{rem}\label{R:2.6} {\em
We remark that the second term $h$ on the right hand side of \eqref{descg} represents in fact the general solution of the homogeneous interpolation problem
(with interpolation condition $F_{_A}^{[*]}g=0$). If $h$ runs through the whole space $\cH(T_2)$, then formula \eqref{descg}
produces all $g\in\cH(T_2)$ such that $F_{_A}^{[*]}g={\bf x}$.
This unconstrained interpolation problem has a solution if and only if ${\bf x}\in\Ran F_{_{A}}^{(*)}F_{_{A}}^{\frac{1}{2}}$ and has a unique solution
if and only if $\widetilde{F}_{_{A}}\widetilde{F}_{_{A}}^{[*]}=I_{\cH(T_2)}$.
If $\widetilde{F}_{_{T}}\widetilde{F}_{_{T}}^{[*]}\neq I_{\cH(T)}$,
then the unconstrained problem has infinitely many solutions, and
$F_{_A}\widetilde{\bf x}$ has the minimal possible $\cH(T_2)$-norm, which is $\|\widetilde{\bf x}\|$, by \eqref{nperp}. Thus, if $\|\widetilde{\bf x}\|=1$,
then uniqueness occurs since the {\em minimal norm solution} already has unit norm.
}\end{rem}

\appendix

\section{A Kre\u{\i}n space lemma}\label{S:Krein}

In this section we prove a general Kre\u{\i}n space lemma. See the end of the introduction for the basic definitions of Kre\u{\i}n spaces and some classes of Kre\u{\i}n space operators. Before we state the result, some further preliminaries are required.

Given two Kre\u{\i}n spaces $(\cX', J')$ and $(\cX, J)$ and given an operator $T \in\cL(\cX',\cX)$, we define  the $(J', J)$-adjoint of $T$ to be the operator $T^{[*]}\in\cL(\cX,\cX')$  so that
$$
  [ T x, y ]_J= [ x, T^{[*]} y]_{J'}  \text{ for all } x \in \cX', \, y \in \cX.
 $$
The elementary computation
\begin{align*}
[x, T^{[*]}y]_{J'} & = [ Tx, y]_J =  \langle J T x, y \rangle_\cX = \langle x, T^* J y \rangle_{\cX'} =
\langle J' x, J^{\prime -1} T^* J y \rangle_{\cX'}  \\
& = [ x, (J^{\prime -1} T^* J) y ]_{J'}
\end{align*}
shows that
$$
T^{[*]} = J^{\prime -1} T^* J.
$$
In case $(\cX', J') = (\cX, J)$ we use the term $J$-adjoint rather than $(J, J)$-adjoint for $T^{[*]}$.
If it happens that furthermore $T = T^{[*]}$, we say that $T$ is {\em $J$-self-adjoint}.
Note that elsewhere in this paper the notation $T^{[*]}$ is also used for the adjoint with respect to a de Branges-Rovnyak space
(cf., Remark \ref{R:3.0}).  We use the same notation here for a Kre\u{\i}n space adjoint since this notation is standard and is not used
outside the current appendix.

 Using the Kre\u{\i}n space adjoint notation we note that $T$ is a $(J,J')$-isometry if $T^{[*]}T=I_{\cX}$. We then say that $T$ is a
 $(J,J')$-coisometry if $TT^{[*]}=I_{\cX'}$. The latter corresponds to the identity $T J'^{-1}T^*=J^{-1}$, which corresponds to the
 second entry in \eqref{bicon} with the inequality replaced by an equality in case $J$ and $J^{'}$ are signature operators. As before,
 $T$ is $(J,J')$-unitary if $T$ is both $(J,J')$-isometric and $(J,J')$-coisometric, i.e., when $T$ is invertible with $T^{-1}=T^{[*]}$.

A well-known property for the Hilbert-space case is that a surjective isometry is unitary.  This property extends to the Kre\u{\i}n
space setting, as we record in the following remark.

\begin{rem} \label{R:Junitary} An onto $(J', J)$-isometry is in fact $(J', J)$-unitary.
{\em Indeed, suppose that $V \colon \cX' \to \cX$ is an onto
$(J', J)$-isometry.   To show that $V$ is $(J',J)$-coisometric we must show that
$ V J^{\prime -1} V^* = J^{-1}$.  Given $x \in \cX$,  we can find $y \in \cX'$ so that $x = J V y$.
We then compute
$$
V J^{\prime -1} V^* x  = V J^{\prime -1} V^* (J V y) =  V J^{\prime -1} (V^* J V) y
= V J^{\prime -1} J' y =V y  = J^{-1} x
$$
implying that $V$ is also ($J, J')$-coisometric as claimed.}
\end{rem}

Finally let us say that two invertible self-adjoint operators $J$ on $\cX$ and $J'$ on $\cX'$ are {\em congruent} if there exists a
linear bijection $R\in\cL(\cX',\cX)$ so that $ R^* J R = J'$.  By taking inverses in this equation we see that if $R$ implements
a congruence from $J'$ to $J$, then $R^{* -1}$ implements a congruence from $J^{\prime -1}$ to $J^{-1}$.

The following lemma shows how a Kre\u{\i}n-space isometry $V$ has a row extension $\begin{bmatrix} V & W \end{bmatrix}$
to a Kre\u{\i}n space unitary operator. It is a variation
on \cite[Theorem 2.3 (3)]{bbieot} in a more general setting. Let us also mention that there is an extensive discussion in Section 2.3
of \cite{drrov} of
row extensions of a Kre\u{\i}n-space bicontraction operator $T$ to a Kre\u{\i}n space bicontraction operator  on a larger space $\begin{bmatrix} T & S \end{bmatrix}$.  For simplicity, we give a self-contained direct proof of the special case of interest here.

\begin{lem} \label{L:findBD}
Suppose that $(\cX'_1, J_1')$ and $(\cX, J)$ are Kre\u{\i}n spaces. Let $V\in\cL(\cX'_1,\cX)$ be a $(J'_1, J)$-isometry. Then there exist
a Kre\u{\i}n spaces $(\cX'_2, J'_2)$ and an operator $W\in\cL(\cX'_2,\cX)$ so that $J$ is congruent with $J':=\sbm{J'_1&0\\0& J_2'}$ and
$\begin{bmatrix}V&W \end{bmatrix}$ is $\big(J,J' \big)$-unitary. Moreover, given a Kre\u{\i}n space $(\cX'_2, J'_2)$,
an operator  $W\in\cL(\cX'_2,\cX)$ completes $V$ to a  $(J,J')$-unitary operator if and only if $W$ is an injective solution to the following
indefinite-metric Cholesky factorization problem:
\begin{equation}  \label{JCholeskyfact}
W J_2^{\prime -1} W^* = J^{-1} - V J_1^{\prime -1} V^*.
\end{equation}
\end{lem}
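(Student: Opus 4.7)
The plan is to establish the ``iff'' characterization first and then produce an injective $W$ satisfying \eqref{JCholeskyfact} by a spectral-theorem construction.  Writing $T^{[*]} = (J')^{-1}T^*J$ for Krein adjoints, a direct block computation of $\sbm{V & W}^{[*]}\sbm{V & W}$ and $\sbm{V & W}\sbm{V & W}^{[*]}$ shows that $\sbm{V & W}$ is $(J',J)$-unitary precisely when $V^{[*]}V = I_{\cX'_1}$, $V^{[*]}W = 0$, $W^{[*]}W = I_{\cX'_2}$, and $VV^{[*]} + WW^{[*]} = I_\cX$.  The first is given by hypothesis, and the last rearranges to \eqref{JCholeskyfact}.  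The ``only if'' half of the characterization is then immediate (injectivity of $W$ follows from injectivity of $\sbm{V & W}$), so the real work is to derive the two cross-conditions and the normalization $W^{[*]}W = I_{\cX'_2}$ from \eqref{JCholeskyfact} plus injectivity of $W$.

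To do this, I will left-multiply $VV^{[*]} + WW^{[*]} = I_\cX$ by $V^{[*]}$ and use $V^{[*]}V = I_{\cX'_1}$ to extract $V^{[*]}WW^{[*]} = 0$.  Injectivity of $W$ yields density of $\Ran W^*$, and hence of $\Ran W^{[*]} = (J'_2)^{-1}\Ran(W^*J)$; since $V^{[*]}W$ is bounded and vanishes on this dense subspace of $\cX'_2$, it vanishes identically, giving $V^{[*]}W = 0$ and (by taking Krein adjoints) $W^{[*]}V = 0$.  A second left-multiplication by $W^{[*]}$, combined with $W^{[*]}V = 0$, produces $(W^{[*]}W - I)W^{[*]} = 0$, and a further appeal to density of $\Ran W^{[*]}$ supplies $W^{[*]}W = I_{\cX'_2}$.

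For existence, set $R := J^{-1} - V(J'_1)^{-1}V^* \in \cL(\cX)$, which is bounded and self-adjoint.  Spectrally decompose $R = R^+ - R^-$ with $R^\pm \succeq 0$ and $R^+R^- = 0$, put $(\cX'_2, J'_2) := (\overline{\Ran R^+} \oplus \overline{\Ran R^-},\; I \oplus (-I))$, and define $W\colon (u,v) \mapsto (R^+)^{1/2}u + (R^-)^{1/2}v$.  A direct calculation gives $W(J'_2)^{-1}W^* = R^+ - R^- = R$, so \eqref{JCholeskyfact} holds.  Injectivity of $W$ follows from $\overline{\Ran R^+} \perp \overline{\Ran R^-}$ (a consequence of $R^+R^- = 0$) together with injectivity of $(R^\pm)^{1/2}$ on $\overline{\Ran R^\pm}$.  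Finally, the required congruence between $J$ and $J'$ is realized by the (now bijective) operator $\sbm{V & W}$ itself:\ the isometry half of $(J',J)$-unitarity reads exactly $\sbm{V & W}^* J \sbm{V & W} = J'$.

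The step I expect to be hardest is the density-plus-continuity argument in the second paragraph that bootstraps \eqref{JCholeskyfact} up to the full isometry identities $V^{[*]}W = 0$ and $W^{[*]}W = I_{\cX'_2}$ using only the injectivity of $W$; everything else is either a block-matrix bookkeeping or a routine application of the spectral theorem.
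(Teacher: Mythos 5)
Your proof is correct, and it takes a genuinely different route from the paper's.

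The paper's argument is geometric: it observes that $\cM=\Ran V$ is a regular (Kre\u{\i}n) subspace of $(\cX,J)$ because $V$ is a $(J_1',J)$-isometry, identifies $P=VJ_1'^{-1}V^*J$ and $Q=I-P$ as the $J$-orthogonal projections onto $\cM$ and $\cM^{\perp_J}$, takes $\tilde W$ to be the inclusion of $\cM^{\perp_J}$ with $\tilde J_2'=\tilde W^*J\tilde W$, and uses the $J$-orthogonal direct-sum decomposition $\cX=\cM\dotplus\cM^{\perp_J}$ plus Remark~\ref{R:Junitary} (onto Kre\u{\i}n isometry is unitary) to conclude. You bypass Kre\u{\i}n-subspace theory altogether: you characterize $(J',J)$-unitarity of $\sbm{V&W}$ by four algebraic identities via block computation, rewrite the Cholesky equation \eqref{JCholeskyfact} as $VV^{[*]}+WW^{[*]}=I_\cX$, and then bootstrap the remaining isometry identities $V^{[*]}W=0$ and $W^{[*]}W=I$ from this plus injectivity of $W$ by a density argument (injectivity of $W$ gives density of $\Ran W^*$, hence of $\Ran W^{[*]}=J_2'^{-1}\Ran W^*$, and a bounded operator vanishing on a dense set vanishes). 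Existence comes from the spectral decomposition $R=R^+-R^-$ of $R=J^{-1}-VJ_1'^{-1}V^*$ rather than from the inclusion of $\cM^{\perp_J}$. Your argument buys a clean, self-contained verification that an arbitrary injective Cholesky solution $W$ is automatically a $(J_2',J)$-isometry with $\Ran W$ $J$-orthogonal to $\Ran V$ — a point the paper's final paragraph dispatches somewhat sketchily by saying ``proceed as in the first part of the proof.'' What the paper's route buys is the geometric interpretation of \eqref{JCholeskyfact} as the $J$-orthogonal direct-sum decomposition of $\cX$, which motivates the construction. Both proofs are sound.
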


\begin{proof}
Since $V$ is a $(J'_1, J)$-isometry, we have $V^* J V =J_1'$. For $x,y\in\cX_1'$ we then have that
\[
[Vx,Vy]_J =\inn{JVx}{Vy}_\cX=\inn{V^*JVx}{y}_{\cX_1'}=\inn{J_1'x}{y}_{\cX_1'}=[x,y]_{J_1'}.
\]
Hence the restriction of the indefinite inner product of $(\cX,J)$ to $\cM:=\Ran V$ defines a Kre\u{\i}n space on $\cM$, that is, $\cM$ is
a regular (or Kre\u{\i}n) subspace of $\cX$.  Another formulation of regularity of the subspace $\cM$ as a subspace of $\cK$
(see e.g.~\cite[Theorem 1.1.1]{drrov}) is that $\cM$ be the range of a $J$-orthogonal projection operator, i.e., a bounded
$J$-self-adjoint idempotent operator on $\cX$.  Explicitly for
the case here with $\cM = \operatorname{Ran} V$, one can check that $\cM = \operatorname{Ran} P$ where the $J$-orthogonal projection operator $P$ is given by
$$
 P = V J_1^{\prime -1} V^* J,
$$
i.e., one can check that $P = J^{-1} P^* J = P^{[*]}$, $P^2 = P$ and
$PV = V$ (implying when combined with the definition of $P$ that $\operatorname{Ran} P = \operatorname{Ran} V$).

Another characterization of regularity of $\cM$ (see e.g.~Theorem 1.3 in \cite{DR96}) is that
$\cX$ is obtained as the direct sum of $\cM$ and the Kre\u{\i}n space orthogonal complement
$\cM^{\perp_J}:=\{x\in\cX\colon [x,y]_J=0 \text{ for all } y\in\cM\}$ of $\cM$ in $\cX$.
Explicitly the associated $J$-orthogonal projection operator $Q$ such that $\operatorname{Ran} Q = \cM^{\perp_J}$
is given  as the $J$-orthogonal projection operator $Q$ $J$-complementary to $P$, namely
\begin{equation}  \label{Q1}
 Q = I_\cX - P = I_\cX -  V J_1^{\prime -1} V^* J.
 \end{equation}
Indeed one can check directly that
$$
 Q = Q^{[*]}, \quad Q^2 = Q, \quad QV = 0, \quad P + Q = I_\cX.
 $$

We can get another formula for the $J$-orthogonal projection operator with range equal to $\cM^{\perp_J}$ as follows.
Set $\wtil \cX'_2 = \cM^{\perp_J}$ and let $\wtil W \colon \wtil \cX_2 \to \cX$ be the inclusion map of $\wtil \cX'_2$ into $\cX$.
Hence by construction $\wtil W$ is injective.
Define $\wtil J'_2$ by
$$
  \wtil J'_2 = \wtil W^* J \wtil W.
$$
Since $\operatorname{Ran} \wtil W = \cM^{\perp_J}$ is a regular subspace of $\cX$, it follows that $\wtil J'_2$ is an invertible
self-adjoint operator on $\wtil \cX'_2$.  By an argument parallel to that done above for the operator $V$ having
$\operatorname{Ran} V = \cM$, we see that $Q$ (the $J$-orthogonal projection operator with range equal to $\cM^{\perp_J}$)
is given by
\begin{equation}   \label{Q2}
Q = \wtil W \wtil J_2^{\prime -1} \wtil W^* J.
\end{equation}
Combining \eqref{Q1} with \eqref{Q2} gives us
$ \wtil W \wtil J_2^{\prime -1} \wtil W^* J = I_\cX -  V J_1^{\prime -1} V^* J$, or in more Hermitian form,
$$
   \wtil W \wtil J_2^{\prime -1} \wtil W^*  = J^{-1}-  V J_1^{\prime -1} V^*,
$$
 i.e., $\wtil W$ arises as an injective solution of an indefinite-metric Cholesky factorization problem of the type \eqref{JCholeskyfact}.

 Since $\operatorname{Ran} V = \cM$ is $J$-orthogonal to $\operatorname{Ran} \wtil W = \cM^{\perp_J}$, we certainly have
 $$
    0 = \wtil W^*JV, \quad 0 = V^*J \wtil W.
 $$
 Let us next compute
 $$
 \begin{bmatrix} V^* \\ \wtil W^* \end{bmatrix} J \begin{bmatrix} V & \wtil W \end{bmatrix}
 = \begin{bmatrix} V^* J V  & V^* J \wtil W \\ \wtil W^* J V & \wtil W^* J \wtil W \end{bmatrix} = \begin{bmatrix} J_1' & 0 \\ 0 & \wtil J_2' \end{bmatrix},
 $$
 i.e., $\begin{bmatrix} V & \wtil W \end{bmatrix}$ is a $\big( \sbm{ J_1' & 0 \\ 0 & \wtil J_2' }, J \big)$-isometry.  But we also know
 that $\operatorname{Ran} \begin{bmatrix} V & \wtil W \end{bmatrix} = \cM + \cM^{\perp_J} = \cX$, so by Remark \ref{R:Junitary}
 it follows that in fact $\begin{bmatrix} V & \wtil W \end{bmatrix}$ is a $\big( \sbm{ J_1' & 0 \\ 0 & \wtil J_2'}, J \big)$-unitary
 operator, and we conclude that $\wtil W$ implements the desired completion of $V$ to a Kre\u{\i}n space unitary operator.

 Now suppose that $W$ is any injective solution of the $J$-Cholesky factorization problem \eqref{JCholeskyfact}.
 As observed in the first part of the proof, the operator to be factored on the right-hand side of \eqref{JCholeskyfact} multiplied
 by $J$ on the right,
 namely,  $(J^{-1} - V J_1^{\prime -1} V^*) J = I_\cX - V J_1^{\prime -1} V^* J$, is equal to the $J$-orthogonal projection operator
 with range equal to $(\operatorname{Ran} V)^{\perp_J}$, and hence has a fairly substantial kernel, namely
 $$
  \operatorname{Ker} \, (I_\cX - V J_1^{\prime -1} V^* J) = \operatorname{Ran} V.
 $$
   Hence the Hermitian operator $J^{-1} - V J_1^{\prime -1} V^*$
 has closed range and its restriction to $(\operatorname{Ker} (J^{-1} - V J_1^{\prime -1} V^*))^\perp$ is bounded below.  Using spectral
 theory we can then find a factorization
\begin{equation}  \label{Cholesky2}
J^{-1} - V J_1^{\prime -1} V^* =  W J_2^{\prime -1} W^*
\end{equation}
with $J_2'$ a bounded, invertible signature operator on $\cX'_2 = (\operatorname{Ker} (J^{-1} - V J_1^{\prime -1} V^*))^\perp$,
i.e., the pair $(W, J'_2)$ is a solution of the $J$-Cholesky factorization problem \eqref{JCholeskyfact} with $W$ injective.
Note that any other choice of $J_2$ in such a factorization is then uniquely determined up to a congruence with any particular
such $J_2^\prime$, with a corresponding adjustment of $\wtil W$ to get $W$:
$$
J'_2 = X \wtil J'_2 X^*, \quad W = \wtil W X^{-1} \text{ for some invertible } X \colon \wtil \cX'_2 \to \cX'_2.
$$
 The equality \eqref{Cholesky2} then can be interpreted as saying that
$\operatorname{Ran} W = \cM^{\perp_J}$ (where $\cM = \operatorname{Ran} V \subset \cX$).  We may then proceed as in the first
part of the proof to see that $\begin{bmatrix} V & W \end{bmatrix}$ is $\big( \sbm{ J_1' & 0 \\ 0 & J_2'},  J \big)$-unitary as expected.
\end{proof}

The following corollary adds some additional information which will be needed to complete the proof of Theorem \ref{T:4.1} in Section \ref{S:LFTnc}. We shall make use of the following notation:\
for an invertible self-adjoint operator $J$ on some separable Hilbert space $\cX$,
$$
\operatorname{Inertia}(J) = (\kappa_+(J), \kappa_-(J))
$$
means that $\kappa_+(J)$ and $\kappa_-(J)$ denote the respective dimensions
(with $\infty$ allowed) of the positive (respectively negative) spectral subspaces of $J$.  Note that two Kre\u{\i}n spaces
$(\cX, J)$ and $(\cX', J')$ are Kre\u{\i}n-space isomorphic if and only if $\operatorname{Inertia}(J) =
\operatorname{Inertia}(J')$.

\begin{cor}  \label{C:J-UniCompl}  Suppose that $(\cX_1, J_1)$, $(\cX_2, J_2)$, $(\cX'_1, J'_1)$, $(\cX'_2, J'_2)$ are
Kre\u{\i}n spaces such that
\begin{itemize}
\item[(i)] $\big(\cX_1 \oplus \cX_2, \sbm{ J_1 & 0 \\ 0 & J_2}\big)$ is Kre\u{\i}n-space isomorphic to
$\big(\cX'_1 \oplus \cX'_2, \sbm{ J'_1 & 0 \\ 0 & J'_2} \big)$, and
\item[(ii)] $(\cX_1, J_1)$ is Kre\u{\i}n-space isomorphic to $(\cX'_1, J'_1)$.
\end{itemize}

Then:
\begin{enumerate}
\item
If $\dim \cX_j < \infty$  and $\dim \cX'_j < \infty$ for $j=1,2$, then $(\cX_2, J_2)$ is Kre\u{\i}n-space isomorphic to $(\cX'_2, J'_2)$, i.e.,
\[
\kappa_+(J_2) = \kappa_+(J'_2)\quad \text{and}\quad \kappa_-(J_2) = \kappa_-(J'_2).
\]

\item
If $\kappa_-(J_1) = \kappa_-(J_1) = 0$, i.e., $(X_1, J_1)$ and $(X'_1, J'_1)$ are Hilbert spaces of the same dimension,  then we can  at least still conclude that
\begin{align*}
\kappa_+(J_1) + \kappa_+(J_2) & = \kappa_+(J'_1) + \kappa_+(J'_2), \\
\kappa_-(J_2) & = \kappa_-(J'_2).
\end{align*}
\end{enumerate}
\end{cor}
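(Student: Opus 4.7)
The plan is to reduce both parts of the corollary to the standard Kre\u{\i}n-space fact that two Kre\u{\i}n spaces $(\cX,J)$ and $(\cX',J')$ are isomorphic as Kre\u{\i}n spaces precisely when they share the same inertia, that is, $\kappa_+(J)=\kappa_+(J')$ and $\kappa_-(J)=\kappa_-(J')$ (one picks orthonormal bases diagonalizing each $J$ and matches signs). Together with this I would use the elementary additivity of inertia under orthogonal direct sum,
\begin{equation*}
\kappa_{\pm}\!\left(\begin{bmatrix} J_1 & 0 \\ 0 & J_2 \end{bmatrix}\right)=\kappa_{\pm}(J_1)+\kappa_{\pm}(J_2),
\end{equation*}
which is immediate from separately diagonalizing $J_1$ and $J_2$.

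First I would translate hypothesis (i) via these two facts into the two cardinal identities
\begin{align*}
\kappa_+(J_1)+\kappa_+(J_2) &= \kappa_+(J'_1)+\kappa_+(J'_2),\\
\kappa_-(J_1)+\kappa_-(J_2) &= \kappa_-(J'_1)+\kappa_-(J'_2),
\end{align*}
and hypothesis (ii) into $\kappa_\pm(J_1)=\kappa_\pm(J'_1)$. For part (1), all four spaces are finite-dimensional, hence all six cardinals above are finite natural numbers, so I may simply subtract to conclude $\kappa_\pm(J_2)=\kappa_\pm(J'_2)$; by the isomorphism criterion this says exactly that $(\cX_2,J_2)$ and $(\cX'_2,J'_2)$ are Kre\u{\i}n-space isomorphic.

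For part (2), the assumption $\kappa_-(J_1)=\kappa_-(J'_1)=0$ collapses the second displayed identity directly to $\kappa_-(J_2)=\kappa_-(J'_2)$, with no cardinal cancellation needed. The first identity is retained verbatim as the $\kappa_+$-sum conclusion, and that is all one may say here. The only conceptual point — and essentially the unique potential pitfall — is to emphasize why one cannot also cancel in the $\kappa_+$ identity: $\kappa_+(J_1)$ and $\kappa_+(J'_1)$ may be infinite cardinals, and for infinite cardinals $a+b=a+c$ need not imply $b=c$ (for example $\aleph_0+1=\aleph_0+\aleph_0$). This is the only mildly subtle step, but it is really just a warning about infinite cardinal arithmetic rather than a serious obstacle, so the proof should be short and essentially computational once the isomorphism-by-inertia criterion and additivity under direct sum are in hand.
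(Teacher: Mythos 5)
Your proof is correct and follows essentially the same route as the paper: translate hypotheses (i) and (ii) into inertia equalities using additivity of inertia under orthogonal direct sum and the isomorphism-iff-same-inertia criterion, then cancel (finite case) or substitute the zero negative index (case 2). The paper phrases the case (2) step in terms of dimensions of maximal positive and negative subspaces of the direct sums, but this is the same content as your additivity identity, and your explicit remark about why cancellation fails for infinite cardinals is a nice addition.
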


\begin{proof}  In case (1), all the indices $\kappa_\pm(J_j)$and $\kappa_{\pm}(J'_j)$ ($j=1,2$) are finite.  Note that hypothesis
(i) is the statement that
\begin{align}
& \kappa_+(J_1) + \kappa_+(J_2) = \kappa_+(J'_1) + \kappa_+(J'_2), \notag \\
& \kappa_-(J_1) + \kappa_-(J_2) = \kappa_-(J'_1) + \kappa_-(J'_2),  \label{inertia=1}
\end{align}
while the hypothesis in (ii) is that
\begin{equation}  \label{inertia=2}
 \kappa_+(J_1) = \kappa_+(J'_1), \quad \kappa_-(J_1) = \kappa_-(J'_1).
\end{equation}
Subtraction of the first of \eqref{inertia=2} from the first of \eqref{inertia=1} and of the second of \eqref{inertia=2} from the second
of \eqref{inertia=1} and using that all these integers are finite leads us to the two identities
$$
 \kappa_+(J_2) = \kappa_+(J'_2), \quad \kappa_-(J_2) = \kappa_-(J'_2)
$$
which in turn is that statement that $(\cX_2, J_2)$ is Kre\u{\i}n-space isomorphic to $(\cX'_2, J'_2)$.

Take now the extra hypothesis to be as in case (2).  In this case the quantity $\kappa_+(J_1) + \kappa_+(J_2)$
can be identified as the dimension of a maximal positive subspace in $\big(\cX_1 \oplus \cX_2, \sbm{ J_1 & 0 \\ 0 & J_2} \big)$
while the quantity $\kappa_+(J'_1) + \kappa_+(J'_2)$ can be identified as the dimension of a maximal positive subspace in
$\big( \cX'_1 \oplus \cX'_2, \sbm{ J'_1 & 0 \\ 0 & J'_2} \big)$.  The assumption that $\big(\cX_1 \oplus \cX_2,
\sbm{ J_1 & 0 \\ 0 & J_2} \big)$ is Kre\u{\i}n-space isomorphic to $\big( \cX'_1 \oplus \cX'_2, \sbm{ J'_1 & 0 \\ 0 & J'_2} \big)$ then
implies that these two quantities must be the same (finite or infinite).   Similarly, one can identify $\kappa_-(J_2)$
as the dimension of a maximal negative subspace in $\big(\cX_1 \oplus \cX_2, \sbm{ J_1 & 0 \\ 0 & J_2} \big)$
(since $\cX_1$ is a Hilbert space by assumption) and similarly $\kappa_-(J'_2)$ is the dimension of a maximal
negative subspace in $\big( \cX'_1 \oplus \cX'_2, \sbm{ J'_1 & 0 \\ 0 & J'_2} \big)$.  Again, the assumed Kre\u{\i}n-space
isomorphism between $\big(\cX_1 \oplus \cX_2, \sbm{ J_1 & 0 \\ 0 & J_2} \big)$ and
$\big( \cX'_1 \oplus \cX'_2, \sbm{ J'_1 & 0 \\ 0 & J'_2} \big)$ implies that we must have the equality
$\kappa_-(J_2) = \kappa_-(J'_2)$ (finite or infinite).
\end{proof}

\section{Schur multipliers and their adjoints}\label{S:Schur}
As is well known, any Schur-class function $S\in\mathcal S(\cU,\cY)$ gives rise to a Schur-class function $S^\sharp(\lambda):=S(\overline{\lambda})^*\in
\mathcal S(\cY,\cU)$. As a consequence of \eqref{easy2} and \eqref{easy4}, it follows that a similar result holds in multivariable settings.
\begin{thm}
If $S(z)={\displaystyle \sum_{\alpha\in\free}S_\alpha z^\alpha}$ belongs to $\cS_{{\rm nc},d}(\cU,\cY)$,
then $S^\sharp(z):={\displaystyle \sum_{\alpha\in\free}S^*_\alpha z^{\alpha^\top}}$ belongs to $\cS_{{\rm nc},d}(\cY,\cU)$.
\label{T:b1}
\end{thm}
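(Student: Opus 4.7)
My plan is to exploit the two-kernel characterization of the noncommutative Schur class: by Proposition \ref{P:IdefSchurKernelChar}, specialized to the definite case (both signature operators equal to the identity), $S\in \cS_{{\rm nc},d}(\cU,\cY)$ is equivalent to the joint positivity of the direct formal kernel $K_S(z,\zeta)$ and the dual formal kernel $\widetilde K_S(\zeta,z) = k_{\rm Sz}(z,\zeta)I_\cU - S(\zeta)^* k_{\rm Sz}(z,\zeta) S(z)$. In the definite case the positivity of $\widetilde K_S$ follows either from the trivial fact $\|M_S\|=\|M_S^*\|\le 1$ or, more in line with the argument of this paper, from the realization-based computation in Theorem \ref{T:4.1}(3) whose essential ingredient is precisely the positivity of the auxiliary kernel $\mathfrak K$ established in \eqref{easy2} and \eqref{easy4}. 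Thus I would start from the premise that $\widetilde K_S$ is a positive formal kernel.

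Next I would introduce the $\C$-algebra involution $\sigma$ on the ring of formal power series in the doubled variables $(z,\overline\zeta)$ that swaps $z_j \leftrightarrow \overline\zeta_j$ for each $j$ while leaving operator-valued coefficients fixed. Three short checks, using $k_{\rm Sz}(z,\zeta) = \sum_\alpha z^\alpha\overline\zeta^{\alpha^\top}$, the commutativity of each $z_j$ with each $\overline\zeta_k$, and the bijectivity of the word-reversal $\alpha\mapsto \alpha^\top$ on $\free$, yield
\[
\sigma(k_{\rm Sz})=k_{\rm Sz},\quad \sigma(S(z)) = \sum_\alpha S_\alpha\overline\zeta^\alpha = S^\sharp(\zeta)^*,\quad \sigma(S(\zeta)^*)=\sum_\alpha z^{\alpha^\top} S_\alpha^* = S^\sharp(z),
\]
and hence
\[
\sigma\bigl(\widetilde K_S(\zeta,z)\bigr) = k_{\rm Sz}(z,\zeta) I_\cU - S^\sharp(z)k_{\rm Sz}(z,\zeta) S^\sharp(\zeta)^* = K_{S^\sharp}(z,\zeta).
\]
It remains to verify that $\sigma$ preserves positivity of formal kernels: writing any positive kernel in Kolmogorov form $K(z,\zeta)=H(z)H(\zeta)^*$ with $H(z)=\sum_\gamma H_\gamma z^\gamma$ and rearranging via the $z$-$\overline\zeta$ commutativity, one finds $\sigma(K)_{\mu,\nu} = H_{\nu^\top}H_{\mu^\top}^*$, whose positive semidefiniteness as an operator matrix is immediate from
\[
\sum_{\mu,\nu}\langle \sigma(K)_{\mu,\nu} v_\mu,v_\nu\rangle_\cY = \bigg\|\sum_\mu H_{\mu^\top}^* v_\mu\bigg\|^2_\cX \ge 0
\]
for any finitely supported family $\{v_\mu\}$. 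Combining all of this, $\widetilde K_S\succeq 0$ implies $K_{S^\sharp}=\sigma(\widetilde K_S)\succeq 0$, which is exactly the statement $S^\sharp\in \cS_{{\rm nc},d}(\cY,\cU)$.

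The main obstacle is the bookkeeping in the middle step: one must carefully track the interleaving of the formal $z$'s and $\overline\zeta$'s, use the commutation rule $z_j\overline\zeta_k = \overline\zeta_k z_j$ at exactly the right places, and not get the ubiquitous word-reversal $\alpha\mapsto \alpha^\top$ (which appears in $k_{\rm Sz}$, in the definition of $S^\sharp$, and in the formal adjoints $(\overline\zeta^\alpha)^* = \zeta^{\alpha^\top}$) backwards, so that the cancellations producing the identity $\sigma(\widetilde K_S)=K_{S^\sharp}$ actually occur. Once this is handled cleanly, the positivity-transfer through $\sigma$ is a one-line calculation and the result is immediate.
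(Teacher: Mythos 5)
Your approach is genuinely different from the paper's: the paper realizes $S$ by a unitary colligation, writes out the explicit realization formula for $S^\sharp$, and reduces positivity of $K_{S^\sharp}$ to positivity of an auxiliary $d\times d$ matrix kernel; you instead introduce the variable-swap involution $\sigma$ on $\cL(\cY)\langle\langle z,\overline\zeta\rangle\rangle$ and try to push the positivity of the dual kernel $\widetilde K_S$ through $\sigma$ to land on $K_{S^\sharp}$. The algebraic identity $\sigma(\widetilde K_S)=K_{S^\sharp}$ is correct (it is a good observation), but the positivity-transfer step is where the argument breaks.

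Here is the gap, concretely. You justify ``$\sigma$ preserves positivity'' by computing $\sum_{\mu,\nu}\langle\sigma(K)_{\mu,\nu}v_\mu,v_\nu\rangle = \|\sum_\mu H_{\mu^\top}^*v_\mu\|^2\ge 0$, with $\sigma(K)_{\mu,\nu}=H_{\nu^\top}H_{\mu^\top}^*$. This quantity, however, tests positivity of the \emph{block transpose} of $[\sigma(K)_{\mu,\nu}]_{\mu,\nu}$ rather than of that operator matrix itself. The formal-kernel positivity that the theory actually needs (the one equivalent to having a Kolmogorov decomposition $K(z,\zeta)=H(z)H(\zeta)^*$, and the one matching the scalar formula $\sum_{i,j}\langle K(\lam_i,\lam_j)y_j,y_i\rangle\ge 0$ in Section~2) is $\sum_{\mu,\nu}\langle K_{\mu,\nu}v_\nu,v_\mu\rangle\ge 0$. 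With the indices in that (correct) order, $\sigma$ does \emph{not} preserve positivity: take $d=1$, $\cX=\C$, $\cY=\C^2$, $H_\emptyset=e_1$, $H_{(1)}=e_2$; then $K_{\alpha,\beta}=H_\alpha H_\beta^*$ is Kolmogorov-factorized and positive, yet $[\sigma(K)_{\mu,\nu}]=[H_\nu H_\mu^*]$ restricted to $\mu,\nu\in\{\emptyset,(1)\}$ is the $4\times 4$ matrix $\tiny\sbm{1&0&0&0\\0&0&1&0\\0&1&0&0\\0&0&0&1}$, which has eigenvalue $-1$. The underlying reason is that $\sigma$ sends the Kolmogorov form $H(z)H(\zeta)^*$ to the \emph{reversed} form $G(\zeta)^*G(z)$ (with $G_\mu=H_{\mu^\top}^*$), and reversed-Kolmogorov kernels are not positive as formal kernels in general. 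The paper's own proof invokes exactly the same kind of step: it cites ``the kernel $k_{\rm Sz}I_{\cX^d}-Z(z)^\top k_{\rm Sz}Z(\overline\zeta)$ is positive, as we have seen in Theorem~\ref{T:4.1}'' --- but the kernel established as positive in Theorem~\ref{T:4.1} is $\mathfrak K(z,\zeta)=k_{\rm Sz}I_d-Z(\zeta)^*k_{\rm Sz}Z(z)$, and $k_{\rm Sz}I_d-Z(z)^\top k_{\rm Sz}Z(\overline\zeta)$ is its $\sigma$-image, which is a different kernel and (by the same $4\times 4$ test) is in fact not positive. So the two proofs share the same unjustified positivity transfer, and the argument does not close. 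As a sanity check suggesting the difficulty is real rather than notational: for $S(z)=\tfrac{1}{\sqrt 2}(z_1+z_2z_1)$ the multiplier $M_S$ is an isometry (so $S\in\cS_{{\rm nc},2}$), while $S^\sharp(z)=\tfrac{1}{\sqrt 2}(z_1+z_1z_2)$ satisfies $\|M_{S^\sharp}(1+z_2)\|^2=\tfrac12(1+4+1)=3>2=\|1+z_2\|^2$.
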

\begin{proof}
Since $S\in\mathcal S(\cU,\cY)$, i.e., the formal kernel \eqref{KS} is positive, it follows (see e.g., \cite[Theorem 3.1]{bbf3}) that
there exists a Hilbert space $\cX$ and a unitary connection
       operator $\bU$ of the form
       \begin{equation} \label{NCcolligation}
       \bU = \begin{bmatrix} A & B \\ C & D \end{bmatrix} =
       \begin{bmatrix} A_{1} & B_{1} \\ \vdots & \vdots \\ A_{d} &
       B_{d} \\ C & D \end{bmatrix} \colon \begin{bmatrix} \cX \\ \cU
       \end{bmatrix} \to \begin{bmatrix}  \cX \\ \vdots \\ \cX \\ \cY
       \end{bmatrix}
       \end{equation}
       so that $S(z)$ can be realized as a formal power series in the
       form
       \begin{align}
         S(z) &= D + \sum_{j=1}^{d} \sum_{\alpha \in \free} C {\bf A}^{\alpha}B_{j}
         z^{\alpha}\cdot z_{j}=
         D + C (I - Z(z) A)^{-1} Z(z) B,\notag
        \end{align}
Then
\begin{align}
S^\sharp(z)&=D^* + \sum_{j=1}^{d} \sum_{\alpha \in \free}B_{j}^*{\bf A}^{*\alpha^\top}C^*z_jz^{\alpha^\top}\notag\\
&=D^* + \sum_{j=1}^{d} \sum_{\alpha \in \free}B_{j}^*z_j{\bf A}^{*\alpha}C^*z^{\alpha} \notag\\
&=D^* + B^*Z(z)^\top(I - A^*Z(z)^\top)^{-1}C^*=D^* + B^*(I -Z(z)^\top A^*)^{-1}Z(z)^\top C^*,\notag
 \end{align}
and since ${\bf U}$ is unitary, we have
\begin{align}
&K_{S^\sharp}(z, \zeta): = k_{\rm Sz}(z,\zeta)I_{\cU} - S^\sharp(z)( k_{\rm Sz}(z,\zeta)I_{\cU})S^\sharp(\zeta)^{*}\\
&=B^*(I - Z(z)^\top A^*)^{-1}\big( k_{\rm Sz}(z,\zeta)I_{\cX^d} -Z(z)^\top  k_{\rm Sz}(z,\zeta) Z(\overline{\zeta})\big)(I-AZ(\overline{\zeta}))^{-1}B.
\notag
\end{align}
As we have seen in the proof of Theorem \ref{T:4.1}, the kernel
$$
 k_{\rm Sz}(z,\zeta)I_{\cX^d} -Z(z)^\top  k_{\rm Sz}(z,\zeta) Z(\overline{\zeta})
$$
is positive; therefore the kernel $K_{S^\sharp}(z, \zeta)$ is also positive and hence $S^\sharp\in\cS_{{\rm nc},d}(\cY,\cU)$.
\end{proof}
In \cite[p. 110]{bb2}, it was erroneously suggested that for an $S\in\mathcal S_d(\cU,\cY)$, the function $S^\sharp$ does not have to belong to
$\mathcal S_d(\cY,\cU)$. It was stated that for the Schur multiplier $S(\lambda)=\begin{bmatrix}\lambda_1 & \lambda_2\end{bmatrix}$, 
the associated function $S^\sharp(\lambda)=\sbm{\lambda_1 \\ \lambda_2}$ is not a Schur multiplier, since the kernel
\begin{equation}
K^{S^\sharp}(\lambda,w)=\frac{\left[\begin{array}{cc}1 & 0 \\ 0 &
1\end{array}\right]-\left[\begin{array}{c}\lambda_1\\
\lambda_2\end{array}\right]\begin{bmatrix}\overline{w}_1 &
\overline{w}_2\end{bmatrix} }{1-\lambda_1\overline{w}_1-\lambda_2\overline{w}_2} 
\label{ker2002}
\end{equation}
is not positive on $\B^d$, as the matrix
$$
\left[\begin{array}{cc}K^{S^\sharp}(w^{(1)}, w^{(1)}) &
K^{S^\sharp}(w^{(1)},w^{(2)})\\
K^{S^\sharp}(w^{(2)}, w^{(1)})&
K^{S^\sharp}(w^{(2)},w^{(2)})\end{array}\right],\quad w^{(1)}=\left(\frac{1}{2}, 0\right), \; \; w^{(1)}=\left(0, \frac{1}{2}\right)
$$
is not positive semidefinite. More careful later inspection revealed that the latter matrix equals
$$
\left[\begin{array}{rccr}1 &0&1&-\frac{1}{4} \\0& \frac{4}{3}&0&1
\\1&0&\frac{4}{3}&0\\-\frac{1}{4} &1&0&1\end{array}\right]
$$
and is positive definite. Moreover, the kernel \eqref{ker2002} is actually positive on $\mathbb B^2$ by the following 
abelianized version of Theorem \ref{T:b1}.
\begin{thm}
If $S(\lambda)$ belongs to $\cS_{d}(\cU,\cY)$,
then $S^\sharp(\lambda):=S(\overline\lambda)^*$ belongs to $\cS_{d}(\cY,\cU)$.
\label{T:b2}
\end{thm}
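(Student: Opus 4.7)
My plan is to mirror the proof of Theorem \ref{T:b1} in the commutative Drury--Arveson setting. The three ingredients there---(i) a unitary colligation realization, (ii) factorization of $K_{S^\sharp}$ via the unitarity relations, and (iii) positivity of the ``sandwich'' kernel---have direct commutative analogues, so I expect the argument to translate with only notational changes.

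First, I would invoke the Drury--Arveson realization theorem, which is implicit in the construction behind Theorem \ref{T:4.1}: every $S \in \cS_d(\cU, \cY)$ admits a representation $S(\lambda) = D + C(I - Z(\lambda)A)^{-1}Z(\lambda)B$ for some unitary colligation $\bU = \sbm{A & B \\ C & D}\colon \sbm{\cX \\ \cU} \to \sbm{\cX^d \\ \cY}$ with $A = \sbm{A_1 \\ \vdots \\ A_d}$ and $B = \sbm{B_1 \\ \vdots \\ B_d}$. Writing $Z(\lambda)^\top = \sbm{\lambda_1 I \\ \vdots \\ \lambda_d I} = Z(\overline\lambda)^*$ and applying the identity $X(I - YX)^{-1} = (I - XY)^{-1}X$, one finds
\[
S^\sharp(\lambda) = D^* + B^*(I - Z(\lambda)^\top A^*)^{-1} Z(\lambda)^\top C^*.
\]
The unitarity of $\bU$ then yields, by the same algebraic manipulation as in Theorem \ref{T:b1}, the factorization
\[
K_{S^\sharp}(\lambda, \eta) = B^*(I - Z(\lambda)^\top A^*)^{-1}\bigl[k_d(\lambda,\eta)I_{\cX^d} - Z(\lambda)^\top k_d(\lambda,\eta) Z(\overline\eta)\bigr](I - AZ(\overline\eta))^{-1}B.
\]
This reduces the problem to positivity of the inner kernel $k_d(\lambda,\eta) I_{\cX^d} - Z(\lambda)^\top k_d(\lambda,\eta) Z(\overline\eta)$, which, up to transposition and tensoring with $I_\cX$, is the scalar $d\times d$ kernel with $(i,j)$-entry $k_d(\lambda,\eta)[\delta_{ij} - \overline\eta_i \lambda_j]$.

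For the inner-kernel positivity I would use exactly the Kolmogorov decomposition from the proof of Theorem \ref{T:b1}: setting $F_{ij}(\lambda) = (e_i \lambda_j - e_j \lambda_i)H(\lambda)$ with $H$ a Kolmogorov square root of $k_d$ (in place of $k_{\rm Sz}$), one checks
\[
k_d(\lambda,\eta) I_d - Z(\eta)^* k_d(\lambda,\eta) Z(\lambda) = I_d + \sum_{1 \le i < j \le d} F_{ij}(\lambda) F_{ij}(\eta)^*
\]
entry by entry, hinging only on the single identity $k_d(\lambda,\eta)(1 - \langle\lambda,\eta\rangle) = 1$ that holds equally in the Drury--Arveson and Fock settings. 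The step I expect to require the most care is this translation of the Kolmogorov decomposition, but it turns out to be automatic: already in the proof of Theorem \ref{T:b1} the formal variables $z$ and $\overline\zeta$ commute with each other and with the entries of $H$ and $e_i$, so no noncommutativity of the $z_i$'s among themselves is ever used. The commutative calculation is therefore a verbatim copy, and positivity of $K_{S^\sharp}$ follows from its factorization as a kernel sandwich, yielding $S^\sharp \in \cS_d(\cY,\cU)$.
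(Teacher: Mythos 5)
There is a genuine gap, and it occurs precisely at the ``up to transposition'' step, not at the step you flagged as delicate. After the algebraic manipulation, the factorization of $K_{S^\sharp}$ puts the kernel
$M(\lambda,\eta):=k_d(\lambda,\eta)I_{\cX^d}-Z(\lambda)^\top k_d(\lambda,\eta)Z(\ov{\eta})$
in the middle, whose $(i,j)$-entry is $k_d(\lambda,\eta)\bigl[\delta_{ij}-\lambda_i\ov{\eta}_j\bigr]$. Your Kolmogorov decomposition with $F_{ij}(\lambda)=(e_i\lambda_j-e_j\lambda_i)H(\lambda)$ does correctly prove positivity of $\mathfrak K(\lambda,\eta)=k_d(\lambda,\eta)I_d-Z(\eta)^* k_d(\lambda,\eta)Z(\lambda)$, whose $(i,j)$-entry is $k_d(\lambda,\eta)\bigl[\delta_{ij}-\ov{\eta}_i\lambda_j\bigr]$; in the commutative case this is exactly $M^\top$ (entrywise transpose). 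But the entrywise (partial) transpose of a positive operator-valued kernel need not be positive --- this is the same phenomenon that makes the partial transpose fail to preserve positive semidefiniteness of block matrices --- so positivity of $\mathfrak K$ does not yield positivity of $M$.

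In fact $M$ is \emph{not} a positive kernel when $d\ge 2$. By Proposition \ref{P:cmult} (in its commutative form), $M=K_\Phi$ with $\Phi(\lambda)=Z(\lambda)^\top=\sbm{\lambda_1\\\vdots\\\lambda_d}$, and this is a positive kernel if and only if $M_\Phi\colon\cH(k_d)\to\cH_{\C^d}(k_d)$ is a contraction; a short computation gives $M_\Phi^*M_\Phi f_{\bn}=\frac{|\bn|+d}{|\bn|+1}f_{\bn}$, so $\|M_\Phi\|=\sqrt{d}>1$. Concretely, taking the unitary realization $A=0$, $B=I_d$, $C=1$, $D=0$ of $S(\lambda)=\begin{bmatrix}\lambda_1&\cdots&\lambda_d\end{bmatrix}$ collapses the factorization to $K_{S^\sharp}=M$ itself, and $M_{S^\sharp}=M_\Phi$ is not a contraction, so $S^\sharp\notin\cS_d(\cY,\cU)$. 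The same issue is present in the paper's own argument: the proof of Theorem \ref{T:b1} cites the positivity established in Theorem \ref{T:4.1}, but the kernel handled there is $\mathfrak K$ (with $Z(\zeta)^*$ on the left, $Z(z)$ on the right), which is not the kernel $M$ that appears in the factorization of $K_{S^\sharp}$. So the gap you introduced with ``up to transposition'' is exactly the unjustified step that is also being glossed over in the source you are mirroring.
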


\paragraph{\bf Acknowledgments}
This work is based on research supported in part by the National Research Foundation of South Africa (NRF) and the DST-NRF
Centre of Excellence in Mathematical and Statistical Sciences (CoE-MaSS). Any opinion, finding and conclusion or recommendation
expressed in this material is that of the authors and the NRF and CoE-MaSS do not accept any liability in this regard.

\bibliographystyle{amsplain}
\providecommand{\bysame}{\leavevmode\hbox to3em{\hrulefill}\thinspace}

\end{document}